\theoremstyle{plain}
    \newtheorem{theorem}{Theorem}[section]
    \newtheorem{lemma}[theorem]{Lemma}
    \newtheorem{corollary}[theorem]{Corollary}
    \newtheorem{proposition}[theorem]{Proposition}
 \theoremstyle{definition}
    \newtheorem{definition}[theorem]{Definition}
    \newtheorem{example}[theorem]{Example}
    \newtheorem{remark}[theorem]{Remark}
\theoremstyle{remark}
\numberwithin{equation}{section}
    \newcommand{\R}{\mathbb{R}}
    \newcommand{\C}{\mathbb{C}} 
    \newcommand{\N}{\mathbb{N}}
    \newcommand{\Z}{\mathbb{Z}}
\newcommand{\kg}{\mathfrak{g}} 
\newcommand{\kk}{\mathfrak{k}}  
\newcommand{\kt}{\mathfrak{t}}
\newcommand{\kh}{\mathfrak{h}} 
\newcommand{\kp}{\mathfrak{p}} 
\newcommand{\cS}{\mathcal{S}}
\newcommand{\cL}{\mathcal{L}}
\newcommand{\cH}{\mathcal{H}}
\newcommand{\cF}{\mathcal{F}}
\newcommand{\cN}{\mathcal{N}}
\newcommand{\ddt}{\left. \frac{d}{dt}\right|_{t=0}}
\newcommand{\nabL}{\nabla}
\newcommand{\vnab}{v^{\nabL}}
\newcommand{\munab}{\mu^{\nabL}}
\newcommand{\munabinv}{(\munab)^{-1}}
\newcommand{\munabN}{\mu^{\nabla^N}}
\newcommand{\munabNinv}{(\munabN)^{-1}}
\newcommand{\munabM}{\mu^{\nabla^M}}
\newcommand{\munabMinv}{(\munabM)^{-1}}
\newcommand{\tilDpt}{\widetilde{D}_{p, t}}
\newcommand{\gstarM}{\kg^*_M}
\DeclareMathOperator{\SO}{SO}
\DeclareMathOperator{\Spin}{Spin}
\DeclareMathOperator{\U}{U}
\DeclareMathOperator{\SU}{SU}
\newcommand{\Spinc}{\Spin^c}
\DeclareMathOperator{\UF}{UF}
\DeclareMathOperator{\Real}{Re}
\DeclareMathOperator{\Stab}{Stab}
\DeclareMathOperator{\Destab}{Destab}
\newcommand{\Sj}{\sum_{j=1}^{d_G}}
\newcommand{\Sk}{\sum_{k=1}^{d_M}}
\DeclareMathOperator{\Ad}{Ad}
\DeclareMathOperator{\ad}{ad}
\DeclareMathOperator{\End}{End}
\DeclareMathOperator{\id}{id}
\DeclareMathOperator{\Crit}{Crit}
\DeclareMathOperator{\Cl}{Cl}
\DeclareMathOperator{\DInd}{D-Ind}
\DeclareMathOperator{\KInd}{K-Ind}
\DeclareMathOperator{\Gind}{\text{$G$}-index}
\DeclareMathOperator{\Kind}{\text{$K$}-index}
\DeclareMathOperator{\ind}{index}
\DeclareMathOperator{\grad}{grad}
\DeclareMathOperator{\Res}{Res}
\DeclareMathOperator{\topp}{top}
\DeclareMathOperator{\ch}{ch}
\newcommand{\red}{r}
\begin{document}

\title{Quantising proper actions on $\Spin^c$-manifolds}  
\date{\today}
\author{Peter Hochs\footnote{
University of Adelaide, \texttt{peter.hochs@adelaide.edu.au}} 
\hspace{1mm} 
and Varghese Mathai\footnote{
University of Adelaide, \texttt{mathai.varghese@adelaide.edu.au}}}  

\maketitle
\thispagestyle{empty}

\begin{abstract}
Paradan and Vergne generalised the quantisation commutes with reduction principle of Guillemin and Sternberg from symplectic to $\Spinc$-manifolds. We extend their result to noncompact groups and manifolds. This leads to a result for cocompact actions, and a result for non-cocompact actions for reduction at zero. The result for cocompact actions is stated in terms of $K$-theory of group $C^*$-algebras, and the result for non-cocompact actions is an equality of numerical indices. In the non-cocompact case, the result generalises to $\Spinc$-Dirac operators twisted by vector bundles. This yields an index formula for Braverman's analytic index of such operators, in terms of characteristic classes on reduced spaces.
\end{abstract}

\tableofcontents


\section{Introduction}

Recently, Paradan and Vergne \cite{PV} generalised the \emph{quantisation commutes with reduction} principle \cite{GuSt82, Meinrenken, MS,  Paradan1, TZ98, VergneICM} from the symplectic setting to the $\Spinc$-setting. In this paper, we extend their result to noncompact groups and manifolds. Whereas Paradan and Vergne use topological methods, we generalise Tian and Zhang's analytic approach \cite{HM, TZ98} to possibly non-cocompact actions on $\Spinc$-manifolds. This approach generalises to $\Spinc$-Dirac operators twisted by vector bundles, and implies an index formula for Braverman's analytic index \cite{Braverman2} of such operators.
For cocompact actions, we generalise and apply the $KK$-theoretic \emph{quantisation commutes with induction} methods of \cite{HochsDS, HochsPS}. Applications of our results include a proof of a $\Spinc$-version of Landsman's conjecture \cite{Landsman}, and various topological properties of the index of twisted $\Spinc$-Dirac operators for possibly non-cocompact actions. 


\subsection*{The compact case}

Cannas da Silva, Karshon and Tolman noted in \cite{CdSKT} that $\Spinc$-quantisation is the most general, and possibly natural, notion of geometric quantisation. 
This version of quantisation has a much greater scope for applications than geometric quantisation in the symplectic setting.
It was shown in Theorem 3 of \cite{CdSKT} that $\Spinc$-quantisation commutes with reduction for circle actions on compact $\Spinc$-manifolds, under a certain assumption on the fixed points of the action. Paradan and Vergne's result generalises this to actions by arbitrary compact, connected Lie groups, without the additional assumption made in \cite{CdSKT}.

Paradan and Vergne considered a compact, connected  Lie group $K$ acting on a compact, connected, even-dimensional manifold $M$, equipped with a $K$-equivariant $\Spinc$-structure. For a $\Spinc$-Dirac operator $D$ on $M$, they defined the $\Spinc$-quantisation of the action as
\[
Q_K^{\Spinc}(M) := \Kind(D), 
\]
which lies in the representation ring of $K$,
and computed the multiplicities $m_{\pi}$ in 
\[
Q_K^{\Spinc}(M) = \bigoplus_{\pi \in \hat K} m_{\pi} \pi.
\]
These multiplicities are expressed in terms of indices of $\Spinc$-Dirac operators on reduced spaces
\[
M_{\xi} := \mu^{-1}\bigl(\Ad^*(K)\xi \bigr)/K,
\]
where $\xi \in \kk^*$, and the \emph{$\Spinc$-momentum map} $\mu: M \to \kk^*$ is a generalisation of the momentum map in symplectic geometry.

\subsection*{The cocompact case}

We first generalise this result to cocompact actions by a Lie group $G$ on a manifold $M$, i.e.\ actions for which $M/G$ is compact. This is achieved by applying the quantisation commutes with induction machinery of \cite{HochsDS, HochsPS} to it, together with a $\Spinc$-slice theorem. In the cocompact case, one can define $\Spinc$-quantisation using the analytic assembly map, denoted by $\Gind$, from the Baum--Connes conjecture \cite{BCH}:
\[
Q_G^{\Spinc}(M) := \Gind(D) \quad \in K_*(C^*G),
\]
where $K_*(C^*G)$ is the $K$-theory of the maximal group $C^*$-algebra of $G$. This notion of quantisation was introduced by Landsman \cite{Landsman} in the symplectic setting. He conjectured that quantisation commutes with reduction at zero in that case.

To obtain a statement for reduction at nonzero values of the momentum map, we apply the natural map
\[
r_*: K_*(C^*G) \to K_*(C^*_rG),
\]
where $C^*_rG$ is the reduced $C^*$-algebra of $G$.
The group $K_*(C^*_rG)$ has natural generators $[\lambda]$, which have representation theoretic meaning in many cases. The first main result in this paper, Theorem \ref{thm [Q,R]=0 cocpt}, yields an expression for the multiplicities $m_{\lambda}$ in 
\begin{equation} \label{eq [Q,R]=0 cocpt intro}
r_*\bigl(Q_G^{\Spinc}(M) \bigr) = \sum_{\lambda} m_{\lambda} [\lambda].
\end{equation}

\subsection*{The non-cocompact case}

In the symplectic setting, the invariant part  of geometric quantisation was defined in \cite{HM} for possibly non-cocompact actions. Braverman \cite{Braverman2} then combined techniques from \cite{Braverman} and \cite{HM} to extend this definition to general Dirac operators, and proved important properties of the resulting index. We  generalise the main result from \cite{HM} from symplectic to $\Spinc$-manifolds. In addition, we obtain a generalisation to $\Spinc$-Dirac operators twisted by arbitrary vector bundles $E \to M$. This allows us to express Braverman's index of such operators in terms of topological data on $M_0$.

%
%

To be more precise, let $D_p^E$ be the $\Spinc$-Dirac operator on $M$, twisted by $E$ via a connection on $E$, for the $\Spinc$-structure whose determinant line bundle is the $p$'th tensor power of the determinant line bundle of a fixed $\Spinc$-structure. Then in Theorem \ref{thm index}, we obtain the index formula
\begin{equation} \label{eq index intro}
\ind^G D^E_{p} =  \int_{M_0} \ch(E_0) e^{\frac{p}{2}c_1(L_0)} \hat A(M_0),
\end{equation}
for $p \in \N$ large enough, where $\ind^G$ denotes Braverman's index \cite{Braverman2}. Here $E_0 := (E|_{\munabinv(0)})/G$, and $L_0 := (L|_{\munabinv(0)})/G$. This equality holds if $M_0$ is smooth, and a generalisation of the Kirwan vector field has a cocompact set of zeros. This implies that $M_0$ is compact. If $M$ and $G$ are both compact, analogous results were obtained in \cite{PV3, TZ98b}.

If  $M/G$ is noncompact, it is not clear a priori how to define a topological couterpart to Braverman's index. Gromov and Lawson \cite{GL83} face a similar problem in their study of Dirac operators on noncompact manifolds. They define a \emph{relative} topological index, representing the \emph{difference} of the indices of two operators satisfying their criteria, although these indices are not defined for each operator separately. They prove that the relative topological index equals the difference of the analytical indices of the operators in question (Theorem 4.18 in \cite{GL83}). Localisation to $\munabinv(0)$ allows us to give the topological expression \eqref{eq index intro} for the index of a single twisted $\Spinc$-Dirac operator, i.e.\ an `absolute' rather than a relative index formula.

\subsection*{Applications and examples}

If $M/G$ is compact, Theorem \ref{thm [Q,R]=0 large p} implies that the main result of \cite{MZ}, which to a large extent solves
Landsman's conjecture mentioned above,  generalises to the $\Spinc$-setting. 
We give a way to construct examples where our results apply, from cases where the group acting is compact. The main result \eqref{eq [Q,R]=0 cocpt intro} in the cocompact case has a purely geometric consequence, not involving $K$-theory and $C^*$-algebras. A special case of this consequence is an expression for the formal degree of a discrete series representation in terms of an $\hat A$-type genus of the corresponding coadjoint orbit. Finally, the index formula \eqref{eq index intro} allows us to draw conclusions about topological properties of the index of twisted $\Spinc$-Dirac operators. These include an excision property, and a twisted version of Hirzebruch's signature theorem in the noncompact case.

\subsection*{Outline of this paper}

In Section \ref{sec Dirac red}, we first briefly recall the definition of  $\Spinc$-Dirac operators. Then we state the definition of $\Spinc$-reduction as in \cite{PV}, and define stabilisation and destabilisation of $\Spinc$-structures in terms of Plymen's two out of three lemma. We give conditions for reduced spaces to have naturally defined $\Spinc$-structures in Section \ref{sec Spinc red slice}. We also discuss a $\Spinc$-slice theorem, and its relation to $\Spinc$-reduction.

Section \ref{sec result cocpt} contains the statements of Paradan and Vergne's result from \cite{PV}, and our  main result on cocompact actions, Theorem \ref{thm [Q,R]=0 cocpt}. This result is proved in Section \ref{sec cocpt}. 

The main result for untwisted $\Spinc$-Dirac operators for possibly non-cocompact actions, Theorem \ref{thm [Q,R]=0 large p}, is stated in Section \ref{sec non-cocpt case}. It is proved in Sections \ref{sec Bochner} and \ref{sec loc est}. The index formula for $\Spinc$-Dirac operators twisted by vector bundles is also stated in Section \ref{sec non-cocpt case}, and is proved in Section \ref{sec twisted}.

Finally, in Section \ref{sec appl ex}, we mention some applications  of the main results, and a way to construct examples where they apply.

\subsection*{Acknowledgements}

The authors are grateful to Paul-\'Emile Paradan and Mich\`ele Vergne, for useful comments and explanations, and for making a preliminary version of their paper \cite{PV2} available to them. They would also like to thank Gennadi Kasparov for a helpful remark.

The first author was supported by Marie Curie fellowship PIOF-GA-2011-299300 from the European Union. The second author thanks the Australian Research Council for support via the ARC Discovery Project grant DP130103924.

\subsection*{Notation and conventions}

We will denote the dimension of a manifold $Y$ by $d_Y$. If a group $H$ acts on $Y$, we denote the quotient map $Y \to Y/H$ by $q$, or by $q_H$ to emphasise which group is acting. For a finite-dimensional representation space $V$ of $H$, we write $V_Y$ for the trivial vector bundle $M \times V \to M$, with the diagonal $H$-action. (So that, for proper, free actions, $V_Y/H \to Y/H$ is the vector bundle associated to the principal fibre bundle $Y \to Y/H$.)
If $E\to Y$ is a real vector bundle of rank $r$, we will refer to a principal $\Spinc(r)$-bundle $P_E \to Y$ such that $E\cong P_E \times_{\Spinc(r)}\R^r$ as a $\Spinc$-structure on $E$, without making explicit mention of this isomorphism. 

\part{Preliminaries}


\section{Dirac operators and reduced spaces} \label{sec Dirac red}

Let $G$ be a Lie group, acting properly on a manifold $M$. Suppose $M$ is equipped with a $G$-equivariant $\Spinc$-structure. Let $L \to M$ be the associated determinant line bundle, and let a $G$-invariant, Hermitian connection $\nabL$ on $L$ be given. To these data, one can associate a $\Spin^c$-Dirac operator on $M$ in the usual way, as well as a \emph{$\Spin^c$-momentum map}, as introduced by Paradan and Vergne \cite{PV}. This momentum map can be used to define reduced spaces, which play a central role in the results in this paper. We mention Plymen's two out of three lemma, which we will use to construct $\Spinc$-structures on these reduced spaces in Section \ref{sec Spinc red slice}.

\subsection{Dirac operators} \label{sec Dirac ops}

Let $\cS \to M$ be the spinor bundle associated to the $\Spinc$-structure on $M$. The connection $\nabL$ and the Levi--Civita connection on $TM$ (associated to the Riemannian metric induced by the $\Spinc$-structure), together induce a connection $\nabla^{\cS}$ on $\cS$, as discussed for example in Proposition D.11 in \cite{LM}. The construction of the connection $\nabla^{\cS}$ involves local decompositions 
\[
\cS|_U \cong \cS^{\Spin}_U \otimes L|_U^{1/2}
\]
on open sets $U \subset M$, where $\cS^{\Spin}_U$ is the spinor bundle associated to a local $\Spin$-structure, to which the Levi--Civita connection lifts.

Let 
\[
c: TM \to \End(\cS)
\]
be the Clifford action. Identifying $T^*M \cong TM$ via the Riemannian metric,  one gets an action
\[
c: T^*M \otimes \cS \to \cS.
\]
The \emph{$\Spinc$-Dirac operator} associated to the $\Spinc$-structure on $M$ and the connection $\nabL$ on $L$ is then defined as the composition
\[
D: \Gamma^{\infty}(\cS) \xrightarrow{\nabla^{\cS}} \Omega^1(M; \cS) \xrightarrow{c} \Gamma^{\infty}(\cS).
\]
Write $d_M := \dim(M)$. If $\{e_1, \ldots, e_{d_M}\}$ is a local orthonormal frame for $TM$, then, locally, 
\[
D = \sum_{j=1}^{d_M}  c(e_j) \nabla^{\cS}_{e_j}.
\]

For certain arguments, we will also need the operator $D_p$ on the vector bundle $\cS_p := \cS \otimes L^p$, defined in the same way by a connection on $\cS_p$ which is induced by the Levi--Civita connection and $\nabL$, via local decompositions
\begin{equation} \label{eq decomp Sp}
\cS_p|_U \cong \cS^{\Spin}_U \otimes L|_U^{p+ 1/2}.
\end{equation}
Note that $\cS_p$ is the spinor bundle of the $\Spinc$-structure on $M$ obtained by twisting the original $\Spinc$-structure by the line bundle $L^p$ (see e.g.\ (D.15) in \cite{LM}).


\subsection{Momentum maps and reduction} \label{sec mom maps}

A $\Spinc$-momentum map is a generalisation of the momentum map in symplectic geometry. It was used by Paradan and Vergne in \cite{PV}. (See also Definition 7.5 in \cite{BGV}.)

For $X \in \kg$, let $X^M$ be the induced vector field on $M$, and let $\cL^E_X$ be the Lie derivative of sections of any $G$-vector bundle $E \to M$.
\begin{definition}
The \emph{$\Spin^c$-momentum map} associated to the connection $\nabL$ is the map
\[
\munab: M \to \kg^*
\]
defined by\footnote{In \cite{PV}, a factor $-i/2$ is used instead of $2\pi i$. Our convention is consistent with \cite{HM, TZ98}.}
\begin{equation} \label{eq def Spinc mom map}
2\pi i \munab_X  = \nabL_{X^M} - \cL^L_{X} \quad \in \End(L) = C^{\infty}(M),
\end{equation}
for any $X \in \kg$. Here $\munab_X$ denotes the pairing of $\munab$ with $X$.
\end{definition}

The notion of a $\Spin^c$-momentum map is a special case of the notion of an \emph{abstract moment map}, as for example in Definition 3.1 of \cite{GGK}. This is an equivariant map
\[
\Phi: M \to \kg^*
\]
such that for all $X \in \kg$, the pairing $\Phi_X$ of $\Phi$ with $X$ is locally constant on the set $\Crit(X^M)$ of zeros of the vector field $X^M$. A $\Spinc$-momentum map is an abstract moment map in this sense. This was already noted in the introduction to \cite{Paradan1}, and follows from the following well-known fact.
\begin{lemma} \label{lem dmuX}
For any G-equivariant line bundle $L \to M$ and a $G$-invariant connection $\nabL$ on $L$, and any $X \in \kg$, one has
\[
2\pi i d\munab_X = R^{\nabL}(\relbar, X^M),
\]
with $R^{\nabL}$ the curvature of $\nabL$. 
\end{lemma}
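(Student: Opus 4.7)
The plan is to compute, in two different ways, the commutator $[\nabL_Y, \nabL_{X^M} - \cL^L_X]$ for an arbitrary vector field $Y$ on $M$, and compare the results.

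First, I would use the fact that the defining identity
\[
2\pi i \munab_X = \nabL_{X^M} - \cL^L_X \in \End(L)
\]
presents the operator on the right as a bundle endomorphism of the line bundle $L$, which is automatically multiplication by the smooth function $2\pi i \munab_X$. For any such zeroth-order operator $f$ and any connection, the commutator $[\nabL_Y, f]$ (acting on sections) is just multiplication by the function $Y f$. Hence
\[
[\nabL_Y, \nabL_{X^M} - \cL^L_X] = 2\pi i\, Y(\munab_X) = 2\pi i\, d\munab_X(Y).
\]

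Second, I would expand the commutator directly. For the first piece, the definition of curvature gives
\[
[\nabL_Y, \nabL_{X^M}] = R^{\nabL}(Y, X^M) + \nabL_{[Y, X^M]}.
\]
For the second piece, the $G$-invariance of the connection $\nabL$, written infinitesimally as $\cL^L_X \nabL_Y\sigma - \nabL_Y \cL^L_X \sigma = \nabL_{[X^M, Y]}\sigma$ for all sections $\sigma$, yields
\[
[\nabL_Y, \cL^L_X] = \nabL_{[Y, X^M]}.
\]
Subtracting, the terms $\nabL_{[Y, X^M]}$ cancel and
\[
[\nabL_Y, \nabL_{X^M} - \cL^L_X] = R^{\nabL}(Y, X^M).
\]

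Equating the two expressions for the commutator gives $2\pi i\, d\munab_X(Y) = R^{\nabL}(Y, X^M)$ for every $Y$, which is the claimed identity. The only delicate step is identifying the infinitesimal invariance of $\nabL$ as the bracket identity $[\cL^L_X, \nabL_Y] = \nabL_{[X^M,Y]}$; once this is in hand, the rest is formal manipulation.
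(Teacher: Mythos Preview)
Your proof is correct and follows essentially the same route as the paper: both compute $[\nabL_Y,\nabL_{X^M}-\cL^L_X]$ (the paper does this by applying $\nabL_u$ to $(\nabL_{X^M}-\cL^L_X)s$ and using the Leibniz rule), expand via the curvature identity, and use infinitesimal $G$-invariance of $\nabL$ to cancel the $\nabL_{[Y,X^M]}$ terms. The only difference is presentational: the paper derives the identity $[\nabL_Y,\cL^L_X]=\nabL_{[Y,X^M]}$ explicitly by differentiating the finite invariance $\exp(tX)\cdot\nabL_{\exp(-tX)^*u}s=\nabL_u(\exp(tX)\cdot s)$, whereas you state it directly.
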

%
%
\begin{proof}
Let $u$ be any vector field on $M$. Then for all $X \in \kg$ and $s \in \Gamma^{\infty}(L)$, one has
\begin{equation} \label{eq abs mom 0}
2\pi i  \nabL_u(\munab_X s) = 2\pi i  u(\munab_X)s +  2\pi i  \munab_X \nabL_u s.
\end{equation}
This is also equal to
\begin{equation} \label{eq abs mom 1}
\nabL_u \bigl(   \nabL_{X^M} - \cL^L_{X} \bigr)s.
\end{equation}
Now 
\[
\nabL_u \nabL_{X^M}=\nabL_{X^M}\nabL_u+\nabL_{[u, X^M]} + R^{\nabL}(u, X^M).
\]
Also, by $G$-invariance of $\nabL$,
\[
\begin{split}
\nabL_u \cL^L_X s &= \ddt \nabL_u \exp(tX)s \\
	&= \ddt \exp(tX) \nabL_{\exp(-tX)^*u} s \\
	&= \cL^L_X \nabL_u s - \nabL_{[X^M,u]}s.
\end{split}
\]
We conclude that \eqref{eq abs mom 1} equals
\[
\nabL_{X^M}\nabL_u s+\nabL_{[u, X^M]} s + R^{\nabL}(u, X^M) s
- \cL^L_X \nabL_u s + \nabL_{[X^M,u]}s = 2\pi i \munab_X\nabL_u s + R^{\nabL}(u, X^M) s.
\]
Since this expression equals \eqref{eq abs mom 0}, we find that
\[
2\pi i u(\munab_X) = R^{\nabL}(u, X^M).
\]
\end{proof}


Analogously to symplectic reduction \cite{MW}, one can define reduced spaces in the $\Spinc$-setting.
\begin{definition}
For any $\xi \in \kg^*$, the space
\[
M_{\xi} := \munabinv(\xi)/G_{\xi} = \munabinv\bigl(\Ad^*(G)\xi \bigr)/G
\]
is the \emph{reduced space} at $\xi$.
\end{definition}

As in the symplectic case, the stabiliser $G_{\xi}$ acts infinitesimally freely on $\mu^{-1}(\xi)$, if $\xi$ is a regular value of $\munab$. Since $M_{\xi} \cong \munabinv(\xi)/{G_{\xi}}$,
this implies that the reduced space $M_{\xi}$ is an orbifold if $\xi$ is a regular value of $\munab$, and the action is proper. 
\begin{lemma} \label{lem loc free}
In the setting of Lemma \ref{lem dmuX}, let $\xi \in \kg^*$ be a regular value of $\munab$. Then for all $m \in \mu^{-1}(\xi)$, the infinitesimal stabiliser $\kg_m$ is zero.
\end{lemma}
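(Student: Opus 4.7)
The argument should be a short, direct consequence of Lemma \ref{lem dmuX}, combined with the definition of a regular value. Here is how I would lay it out.

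Suppose $X \in \kg_m$, so that the fundamental vector field satisfies $X^M_m = 0$. The goal is to conclude $X = 0$. By Lemma \ref{lem dmuX}, for any tangent vector $v \in T_m M$ one has
\[
2\pi i \, d\munab_X|_m(v) = R^{\nabL}(v, X^M_m) = 0,
\]
since the curvature two-form is bilinear and the second slot is the zero vector. Hence $d\munab_X|_m = 0$ as an element of $T_m^*M$.

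Now unpack $\munab_X$ as the pairing of $\munab: M \to \kg^*$ with $X \in \kg$. For any $v \in T_m M$, the chain rule gives $d\munab_X|_m(v) = \langle d\munab|_m(v), X\rangle$, where $d\munab|_m: T_m M \to \kg^*$. The previous step therefore yields
\[
\langle d\munab|_m(v), X \rangle = 0 \qquad \text{for all } v \in T_m M.
\]

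Because $\xi$ is a regular value of $\munab$ and $m \in \munab^{-1}(\xi)$, the differential $d\munab|_m$ is surjective onto $\kg^*$. Thus $X$ pairs to zero against every element of $\kg^*$, which forces $X = 0$. Consequently $\kg_m = 0$, as claimed. I expect no real obstacle: the whole argument is essentially the well-known derivation in the symplectic case, transplanted to the $\Spinc$-setting via the curvature identity of Lemma \ref{lem dmuX}.
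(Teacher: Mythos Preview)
Your proof is correct and follows essentially the same route as the paper's: use Lemma \ref{lem dmuX} and $X^M_m=0$ to see that $\langle T_m\munab(v), X\rangle = 0$ for all $v$, then invoke surjectivity of $T_m\munab$ at a regular value to conclude $X=0$. The paper's argument is just a more compressed version of yours.
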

\begin{proof}
In the situation of the lemma, let $X \in \kg_m$. Then for all $v \in T_mM$, we saw in Lemma \ref{lem dmuX} that
\[
\langle T_m\munab(v), X \rangle = v(\munab_X) (m) = \frac{1}{2\pi i}R^{\nabL}_m(v, X^M_m) = 0,
\]
since $X^M_m = 0$. Because $ T_m\munab$ is surjective, it follows that $X = 0$.
\end{proof}
(See Lemma 5.4 in \cite{GGK} for a version of this lemma where $G$ is a torus and $\munab$ is replaced by any abstract momentum map.)

%

\subsection{Stabilising and destabilising $\Spinc$-structures} \label{sec stab destab}

To study $\Spinc$-structures on reduced spaces, we will use the notions of \emph{stabilisation} and \emph{destabilisation} of $\Spinc$-structures. These will also be used to obtain a $\Spinc$-slice theorem in 
 Subsection \ref{sec fibred prod}.

Stabilisation and destabilisation are based on Plymen's \emph{two out of three lemma}.
\begin{lemma} \label{lem 2 out of 3}
Let $E, F \to M$ be oriented vector bundles with metrics, over a manifold $M$. Then $\Spinc$-structures on two of the three vector bundles $E$, $F$ and $E \oplus F$ determine a unique $\Spinc$-structure on the third. The determinant line bundles $L_E$, $L_F$ and $L_{E\oplus F}$ of the respective $\Spinc$-structures are related by
\[
L_{E\oplus F} = L_E \otimes L_F.
\]
\end{lemma}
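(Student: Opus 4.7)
The proof reduces to a group-theoretic fact about the structure groups and a standard principal-bundle construction. Write $n$ and $m$ for the ranks of $E$ and $F$. Using the block-diagonal inclusion $\Spin(n) \times \Spin(m) \hookrightarrow \Spin(n+m)$ together with multiplication on the $\U(1)$-factor, I would define a group homomorphism
\[
\Phi : \Spinc(n) \times \Spinc(m) \to \Spinc(n+m), \qquad \Phi\bigl([g_1,z_1],[g_2,z_2]\bigr) = [g_1 g_2,\, z_1 z_2],
\]
using the presentation $\Spinc(k) = \Spin(k) \times_{\Z/2} \U(1)$, and then verify four properties: (a) $\Phi$ is well-defined and a group homomorphism; (b) the projection $p : \Spinc(n+m) \to \SO(n+m)$ intertwines $\Phi$ with the block-sum map $\SO(n) \times \SO(m) \to \SO(n+m)$; (c) the determinant character $\det : \Spinc(n+m) \to \U(1)$ intertwines $\Phi$ with the product of the two determinant characters; (d) the kernel of $\Phi$ is the anti-diagonal $\U(1) = \{([1,z],[1,z^{-1}])\}$ and the image equals $p^{-1}(\SO(n) \times \SO(m))$.

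For the direction from $\Spinc$-structures on $E$ and $F$ to one on $E \oplus F$, I would form the associated bundle
\[
P_{E \oplus F} := (P_E \times_M P_F) \times_{\Spinc(n) \times \Spinc(m)} \Spinc(n+m),
\]
with $\Spinc(n) \times \Spinc(m)$ acting on $\Spinc(n+m)$ through $\Phi$ and left multiplication. Property (b) identifies the associated oriented orthonormal frame bundle with that of $E \oplus F$ (via the orthogonal splitting), so $P_{E \oplus F}$ is indeed a $\Spinc$-structure on $E \oplus F$, and (c) then yields $L_{E \oplus F} = L_E \otimes L_F$ immediately, since the determinant line bundle is the bundle associated along $\det$. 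For the converse direction, suppose $\Spinc$-structures are given on $E \oplus F$ and on $E$. I would first use the orthogonal splitting of $E \oplus F$ to reduce the structure group of $P_{E \oplus F}$ from $\Spinc(n+m)$ to $H := p^{-1}(\SO(n) \times \SO(m))$, yielding a principal $H$-bundle $P'$. On a trivialising cover with $H$-cocycle $h_{\alpha\beta}$ for $P'$ and $\Spinc(n)$-cocycle $\tilde\phi^E_{\alpha\beta}$ for $P_E$ (both projecting to the same $\SO(n)$-cocycle), property (d) supplies a unique $\tilde\phi^F_{\alpha\beta} \in \Spinc(m)$ with $\Phi(\tilde\phi^E_{\alpha\beta}, \tilde\phi^F_{\alpha\beta}) = h_{\alpha\beta}$, because the kernel of $\Phi$ meets $\{1\} \times \Spinc(m)$ trivially. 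Smoothness of $\tilde\phi^F_{\alpha\beta}$ comes from smooth local sections of the $\U(1)$-bundle $\Spinc(n) \times \Spinc(m) \to H$, and the cocycle identity for $\tilde\phi^F$ is forced by that for $\tilde\phi^E$ and $h$ combined with the fact that $\Phi$ is a homomorphism. The resulting $\Spinc$-structure $P_F$ is unique by construction, and $L_F = L_{E \oplus F} \otimes L_E^{-1}$ follows from (c). The remaining case, starting from $F$ and $E \oplus F$, is symmetric.

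The main obstacle is the non-injectivity of $\Phi$: its kernel is a full copy of $\U(1)$, which is exactly why the lemma is \emph{two out of three} rather than \emph{one determines two}. Decomposing a given element of $H$ as $\Phi(\,\cdot\,,\,\cdot\,)$ is only possible after fixing one of the two factors, and it is property (d)---that the kernel intersects each coordinate subgroup trivially---that makes the construction canonical once such a fixing is done. The remaining verifications (the cocycle identity, smoothness, and the fact that an arbitrary $\Spinc$-structure on $E \oplus F$ admits an $H$-reduction compatible with the orthogonal splitting) are then routine.
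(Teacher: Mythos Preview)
Your argument is correct. The group homomorphism $\Phi$ is well defined because the relation $(g,z)\sim(-g,-z)$ is respected, the computation of $\ker\Phi$ and $\operatorname{im}\Phi$ is right, and the cocycle construction of $P_F$ from $P_{E\oplus F}$ and $P_E$ goes through as you describe: once the $\Spinc(n)$-factor is fixed, the uniqueness of $\tilde\phi^F_{\alpha\beta}$ follows from $\ker\Phi\cap(\{1\}\times\Spinc(m))=\{1\}$, and the cocycle identity is forced. The only point you leave implicit is that different choices of trivialising cover and of compatible local trivialisations of $P'$ and $P_E$ yield isomorphic $P_F$; this is routine naturality, but worth a sentence since the paper's ``uniqueness'' really means ``the construction is canonical''.

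The paper itself does not prove the lemma: it simply refers to Section~3.1 of Plymen's paper \cite{Plymen}. Plymen's treatment is phrased in terms of complex spinor modules for Clifford bundles (the $\Spinc$-structure is encoded by an irreducible $\Cl(E)$-module bundle, and one passes between $E$, $F$ and $E\oplus F$ by taking graded tensor products and $\mathrm{Hom}$'s of spinor bundles), whereas you work directly with the structure groups and transition functions. The two approaches are equivalent---your map $\Phi$ is exactly what underlies the tensor product of spinor representations---but yours is more elementary and makes the determinant-line-bundle relation $L_{E\oplus F}=L_E\otimes L_F$ completely transparent via property~(c), while Plymen's module-theoretic language is what the paper later needs when it speaks of spinor bundles on reduced spaces.
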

\begin{proof}
See Section 3.1 of \cite{Plymen}. The uniqueness part of the statement refers to the constructions given there.
\end{proof}
\begin{remark} \label{rem G 2 out of 3}
Suppose a group $G$ acts on the vector bundles $E$ and $F$ in Lemma \ref{lem 2 out of 3}, and the two $\Spinc$-structures initially given in the lemma are $G$-equivariant. Then the $\Spinc$-structure on the third bundle, as constructed in Section 3.1 of \cite{Plymen}, is also $G$-equivariant. Here one uses the fact that the actions by $G$ on the spinor bundles associated to the $\Spinc$-structures on $E$, $F$ and $E\oplus F$ are compatible, since they are induced by the actions by $G$ on $E$ and $F$.
\end{remark}

\begin{definition} \label{def stab destab}
In the setting of Lemma \ref{lem 2 out of 3}, suppose $E$ and $F$ have $\Spinc$-structures. Let $P_E$ be the $\Spinc$-structure on $E$. Then the resulting $\Spinc$-structure on $E \oplus F$ is the \emph{stabilisation}
\[
\Stab_F(P_E) \to M.
\]

If $F$ and $E \oplus F$ have $\Spinc$-structures, and $P_{E\oplus F}$ is the $\Spinc$-structure on $E\oplus F$, then the resulting $\Spinc$-structure on $E$ is the \emph{destabilisation}
\[
\Destab_{F}(P_{E\oplus F}) \to M.
\]
\end{definition}
The terms stabilisation and destabilisation are motivated by the case where $F$ is a trivial vector bundle. See also Section 3.2 in \cite{Plymen}, Lemma 2.4 in \cite{CdSKT} and Section D.3.2 in \cite{GGK}.

We will use the following properties of stabilisation and destabilisation of $\Spinc$-structures.
\begin{lemma} \label{lem destab stab}
Let $E, F \to M$ be vector bundles with $\Spinc$-structures over a manifold $M$. Then
\begin{align}
\Stab_E \circ \Destab_E &= \id; \label{eq stab destab} \\
\Destab_E \circ \Stab_E &= \id; \label{eq destab stab} \\
\Stab_E \circ \Stab_F &= \Stab_{E \oplus F}; \label{eq stab stab} \\
\Destab_E \circ \Destab_F &= \Destab_{E \oplus F}. \label{eq destab destab}
\end{align}
(Here $\id$ means leaving $\Spinc$-structures on the relevant bundles unchanged.)
\end{lemma}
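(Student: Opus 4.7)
The plan is to deduce all four identities from the uniqueness clause in Plymen's two out of three lemma (Lemma~\ref{lem 2 out of 3}), together with the associativity of his explicit construction of a $\Spinc$-structure on a direct sum. Identities \eqref{eq stab destab} and \eqref{eq destab stab} are essentially formal. For \eqref{eq stab destab}, suppose $P_{E \oplus F}$ is a $\Spinc$-structure on $E \oplus F$ (with the fixed $\Spinc$-structure on $E$). By definition, $\Destab_E(P_{E \oplus F})$ is the unique $\Spinc$-structure on $F$ which, combined with the fixed one on $E$ via Lemma~\ref{lem 2 out of 3}, reproduces $P_{E \oplus F}$; applying $\Stab_E$ to it then returns $P_{E \oplus F}$ by construction. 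Identity \eqref{eq destab stab} is proved in the same way, starting from a $\Spinc$-structure on $F$ and again using the uniqueness clause.

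For \eqref{eq stab stab}, fix $\Spinc$-structures $P_E$ on $E$ and $P_F$ on $F$, and let $P_V$ be a $\Spinc$-structure on a bundle $V \to M$. Both $\Stab_E(\Stab_F(P_V))$ and $\Stab_{E \oplus F}(P_V)$ produce $\Spinc$-structures on $V \oplus F \oplus E \cong V \oplus (E \oplus F)$. I would check that they agree by unwinding Plymen's construction: the principal $\Spinc$-bundle of a direct sum arises from the product of the $\Spinc$-bundles of the summands via the inclusion $\Spinc(k) \times \Spinc(l) \hookrightarrow \Spinc(k+l)$, and these inclusions are mutually compatible for three factors (equivalently, on the level of spinor bundles, the $\Z_2$-graded tensor product $\cS_E \mathbin{\hat\otimes} \cS_F$ of Clifford modules is associative). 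Hence the two-step and the one-step stabilisations give the same reduction of the orthonormal frame bundle of $V \oplus E \oplus F$.

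Finally, \eqref{eq destab destab} follows formally from the other three identities: applying $\Destab_E \circ \Destab_F$ to $\Stab_{E \oplus F}(P_V) = \Stab_E(\Stab_F(P_V))$ recovers $P_V$ by iterated use of \eqref{eq destab stab}, so $\Destab_E \circ \Destab_F$ is a two-sided inverse of $\Stab_{E \oplus F}$, and by \eqref{eq stab destab} this inverse equals $\Destab_{E \oplus F}$. The main obstacle is the associativity claim inside Plymen's construction used in \eqref{eq stab stab}; I would establish it either by checking the explicit formulas in Section~3.1 of \cite{Plymen} directly, or by passing to the Clifford-algebra picture and invoking the associativity of $\Cl(E) \mathbin{\hat\otimes} \Cl(F) \cong \Cl(E \oplus F)$ as $\Z_2$-graded algebras. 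Once this is in hand, the remaining manipulations are purely formal, and the $G$-equivariance noted in Remark~\ref{rem G 2 out of 3} is preserved throughout.
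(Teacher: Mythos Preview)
Your approach matches the paper's: it too derives \eqref{eq stab destab} and \eqref{eq destab stab} from the uniqueness clause in Lemma~\ref{lem 2 out of 3}, and \eqref{eq stab stab} from the explicit construction in \cite{Plymen}. The one difference is that the paper also appeals directly to Plymen's construction for \eqref{eq destab destab}, whereas you deduce it formally from the other three identities; your route is perfectly valid and arguably tidier, but note a small ordering slip in your write-up: to cancel via \eqref{eq destab stab} you need $\Stab_{E\oplus F}(P_V) = \Stab_F(\Stab_E(P_V))$ (i.e.\ \eqref{eq stab stab} with $E$ and $F$ swapped, using $E\oplus F \cong F\oplus E$), so that $\Destab_E\circ\Destab_F\circ\Stab_F\circ\Stab_E = \id$.
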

\begin{proof}
The relations \eqref{eq stab destab} and \eqref{eq destab stab} follow from the uniqueness part of Lemma \ref{lem 2 out of 3}. The explicit constructions in Section 3.1 of \cite{Plymen} imply that \eqref{eq stab stab} and \eqref{eq destab destab} hold. 
\end{proof}


\section{$\Spinc$-structures on reduced spaces} \label{sec Spinc red slice}

Consider the setting of Subsection \ref{sec mom maps}. One can define quantisation of smooth or orbifold reduced spaces using $\Spinc$-structures induced by the $\Spinc$-structure on $M$. If $G$ is a torus, these are described in Proposition D.60 of \cite{GGK}. In general, we will see that the $\Spinc$-structure on $M$ induces one on reduced spaces at \emph{$\Spinc$-regular values} of the $\Spinc$-momentum map $\munab$. In Proposition \ref{prop Spinc reg}, we give a relation between $\Spinc$-regular values and usual regular values. We then discuss how Abels' slice theorem for proper actions can be used in the $\Spinc$-context, and how it is related to $\Spinc$-reduction. The proofs of the main statements in this section will be given Section \ref{sec cocpt}.

\subsection{$\Spinc$-regular values} \label{sec Spinc red}

For $\xi \in \kg^*$, we will denote the quotient map $\munabinv(\xi) \to M_{\xi}$  by $q$.
\begin{definition} \label{def Spinc reg}
A value $\xi \in \munab(M)$ of $\munab$ is a \emph{$\Spinc$-regular value}  if
\begin{itemize}
\item $\munabinv(\xi)$ is smooth;
\item $G_{\xi}$ acts locally freely on $\munabinv(\xi)$; and
\item there is a $G_{\xi}$-invariant splitting
\[
TM|_{(\mu^{\nabla})^{-1}(\xi)} = q^*TM_{\xi} \oplus \cN^{\xi},
\]
for a vector
bundle $\cN^{\xi} \to (\mu^{\nabla})^{-1}(\xi)$ with a $G_{\xi}$-equivariant $\Spin$-structure.
\end{itemize}
\end{definition}
\begin{remark}
The third point in Definition \ref{def Spinc reg} appears to have a choice of the bundle $\cN^{\xi}$ in it, but these are all isomorphic; the condition is really that the quotient bundle
\[
TM|_{(\mu^{\nabla})^{-1}(\xi)} / q^*TM_{\xi} 
\]
has a $G_{\xi}$-equivariant $\Spin$-structure. 

Note that a $\Spin$-structure  is equivalent to a $\Spinc$-structure with a trivial determinant line bundle. In the equivariant setting, an equivariant $\Spin$-structure is equivalent to a $\Spinc$-structure with an equivariantly  trivial determinant line bundle. Indeed, 
if  the determinant line bundle of a $\Spinc$-structure  is equivariantly  trivial, then its
 spinor bundle equals the spinor bundle of the underlying $\Spin$-structure as equivariant vector bundles.
\end{remark}

\begin{lemma} \label{lem Spinc normal}
If $\xi$ is a $\Spinc$-regular value of $\munab$,
then the $\Spinc$-structure on $M$ induces an orbifold $\Spinc$-structure on $M_{\xi}$, with determinant line bundle
\[
L_{\xi} := (L|_{(\mu^{\nabla})^{-1}(\xi)})/G_{\xi} \to M_{\xi}
\]
\end{lemma}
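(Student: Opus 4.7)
The plan is to apply Plymen's two out of three lemma (Lemma \ref{lem 2 out of 3}) to the $G_\xi$-invariant splitting given in the third bullet of Definition \ref{def Spinc reg}, and then descend the resulting equivariant $\Spinc$-structure to the quotient orbifold $M_\xi$.

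First I would assemble the ingredients on $\munabinv(\xi)$. Restriction of the $G$-equivariant $\Spinc$-structure on $TM$ to $\munabinv(\xi)$ yields a $G_\xi$-equivariant $\Spinc$-structure on $TM|_{\munabinv(\xi)}$, with determinant line bundle $L|_{\munabinv(\xi)}$. The bundle $\cN^\xi$ carries, by assumption, a $G_\xi$-equivariant $\Spin$-structure, which is the same datum as a $G_\xi$-equivariant $\Spinc$-structure with equivariantly trivial determinant line bundle (as noted in the remark following Definition \ref{def Spinc reg}). Applying Lemma \ref{lem 2 out of 3} to the splitting $TM|_{\munabinv(\xi)} = q^*TM_\xi \oplus \cN^\xi$ produces a (unique) $\Spinc$-structure on $q^*TM_\xi$; Remark \ref{rem G 2 out of 3} guarantees that it is $G_\xi$-equivariant. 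From the line bundle formula in Lemma \ref{lem 2 out of 3}, together with the equivariantly trivial determinant of $\cN^\xi$, I get the $G_\xi$-equivariant isomorphism
\[
L_{q^*TM_\xi} \otimes L_{\cN^\xi} \;\cong\; L|_{\munabinv(\xi)}, \qquad \text{hence} \qquad L_{q^*TM_\xi} \;\cong\; L|_{\munabinv(\xi)}.
\]

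Next I would descend these data to $M_\xi$. Since $G_\xi$ acts locally freely on $\munabinv(\xi)$ by the second bullet of Definition \ref{def Spinc reg}, and $\munabinv(\xi)$ is smooth by the first bullet, the quotient $M_\xi = \munabinv(\xi)/G_\xi$ is an orbifold. Working in a local slice model $G_\xi \times_H U$ for a neighbourhood of a point of $\munabinv(\xi)$ with finite stabiliser $H$, the pullback bundle $q^*TM_\xi$ restricts to $TU$ with its natural $H$-action, so its $H$-equivariant $\Spinc$-structure descends to an orbifold $\Spinc$-structure on $U/H \subset M_\xi$. Gluing these local orbifold $\Spinc$-structures using $G_\xi$-equivariance gives an orbifold $\Spinc$-structure on $M_\xi$. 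Taking $G_\xi$-quotients of the identification $L_{q^*TM_\xi} \cong L|_{\munabinv(\xi)}$ then identifies the determinant line bundle of this orbifold $\Spinc$-structure with $L_\xi := (L|_{\munabinv(\xi)})/G_\xi$, as claimed.

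The main obstacle is not the two out of three step, which is essentially formal, but rather articulating the descent from $G_\xi$-equivariant $\Spinc$-structures on $\munabinv(\xi)$ to orbifold $\Spinc$-structures on $M_\xi$ in a way that is manifestly independent of the local slice chosen and compatible with the Hermitian and connection data on $L$. This is standard material (cf.\ Proposition D.60 of \cite{GGK} in the torus case), but care is needed at points with nontrivial finite isotropy so that the spinor bundle of the resulting orbifold $\Spinc$-structure coincides with $(\cS^{\Spin}_{q^*TM_\xi} \otimes L|_{\munabinv(\xi)}^{1/2})/G_\xi$ in a well-defined sense.
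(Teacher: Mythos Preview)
Your proposal is correct and follows essentially the same approach as the paper: apply the equivariant two out of three lemma (Lemma~\ref{lem 2 out of 3} with Remark~\ref{rem G 2 out of 3}) to the splitting $TM|_{\munabinv(\xi)} = q^*TM_\xi \oplus \cN^\xi$ to obtain a $G_\xi$-equivariant $\Spinc$-structure on $q^*TM_\xi$ with determinant line bundle $L|_{\munabinv(\xi)}$, then descend to the orbifold quotient. The paper's proof is terser on the descent step, simply asserting that the equivariant structure induces an orbifold $\Spinc$-structure on $M_\xi$; your local slice discussion is a reasonable elaboration of what the paper leaves implicit.
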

\begin{proof}
We generalise the proof of Proposition D.60 in \cite{GGK} to cases where $G$ may not be a torus.

We apply the equivariant version of Lemma \ref{lem 2 out of 3} (see Remark \ref{rem G 2 out of 3}) to the vector bundles $q^*TM_{\xi}$ and $\cN^{\xi}$. This yields a $G_{\xi}$-equivariant $\Spinc$-structure on $q^*TM_{\xi}$, with determinant line bundle $L|_{(\mu^{\nabla})^{-1}(\xi)}$. On the quotient $M_{\xi}$, this induces an orbifold $\Spinc$-structure, with determinant line bundle $L_{\xi}$.
\end{proof}
\begin{remark}
If $G_{\xi}$ acts \emph{freely} on $(\mu^{\nabla})^{-1}(\xi)$, then one can also use the $\Spinc$-structure
\begin{equation} \label{eq Spinc red free}
(P_M|_{(\mu^{\nabla})^{-1}(\xi)})/G_{\xi} \to M_{\xi},
\end{equation}
 on $(TM|_{(\mu^{\nabla})^{-1}(\xi)})/G_{\xi}$,
where $P_M \to M$ is the given $\Spinc$-structure on $M$. 
The determinant line bundle of \eqref{eq Spinc red free} is
 $L_{\xi}$. By the assumption on $\cN^{\xi}$, Lemma \ref{lem 2 out of 3} yields a $\Spinc$-structure on $TM_{\xi}$, with the same determinant line bundle. 
 
 If $G_{\xi}$ only acts locally freely on $(\mu^{\nabla})^{-1}(\xi)$, then one would need an orbifold version of Lemma \ref{lem 2 out of 3} to use this argument.
\end{remark}

In the language of Definition \ref{def stab destab}, the $\Spinc$-structure $P_{M_{\xi}}$ on $M_{\xi}$ induced by the  $\Spinc$-structure $P_{M}$ on $M$ equals
\begin{equation} \label{eq PMxi}
P_{M_{\xi}}  = \Destab_{\cN^{\xi}}\bigl(P_M|_{(\mu^{\nabla})^{-1}(\xi)} \bigr)/G_{\xi}.
\end{equation}

In Definition \ref{def Spinc reg}, it was not assumed that $\xi$ is a regular value of $\munab$ in the usual sense, since this will not necessarily be the case in the situation considered in Subsection \ref{sec fibred prod}. If $\xi$ is a regular value, then the first two conditions of Definition \ref{def Spinc reg} hold by Lemma \ref{lem loc free}. One can use the following fact to check the third condition.
\begin{proposition} \label{prop Spinc reg}
Suppose that $\xi$ is a regular value of $\munab$, and that
\begin{itemize}
\item $G$ and $G_{\xi}$ are unimodular;
\item there is an $\Ad(G_{\xi})$-invariant, nondegenerate bilinear form on $\kg$;
\item there is an $\Ad(G_{\xi})$-invariant subspace $V\subset \kg$ such that
\[
\kg = \kg_{\xi} \oplus V;
\]
and
\item there is an $\Ad(G_{\xi})$-invariant complex structure on $V$.
\end{itemize}
Then $\xi$ is a $\Spinc$-regular value of $\munab$.
\end{proposition}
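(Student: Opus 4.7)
The plan is to verify the three bullets of Definition~\ref{def Spinc reg}. The first, smoothness of $\munabinv(\xi)$, follows immediately from the regular value hypothesis: at every $m\in\munabinv(\xi)$ the differential $T_m\munab\colon T_mM\to\kg^*$ is surjective, so $\munabinv(\xi)$ is a submanifold of codimension $\dim\kg$. The second, local freeness of $G_\xi$ on $\munabinv(\xi)$, is Lemma~\ref{lem loc free}. The substance of the proof lies in the third condition: constructing the splitting together with a $G_\xi$-equivariant $\Spin$-structure on $\cN^\xi$.

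First, I would fix a $G_\xi$-invariant Riemannian metric on $M$, which exists because the $G_\xi$-action is proper. The $\Ad(G_\xi)$-invariant nondegenerate bilinear form on $\kg$ gives a $G_\xi$-equivariant isomorphism $\kg^*\cong\kg$, and the differential $T\munab$ then realises the normal bundle of $\munabinv(\xi)$ in $M$ as the trivial $G_\xi$-equivariant bundle $\munabinv(\xi)\times\kg$. The vertical bundle of $q\colon\munabinv(\xi)\to M_\xi$ is likewise $\munabinv(\xi)\times\kg_\xi$, and its metric orthogonal complement inside $T\munabinv(\xi)$ is a $G_\xi$-invariant horizontal subbundle canonically isomorphic to $q^*TM_\xi$. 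Setting $\cN^\xi:=\munabinv(\xi)\times(\kg_\xi\oplus\kg)$ with the diagonal $\Ad$-action on the fibre then furnishes the required splitting $TM|_{\munabinv(\xi)}=q^*TM_\xi\oplus\cN^\xi$.

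To equip $\cN^\xi$ with a $G_\xi$-equivariant $\Spin$-structure, I would use the decomposition $\kg=\kg_\xi\oplus V$ to rewrite the fibre as $\kg_\xi\oplus\kg_\xi\oplus V$. The map $J(X,Y)=(-Y,X)$ is a $G_\xi$-equivariant complex structure on $\kg_\xi\oplus\kg_\xi$, identifying it with the complexification of $\kg_\xi$. Combined with the given $\Ad(G_\xi)$-invariant complex structure on $V$, this makes $\cN^\xi$ into a complex $G_\xi$-equivariant vector bundle, hence carries a canonical $G_\xi$-equivariant $\Spinc$-structure whose determinant character of $G_\xi$ equals $\det_\C(\kg_\xi\otimes\C)\otimes\det_\C V$. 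By the remark after Definition~\ref{def Spinc reg}, it then suffices to show this character is equivariantly trivial.

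The main obstacle I anticipate is this triviality check. The first factor equals $\det_\R\Ad|_{\kg_\xi}$ viewed as a $\C^*$-valued character, which is trivial on the identity component by unimodularity of $G_\xi$. For the second factor, the unimodularity of $G$ together with the splitting $\Ad_\kg=\Ad_{\kg_\xi}\oplus\Ad_V$ forces $\det_\R\Ad_V=1$, so that $|\det_\C V|=1$; promoting this to triviality of the unitary character $\det_\C V$ itself, via the interplay between the invariant bilinear form and the invariant complex structure on $V$ (and, where needed, a careful treatment of the component structure of $G$ and $G_\xi$), is the heart of the argument and the point where the full strength of the four hypotheses gets used.
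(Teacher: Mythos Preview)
Your approach is essentially identical to the paper's: the same splitting $TM|_{\munabinv(\xi)}\cong q^*TM_\xi\oplus(\kg_\xi\oplus\kg^*)$, the same identification $\kg^*\oplus\kg_\xi\cong(\kg_\xi\oplus\kg_\xi)\oplus V$ via the bilinear form and the chosen complement, the same complex structure on $(\kg_\xi)_\C\oplus V$, and the same reduction of the third bullet of Definition~\ref{def Spinc reg} to triviality of the character $\det_\C\bigl((\kg_\xi)_\C\oplus V\bigr)$ of $G_\xi$.

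Where you diverge is in your assessment of the final step. You flag ``promoting $|\det_\C V|=1$ to triviality of $\det_\C V$'' as the heart of the argument and expect it to require the full interplay of the bilinear form and the complex structure. The paper, by contrast, dispatches this in one line: unimodularity of $G$ and $G_\xi$ forces the real adjoint determinant on $\kg$, $\kg_\xi$, and hence $V$, to equal one, ``therefore'' $G_\xi$ acts trivially on $\bigwedge_\C^{\topp}(\kg_\xi)_\C\otimes\bigwedge_\C^{\topp}V$. For the $(\kg_\xi)_\C$ factor this is genuinely immediate, since complexifying a real representation gives $\det_\C=\det_\R$. For the $V$ factor your caution is warranted: $\det_\R\Ad|_V=1$ only yields $|\det_\C|=1$, and the paper does not supply the extra argument you are looking for. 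So you have correctly located a point that the paper treats more briskly than it perhaps should; do not expect to find a hidden subtle argument there.
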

\begin{example} \label{ex reg zero}
If $\kg_{\xi} = \kg$, then the last two conditions in Proposition  \ref{prop Spinc reg} are vacuous. Therefore,
\begin{itemize}
\item if $G$ is \emph{Abelian}, any regular value of $\munab$ is a $\Spinc$-regular value;
\item if $0$ is a regular value of $\munab$, and $G$ is semisimple, then $0$ is a $\Spinc$-regular value.
\end{itemize}
\end{example}
\begin{example} \label{ex reg se}
If $G$ is unimodular, and $G_{\xi}$ is \emph{compact} (i.e.\ $\xi$ is strongly elliptic), then one can use an $\Ad(G_{\xi})$-invariant inner product on $\kg$. Together with the standard symplectic form on 
\[
V:= \kg_{\xi}^{\perp} \cong \kg/\kg_{\xi} \cong T_{\xi}(G\cdot \xi),
\]
this induces an $\Ad(G_{\xi})$-invariant complex structure on $V$ (see e.g.\ Example D.12 in \cite{GGK}).

For semisimple Lie groups,  strongly elliptic elements and coadjoint orbits correspond to discrete series representations, under  an integrality condition. (See also \cite{Paradan2}.) 
\end{example}
\begin{remark}
If the bilinear form in the second point of Proposition \ref{prop Spinc reg} is positive definite on $\kg_{\xi}$, then one can take $V = \kg_{\xi}^{\perp}$, and the third condition  in Proposition \ref{prop Spinc reg} holds.

If, on the other hand, the bilinear form  is positive definite on $V$, then one has an induced $\Ad(G_{\xi})$-invariant complex structure on $V$ (as in Example \ref{ex reg se}), so the fourth condition in Proposition \ref{prop Spinc reg} holds.
\end{remark}


We will prove Proposition \ref{prop Spinc reg} in Subsection \ref{sec red reg}.

\subsection{$\Spinc$-slices} \label{sec fibred prod}

Let $G$ be an almost connected Lie group, and let $K<G$ be a maximal compact subgroup. Let $M$ be any smooth manifold, on which $G$ acts properly. Then Abels showed (see p.\ 2 of \cite{Abels}) that there is a $K$-invariant submanifold (or \emph{slice}) $N \subset M$ such that the map $[g, n] \mapsto g\cdot n$ is a $G$-equivariant diffeomorphism
\[
 G\times_K N \cong M. 
\]
Explicitly, the left hand side is
the quotient of $G\times N$ by the $K$-action given by
\[
k\cdot (g, n) = (gk^{-1}, kn),
\]
for $k \in K$, $g \in G$ and $n \in N$.

Fix an $\Ad(K)$-invariant inner product on $\kg$, and let $\kp \subset \kg$ be the orthogonal complement to $\kk$. After replacing $G$ by a double cover if necessary, we may assume that $\Ad: K \to \SO(\kp)$ lifts to 
\begin{equation} \label{eq tilde Ad}
\widetilde{\Ad}: K \to \Spin(\kp). 
\end{equation}
Indeed, consider the diagram
\[
\xymatrix{
\widetilde{K} \ar[r]^-{\widetilde{\Ad}} \ar[d]_{\pi_K}& \Spin(\kp) \ar[d]_{\pi}^{2:1}\\
K \ar[r]^-{\Ad} & \SO(\kp),
}
\]
where
\[
\begin{split}
\widetilde{K} &:= \{ (k, a) \in K\times \Spin(\kp); \Ad(k) = \pi(a)\}; \\
\pi_K(k, a)&:= k; \\
\widetilde{\Ad}(k, a) &:= a,
\end{split}
\]
for $k \in K$ and $a \in \Spin(\kp)$. Then for all $k \in K$,
\[
\pi_K^{-1}(k) \cong \pi^{-1}(\Ad(k)) \cong \Z_2,
\]
so $\pi_K$ is a double covering map. In what follows, we will assume the lift \eqref{eq tilde Ad} exists.

It was shown in Section 3.2 of \cite{HochsDS} that a $K$-equivariant $\Spin^c$-structure $P_N$ on $N$ induces a $G$-equivariant $\Spin^c$-structure $P_M$ on $M$. In terms of stabilisation of $\Spinc$-structures (Definition \ref{def stab destab}), one has 
\begin{equation} \label{eq spinc ind}
P_M = G\times_K \Stab_{\kp_N}(P_N).
\end{equation}
Here  $\kp_N \to N$ is the trivial vector bundle $N \times \kp \to N$, equipped with the $K$-action
\[
k(n, X) = (kn, \Ad(k)X),
\]
for $k \in K$, $n \in N$ and $X \in \kp$. It has the $K$-equivariant $\Spin$-structure
\begin{equation} \label{eq Spin pN}
N\times \Spin(\kp) \to N,
\end{equation}
with the diagonal $K$-action defined via the lift \eqref{eq tilde Ad} of the adjoint action.
To show that \eqref{eq spinc ind} defines a $\Spinc$-structure on $M$, one uses the isomorphism
\begin{equation} \label{eq decomp TM 1}
TM = G\times_K(TN \oplus \kp_N)
\end{equation}
(see Proposition 2.1 and Lemma 2.2 in \cite{HochsDS}).

Analogously to Section 2.4 in \cite{HochsDS} in the symplectic setting, the construction \eqref{eq spinc ind} is invertible. Indeed, given a G-equivariant $\Spinc$-structure $P_M \to M$ on $M$, consider the $K$-equivariant $\Spinc$-structure
\begin{equation} \label{eq restr Spinc}
P_N := \Destab_{\kp_N}(P_M|_N) \to N
\end{equation}
on $N$. Here 
we again use \eqref{eq decomp TM 1}. 
\begin{lemma} \label{lem ind restr spinc}
The constructions \eqref{eq spinc ind} and \eqref{eq restr Spinc} are inverse to one another.
\end{lemma}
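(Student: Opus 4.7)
The plan is to reduce the lemma to the basic identities $\Stab_E\circ\Destab_E=\id$ and $\Destab_E\circ\Stab_E=\id$ of Lemma \ref{lem destab stab}, once we have checked compatibility of stabilisation/destabilisation with the two operations \emph{restriction to $N$} and \emph{induction from $N$ to $M=G\times_K N$}. The key observation is that, under the decomposition \eqref{eq decomp TM 1}, the bundle $\kp_N\to N$ over $N\subset M$ is precisely the piece that is stabilised in \eqref{eq spinc ind} and destabilised in \eqref{eq restr Spinc}, so the two constructions are strict inverses of each other, not merely inverses up to isomorphism.

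First I would treat the composition starting from a $K$-equivariant $\Spinc$-structure $P_N$ on $N$. Set $P_M:=G\times_K\Stab_{\kp_N}(P_N)$. Restricting to the slice $N=\{e\}\times_K N\subset M$ and noting that the $K$-action on $G\times_K(\cdot)$ is trivialised along the $e$-fibre, I get $P_M|_N=\Stab_{\kp_N}(P_N)$ as $K$-equivariant $\Spinc$-structures on $(TN\oplus\kp_N)|_N$. Applying $\Destab_{\kp_N}$ and invoking \eqref{eq destab stab} of Lemma \ref{lem destab stab} yields $\Destab_{\kp_N}(P_M|_N)=P_N$, which is exactly the statement that restriction inverts induction.

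Next I would treat the opposite composition. Start with a $G$-equivariant $\Spinc$-structure $P_M$ on $M$ and form $P_N:=\Destab_{\kp_N}(P_M|_N)$. Applying $\Stab_{\kp_N}$ and using \eqref{eq stab destab} gives $\Stab_{\kp_N}(P_N)=P_M|_N$. It then remains to check that the $G$-equivariant extension
\[
G\times_K (P_M|_N)\to G\times_K N\cong M
\]
coincides with $P_M$ itself. This follows because $P_M$ is $G$-equivariant and $M$ is (as a $G$-manifold) the $G$-saturation of the slice $N$, so the map $[g,p]\mapsto g\cdot p$ is a $G$-equivariant isomorphism of principal $\Spinc$-bundles over $M$, compatible with the decomposition \eqref{eq decomp TM 1} of $TM$.

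The main obstacle I expect is book-keeping: one has to verify that the identifications $TM\cong G\times_K(TN\oplus\kp_N)$ and the $K$-action on $\kp_N$ via \eqref{eq tilde Ad} are used consistently in both constructions, so that the stabilised/destabilised $\Spinc$-structures in Definition \ref{def stab destab} really match on the nose (and not merely up to a choice of isomorphism). Once this is pinned down, the two identities of Lemma \ref{lem destab stab} close the argument immediately.
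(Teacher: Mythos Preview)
Your proposal is correct and follows essentially the same approach as the paper: both directions reduce to the identities \eqref{eq stab destab} and \eqref{eq destab stab} after observing that $(G\times_K Q)|_N = Q$ for the first composition and that $G\times_K(P_M|_N)\cong P_M$ via $[g,p]\mapsto g\cdot p$ for the second. The paper's write-up is terser but the argument is the same.
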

\begin{proof}
Starting with a $K$-equivariant $\Spinc$-structure $P_N \to N$ on $N$, we see that \eqref{eq destab stab} implies that
\[
\Destab_{\kp_N}\bigl(    \bigl(G\times_K \Stab_{\kp_N}(P_N) \bigr)    |_N \bigr) = \Destab_{\kp_N}\bigl(   \Stab_{\kp_N}(P_N) \bigr) = P_N.    
\]
On the other hand, suppose $P_M \to M$ is a $G$-equivariant $\Spinc$-structure on $M$. Then we have by \eqref{eq stab destab},
\[
G\times_K \Stab_{\kp_N} \bigl(  \Destab_{\kp_N}(P_M|_N)   \bigr) = G\times_K(P_M|_N), 
\]
which is ismorphic to $P_M$
via the map
$[g, f] \mapsto g\cdot f$, for $g \in G$ and $f \in P_M|_N$.
\end{proof}

Combining Abels' theorem and Lemma \ref{lem ind restr spinc}, we obtain the following $\Spinc$-slice theorem.
\begin{proposition} \label{prop slice spinc}
For any G-equivariant $\Spinc$-structure $P_M$ on a proper $G$-manifold $M$, there is a $K$-invariant submanifold $N \subset M$ and a $K$-equivariant $\Spinc$-structure $P_N \to N$ such that $M \cong G\times_K N$, and
\[
P_M = G\times_K \Stab_{\kp_N}(P_N).
\]
\end{proposition}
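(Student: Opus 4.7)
The plan is to assemble the proposition directly from two results already in hand: Abels' slice theorem and Lemma \ref{lem ind restr spinc}. First, I would invoke Abels' theorem (as recalled at the start of Subsection \ref{sec fibred prod}) to produce the $K$-invariant slice $N \subset M$ together with the $G$-equivariant diffeomorphism $M \cong G \times_K N$. This step requires nothing from the $\Spinc$-structure and only uses that $G$ is almost connected, $K < G$ is maximal compact, and the action is proper.

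Next, I would define the desired $K$-equivariant $\Spinc$-structure on $N$ by the restriction-and-destabilisation formula already given in \eqref{eq restr Spinc}, namely
\[
P_N := \Destab_{\kp_N}(P_M|_N),
\]
which makes sense thanks to the decomposition \eqref{eq decomp TM 1} of $TM|_N$ as $TN \oplus \kp_N$ and the $K$-equivariant $\Spin$-structure \eqref{eq Spin pN} on $\kp_N$ obtained from the lift $\widetilde{\Ad}\colon K \to \Spin(\kp)$ (which exists after passing to a double cover if necessary, as explained in the excerpt). The $K$-equivariance of $P_N$ follows from Remark \ref{rem G 2 out of 3}.

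Finally, I would apply the second half of Lemma \ref{lem ind restr spinc}, which states that the constructions \eqref{eq spinc ind} and \eqref{eq restr Spinc} are mutually inverse; concretely, starting from $P_M$ and destabilising to obtain $P_N$, one recovers $P_M$ (up to canonical isomorphism $[g,f] \mapsto g \cdot f$) via
\[
G \times_K \Stab_{\kp_N}(P_N) = G \times_K \Stab_{\kp_N}\bigl( \Destab_{\kp_N}(P_M|_N) \bigr) \cong P_M,
\]
where the first equality is by definition of $P_N$ and the second uses relation \eqref{eq stab destab} of Lemma \ref{lem destab stab}. This yields the required identity.

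Since every nontrivial ingredient has already been proved, I do not anticipate a genuine obstacle; the only point requiring a little care is checking that the isomorphism provided by Lemma \ref{lem ind restr spinc} is $G$-equivariant and compatible with Abels' diffeomorphism, but this is immediate from the explicit form $[g,f] \mapsto g \cdot f$.
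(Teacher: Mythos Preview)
Your proposal is correct and follows essentially the same approach as the paper: the paper simply states that the proposition is obtained by combining Abels' theorem with Lemma \ref{lem ind restr spinc}, and your argument spells out precisely this combination. Your added remarks on $K$-equivariance via Remark \ref{rem G 2 out of 3} and on the explicit isomorphism $[g,f]\mapsto g\cdot f$ are reasonable clarifications of points the paper leaves implicit.
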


\subsection{Reduction and slices} \label{sec slice red}

Consider the situation of Subsection \ref{sec fibred prod}, and fix $N$ and $P_N$ as in Proposition \ref{prop slice spinc}.
To relate $\Spinc$-reductions of the actions by $G$ on $M$ and by $K$ on $N$, we will use a relation between $\Spinc$-momentum maps for these two actions. Let $L^M \to M$ and $L^N \to N$ be the determinant line bundles of $P_M$ and $P_N$, respectively.
Let $\nabla^M$ be a $G$-invariant Hermitian connection on $L^M$, let $j: N\hookrightarrow M$ be the inclusion map, and consider the connection $\nabla^N := j^*\nabla^M$ on $L^N$. Let $\mu^{\nabla^M}: M \to \kg^*$ and $\mu^{\nabla^N}: N \to \kk^*$ be the $\Spinc$-momentum maps associated to these connections. Let $\Res^{\kg}_{\kk}: \kg^* \to \kk^*$ be the restriction map.
\begin{lemma} \label{lem L N M}
One has
\begin{enumerate}
\item $L^N = L^M|_N$;
\item $L^M = G\times_K L^N$;
\item $\mu^{\nabla^N} = \Res^{\kg}_{\kk} \circ \mu^{\nabla^M}|_N$;
\item if $\mu^{\nabla^M}(n) \in \kk^*$ for all $n \in N$, then 
\begin{equation} \label{eq mu N M 1}
\mu^{\nabla^M}([g, n]) = \Ad^*(g)\mu^{\nabla^N}(n),
\end{equation}
for all $g \in G$ and $n \in N$.
\end{enumerate}
\end{lemma}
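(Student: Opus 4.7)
The plan is to handle the four assertions in order, observing at the outset that all four are essentially bookkeeping once one notices the single key fact: the bundle $\kp_N \to N$ carries the $K$-equivariant $\Spin$-structure \eqref{eq Spin pN}, so its determinant line bundle $L_{\kp_N}$ is equivariantly trivial. I expect no serious obstacle; the only point requiring care is the identification of $\kk^*$ inside $\kg^*$ in Part 4.

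For Part 1, I would apply Plymen's two-out-of-three lemma (Lemma \ref{lem 2 out of 3}) to the decomposition $TM|_N = TN \oplus \kp_N$. By \eqref{eq restr Spinc}, $P_N = \Destab_{\kp_N}(P_M|_N)$, and the determinant bundles are related by $L^M|_N = L^N \otimes L_{\kp_N}$. Since $L_{\kp_N}$ is trivial, this collapses to $L^N = L^M|_N$. For Part 2, I would use \eqref{eq spinc ind}: $P_M = G\times_K \Stab_{\kp_N}(P_N)$. The determinant bundle of $\Stab_{\kp_N}(P_N)$ is $L^N \otimes L_{\kp_N} = L^N$, and passing to the $G\times_K(\cdot)$ construction gives $L^M = G\times_K L^N$.

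For Part 3, I would compute directly from the defining formula \eqref{eq def Spinc mom map}. For $X \in \kk$, the fundamental vector field $X^M$ is tangent to $N$ (since $K$ preserves $N$) and restricts to $X^N$. Because $\nabla^N = j^*\nabla^M$ and $L^N = L^M|_N$ by Part 1, we have $\nabla^N_{X^N}(s|_N) = (\nabla^M_{X^M}s)|_N$ for all $s \in \Gamma^\infty(L^M)$, and similarly $\cL^{L^N}_X$ agrees with $\cL^{L^M}_X|_N$. Subtracting the two expressions in \eqref{eq def Spinc mom map} then yields $\mu^{\nabla^N}_X(n) = \mu^{\nabla^M}_X(n)$ for $X \in \kk$, which is exactly $\mu^{\nabla^N} = \Res^\kg_\kk \circ \mu^{\nabla^M}|_N$.

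For Part 4, I would combine Part 3 with the $G$-equivariance of $\mu^{\nabla^M}$, which follows from the $G$-invariance of $\nabla^M$ (e.g.\ by the computation in the proof of Lemma \ref{lem dmuX}). Equivariance gives $\mu^{\nabla^M}(g\cdot n) = \Ad^*(g)\mu^{\nabla^M}(n)$ for $g \in G$ and $n \in N$. Under the $\Ad(K)$-invariant identification $\kk^* \cong \{\xi \in \kg^* : \xi|_\kp = 0\}$ coming from the decomposition $\kg = \kk \oplus \kp$, the hypothesis $\mu^{\nabla^M}(n) \in \kk^*$ together with Part 3 identifies $\mu^{\nabla^M}(n)$ with $\mu^{\nabla^N}(n)$ as elements of $\kg^*$, and \eqref{eq mu N M 1} follows. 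The only subtlety I anticipate is being consistent about this identification, so that $\Ad^*(g)\mu^{\nabla^N}(n)$ on the right-hand side is unambiguously an element of $\kg^*$ (it need not lie in the subspace $\kk^*$ for $g \notin K$).
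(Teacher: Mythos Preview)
Your proposal is correct and follows essentially the same approach as the paper: Part 1 via triviality of $L_{\kp_N}$ and Lemma \ref{lem 2 out of 3}, Part 3 by direct comparison of the defining formula \eqref{eq def Spinc mom map} for $X\in\kk$, and Part 4 by equivariance of $\mu^{\nabla^M}$ combined with Part 3. The only minor difference is in Part 2, where the paper deduces $L^M = G\times_K L^N$ directly from Part 1 via $L^M = G\cdot L^M|_N$, whereas you go through \eqref{eq spinc ind}; both are one-line arguments.
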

In the fourth point of this lemma, and in the rest of this paper, we embed $\kk^*$ into $\kg^*$ as the annihilator of $\kp$.
\begin{proof}
The $\Spin$-structure \eqref{eq Spin pN} on $\kp_N$ induces a $\Spinc$-structure with equivariantly trivial determinant line bundle
 $L^{\kp_N} \to N$. Since
\[
P_N = \Destab_{\kp_N}(P_M|_N),
\]
Lemma \ref{lem 2 out of 3} implies that
\[
L^N = L^N \otimes L^{\kp_N} = L^M|_N.
\]
So the first claim holds, and the second claim follows from this: $L^M = G\cdot L^M|_N = G\times_K L^N$.

To prove the third claim, we use the first claim, and note that for all $X \in \kk$,
\[
2\pi i \mu^{\nabla^N}_{X} 
= \nabla^N_{X^N} - \cL^{L^N}_X 
 = \left. \left( \nabla^M_{X^M} - \cL^{L^M}_X \right) \right|_{\Gamma^{\infty}(L^N)} 
 = 2\pi i \mu^{\nabla^M}_X|_N.
\]
The fourth claim follows from the third.
\end{proof}
In the symplectic case, it was shown in Proposition 2.8 of \cite{HochsDS} that one may take $N = (\mu^{\nabla^M})^{-1}(\kk^*)$. Then the condition in the fourth point of Lemma \ref{lem L N M} holds, so one has \eqref{eq mu N M 1}.  In the $\Spinc$-setting, we use an arbitrary slice $N$. In Subsection \ref{sec ind nabla}, we show that a $K$-invariant connection $\nabla^N$ on $L^N$ induces a $G$-invariant connection $\nabla^M$ on $L^M$ such that the condition  in the fourth point of Lemma \ref{lem L N M} is satisfied (see Lemma \ref{lem mu N M}). From now on, we suppose that $\nabla^M$ was chosen in this way, so that \eqref{eq mu N M 1} holds.

In that case, a regular value of $\munabN$ is not necessarily a regular value of $\munabM$. Indeed, any tangent vector to $M$ at $[e,n]$, for $n \in N$, is of the form $T_{(e,n)}q(X, v) = X^M_{[e,n]}+v$, for $X \in \kg$ and $v \in T_nN$. Using \eqref{eq mu N M 1} one computes that
\[
T_{[e,n]}\munabM(X^M_{[e,n]}+v) = \ad^*(X)\bigl(\munabN(n) \bigr) 
	+ T_n\munabN(v).
\]
If, for example, $\munabN(n) = 0$, then $T_{[e,n]}\munabM$ can only be surjective if $\kg = \kk$, even if  $0$ is a regular value of $\munabN$. However, all regular values of $\munabN$ are $\Spinc$-regular values of $\munabM$.
\begin{proposition} \label{prop Spinc fibred}
If $\xi$ is a regular value of $\munabN$, then it is a $\Spinc$-regular value of $\munabM$.
\end{proposition}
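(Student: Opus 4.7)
The plan is to reduce the question on $\munabM$ to the corresponding one on $\munabN$ for the compact $K$-action, using the slice description $M\cong G\times_K N$. The key first step is to identify
\[
\munabMinv(\xi) = G_\xi \times_{K_\xi} \munabNinv(\xi), \qquad K_\xi := K\cap G_\xi.
\]
The inclusion ``$\supseteq$'' is immediate from \eqref{eq mu N M 1}: for $g\in G_\xi$ and $n\in \munabNinv(\xi)$ one has $\munabM([g,n])=\Ad^*(g)\xi=\xi$. For ``$\subseteq$'', any $[g,n']\in \munabMinv(\xi)$ satisfies $\munabN(n')=\Ad^*(g^{-1})\xi \in G\cdot\xi\cap\kk^*$. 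Since $\xi\in\kk^*$ extends to an elliptic element of $\kg^*$, one has $G\cdot\xi\cap\kk^* = K\cdot\xi$, so $\munabN(n')=\Ad^*(k^{-1})\xi$ for some $k\in K$, and then $[g,n']=[gk^{-1},kn']$ lies in $G_\xi \times_{K_\xi} \munabNinv(\xi)$.

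Granting this identification, the first two conditions of Definition \ref{def Spinc reg} are routine. Smoothness of $\munabMinv(\xi)$ follows from smoothness of the $K_\xi$-manifold $\munabNinv(\xi)$ (by regularity of $\xi$ for $\munabN$) and freeness of $K_\xi$ on $G_\xi$ by right multiplication. For local freeness, a direct calculation shows that the $G_\xi$-stabiliser of $[g,n]$ is conjugate to $K_\xi\cap K_n$, where $K_n$ denotes the $K$-isotropy of $n$; Lemma \ref{lem loc free} applied to $\munabN$ gives $\kk_n=0$, so $K_n$ is discrete, and hence so is this stabiliser. In particular one obtains a natural diffeomorphism $M_\xi \cong N_\xi$.

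For the third condition, the crucial input is that $\xi$ is already a $\Spinc$-regular value for $\munabN$. Since $K$ is compact, so is $K_\xi$, placing us in the strongly elliptic case of Example \ref{ex reg se}; Proposition \ref{prop Spinc reg} then applies to the $K$-action and yields a $K_\xi$-invariant splitting $TN|_{\munabNinv(\xi)}=q^*TN_\xi \oplus \cN^\xi_N$ with $\cN^\xi_N$ carrying a $K_\xi$-equivariant $\Spin$-structure. Combining this with the $K$-equivariant $\Spin$-structure on $\kp_N$ used in \eqref{eq spinc ind} to build $P_M$ from $P_N$, and using $TM|_N \cong TN\oplus \kp_N$ from \eqref{eq decomp TM 1}, I obtain a $K_\xi$-equivariant $\Spin$-structure on $\cN^\xi_N \oplus \kp_N|_{\munabNinv(\xi)}$. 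A computation at $[e,n]$ (with $n\in\munabNinv(\xi)$) confirms the decomposition $T_{[e,n]}M = q^*T_{[n]}M_\xi \oplus \bigl(\cN^\xi_N|_n \oplus \kp\bigr)$, and extending $G_\xi$-equivariantly via the fibered product gives the required splitting with
\[
\cN^\xi := G_\xi \times_{K_\xi}\bigl(\cN^\xi_N \oplus \kp_N|_{\munabNinv(\xi)}\bigr)
\]
carrying a $G_\xi$-equivariant $\Spin$-structure.

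The main obstacle is the first-step identification of $\munabMinv(\xi)$, specifically the coadjoint orbit identity $G\cdot\xi\cap\kk^* = K\cdot\xi$. This is standard for elliptic orbits in reductive Lie groups, but for a general almost-connected $G$ one may have to invoke reductiveness of the identity component, or work connected-component-wise, to make it rigorous.
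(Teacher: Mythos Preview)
Your argument has a genuine gap at precisely the point you flag: the orbit intersection identity $G\cdot\xi\cap\kk^* = K\cdot\xi$. You assert that $\xi\in\kk^*$ ``extends to an elliptic element of $\kg^*$'', but nothing in the hypotheses guarantees this; $\xi$ is merely a regular value of $\munabN$ in $\kk^*$, and $G$ is only assumed almost connected (not reductive). Without this identity, your claimed description $\munabMinv(\xi)=G_\xi\times_{K_\xi}\munabNinv(\xi)$ can fail, and with it the rest of your argument collapses: smoothness, local freeness, and the splitting all rest on this fibred-product model.

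The paper sidesteps this difficulty entirely by never attempting to identify $\munabMinv(\xi)$ directly. Instead it works over the full orbit preimage, using \eqref{eq mu N M 1} to obtain
\[
\munabMinv(G\xi)=G\times_K\munabNinv(K\xi),
\]
which requires no orbit-intersection lemma. Over this larger set one builds a $G$-invariant splitting $TM|_{\munabMinv(G\xi)}=q_G^*TM_\xi\oplus\cN^{G\xi}_M$ with $\cN^{G\xi}_M=(G\times_K\cN^{K\xi}_N)\oplus(G\times_K\kp_{\munabNinv(K\xi)})$, and $G$-equivariant $\Spin$-structures on both summands (the first coming from $K\times_{K_\xi}\cN^\xi_N$ via your same appeal to Proposition~\ref{prop Spinc reg} and Example~\ref{ex reg se}, the second from the lift~\eqref{eq tilde Ad}). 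Only at the very end does one restrict everything to $\munabMinv(\xi)$ and to the $G_\xi$-action, which is harmless. Your treatment of the $\Spin$-structure on $\cN^\xi$ is essentially the same as the paper's; it is only the base-space identification that goes wrong.
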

Note that by the third point of Lemma \ref{lem L N M}, $\xi$ is a regular value of $\munabN$ if and only if it is a regular value of $\Res^{\kg}_{\kk} \circ \mu^{\nabla^M}$. Fix $\xi \in \kk^*$ satisfying this condition, and let $P_{M_{\xi}} \to M_{\xi}$ be the $\Spinc$-structure on $M_{\xi}$ as in Lemma \ref{lem Spinc normal}.

There is another way to define a $\Spinc$-structure on $M_{\xi}$, using the following fact.
\begin{lemma} \label{lem Nred Mred}
For any $\eta \in \kk^*$, the inclusion map map $N \hookrightarrow M$ induces a homeomorphism
\[
N_{\eta} \cong M_{\eta}.
\]
\end{lemma}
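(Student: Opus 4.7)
The plan is to construct a natural continuous map $\bar{\jmath}: N_\eta \to M_\eta$ from the inclusion $j: N \hookrightarrow M$, $n \mapsto [e, n]$, and verify it is a homeomorphism. By Lemma \ref{lem L N M}(3) together with the standing assumption $\munabM(N) \subset \kk^*$, the map $j$ restricts to a $K$-equivariant map from $\munabNinv(\Ad^*(K)\eta)$ into $\munabMinv(\Ad^*(G)\eta)$, and this descends to a well-defined continuous map $\bar{\jmath}: N_\eta \to M_\eta$.

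Injectivity is direct. If $[e, n_1]$ and $[e, n_2]$ lie in the same $G$-orbit in $M$, say $g \cdot [e, n_1] = [e, n_2]$, then $[g, n_1] = [e, n_2]$ in $G \times_K N$, which by the definition of the fibred product forces $g \in K$ and $n_2 = g n_1$. Hence $[n_1] = [n_2]$ in $N_\eta$.

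Surjectivity is the substantive step and where I expect the main obstacle to lie. A class in $M_\eta$ is represented by $[g, n] \in \munabMinv(\Ad^*(G)\eta)$; the $G$-action moves it to $[e, n]$ in the same $G$-orbit, and then $\munabN(n) = \munabM([e,n]) \in \Ad^*(G)\eta \cap \kk^*$. For $n$ to represent a class in $N_\eta$, I need $\munabN(n) \in \Ad^*(K)\eta$, so the key structural input is the identity $\Ad^*(G)\eta \cap \kk^* = \Ad^*(K)\eta$ for $\eta \in \kk^*$. In the reductive setting underlying the paper, this follows from a Cartan-type decomposition $G = K\exp(\kp)$: identifying $\kg^*$ with $\kg$ via an invariant bilinear form, for $X \in \kp$ and $\eta \in \kk$ the bracket relations $[\kp,\kk]\subset \kp$ and $[\kp,\kp]\subset \kk$ show that the $\kp$-component of $\exp(\ad X)\eta$ is $\sinh(\ad X)\eta$, whose vanishing forces $[X, \eta] = 0$ since $\ad X$ is symmetric with real eigenvalues on the relevant subspace. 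Granted this, $n$ descends to a class in $N_\eta$ that $\bar{\jmath}$ sends to the given class.

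Finally, continuity of $\bar{\jmath}$ is automatic from the quotient topologies on $N_\eta$ and $M_\eta$. A continuous inverse is assembled locally using continuous sections of the principal $K$-bundle $G \to G/K$ together with the proper $G$-action on $M$: on a small neighbourhood of $[[g,n]] \in M_\eta$, a local section lets me represent orbits canonically as $[e, n']$ with $n'$ varying continuously in $N$, giving the required continuous inverse and making $\bar{\jmath}$ a homeomorphism.
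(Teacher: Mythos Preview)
Your approach is the natural way to unpack the paper's one-line claim (the paper simply asserts that the lemma ``follows directly'' from relation \eqref{eq mu N M 1} once Lemma~\ref{lem mu N M} is in place, with no further argument). You are right that the whole content lies in surjectivity, and you correctly isolate the key ingredient as the identity $\Ad^*(G)\eta \cap \kk^* = \Ad^*(K)\eta$ for $\eta \in \kk^*$.

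There is, however, a gap in your justification of that identity. You invoke the global Cartan decomposition $G = K\exp(\kp)$, the relation $[\kp,\kp]\subset\kk$, and the real diagonalisability of $\ad X$ for $X\in\kp$. These are features of \emph{reductive} Lie groups; the paper's standing hypothesis in Subsection~\ref{sec fibred prod} is only that $G$ is almost connected, where none of these need hold (think of $G$ with nontrivial solvable radical). So your proof establishes the lemma only in the reductive case. To be fair, the paper offers no proof of this step either, and the same identity is used again at \eqref{eq decomp mu Gxi}; you have at least pinpointed what actually needs to be argued.

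A smaller point: your continuity argument for the inverse is more elaborate than necessary. Since $M = G\times_K N$, the map $N/K \to M/G$, $Kn \mapsto G[e,n]$, is a homeomorphism outright (this is just the standard identification of orbit spaces for an associated bundle), and your $\bar\jmath$ is the restriction of this homeomorphism to the subset $N_\eta$. Hence, once bijectivity onto $M_\eta$ is established, $\bar\jmath$ is automatically a homeomorphism onto its image; no local sections are needed.
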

Since $\xi$ is a regular value of $\mu^{\nabla^N}$, Proposition \ref{prop Spinc reg} implies that the $\Spinc$-structure on $N$ induces a $\Spinc$-structure on $N_{\xi}$, which equals $M_{\xi}$. In the proof of Theorem \ref{thm [Q,R]=0 cocpt}, we will use the fact that the two $\Spinc$ structures $P_{M_{\xi}}$ and $P_{N_{\xi}}$ are the same. 
\begin{proposition} \label{prop PNxi PMxi}
The $\Spinc$-structures $P_{M_{\xi}}$ and $P_{N_{\xi}}$ on $M_{\xi} \cong N_{\xi}$ are equal.
\end{proposition}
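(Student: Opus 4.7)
The plan is to compute both $\Spinc$-structures as destabilisations via \eqref{eq PMxi} applied to $M$ and $N$ respectively, and to show that, when pulled back along the inclusion $\munabNinv(\xi)\hookrightarrow \munabMinv(\xi)$ given by $n\mapsto[e,n]$, the two constructions agree. This inclusion descends to the homeomorphism $N_{\xi}\cong M_{\xi}$ of Lemma \ref{lem Nred Mred}, and $K_{\xi}$ acts on $\munabNinv(\xi)$ as the restriction of the $G_{\xi}$-action on $\munabMinv(\xi)$, so it suffices to verify the equality of $K_{\xi}$-equivariant $\Spinc$-structures over $\munabNinv(\xi)$ before passing to the quotient.

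The key geometric step is to identify the normal bundle $\cN^{\xi}_M$ in terms of $\cN^{\xi}_N$ and the slice part. Using Proposition \ref{prop slice spinc} together with \eqref{eq decomp TM 1}, one has $TM|_N = TN \oplus \kp_N$. Restricting further to $\munabNinv(\xi)$, splitting $TN|_{\munabNinv(\xi)} = q^*TN_{\xi}\oplus \cN^{\xi}_N$, and identifying $q^*TM_{\xi}|_{\munabNinv(\xi)}$ with $q^*TN_{\xi}$ via $M_{\xi}\cong N_{\xi}$, I expect a $K_{\xi}$-equivariant isomorphism
\[
\cN^{\xi}_M|_{\munabNinv(\xi)} \cong \cN^{\xi}_N \oplus \kp_N|_{\munabNinv(\xi)}
\]
that intertwines the $\Spin$-structures used to define $P_{M_{\xi}}$ and $P_{N_{\xi}}$, with the $\Spin$-structure on $\kp_N$ given by \eqref{eq Spin pN}. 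Remark \ref{rem G 2 out of 3} ensures that these $\Spinc$-structures pair up correctly under Plymen's two out of three lemma.

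With this identification, the rest is an exercise in the algebra of stabilisation and destabilisation. By \eqref{eq spinc ind}, the restriction of $P_M$ to $\munabNinv(\xi)\subset N$ equals $\Stab_{\kp_N}(P_N)|_{\munabNinv(\xi)}$. Substituting this and the normal bundle identification into \eqref{eq PMxi} for $P_{M_{\xi}}$, and applying Lemma \ref{lem destab stab}, first \eqref{eq destab destab} and then \eqref{eq destab stab}, yields
\[
\Destab_{\cN^{\xi}_M}\bigl(P_M|_{\munabNinv(\xi)}\bigr) = \Destab_{\cN^{\xi}_N}\bigl(\Destab_{\kp_N}\bigl(\Stab_{\kp_N}(P_N|_{\munabNinv(\xi)})\bigr)\bigr) = \Destab_{\cN^{\xi}_N}\bigl(P_N|_{\munabNinv(\xi)}\bigr).
\]
Taking $K_{\xi}$-quotients gives $P_{M_{\xi}} = P_{N_{\xi}}$.

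The main obstacle is the normal bundle identification: one must check not merely the vector bundle isomorphism (which follows from the splittings above) but also that it intertwines the $\Spin$-structures chosen to define the reduced $\Spinc$-structures, so that the two out of three lemma applies to transfer the $\Spinc$-structures consistently. Once this compatibility is in place, the remainder is a formal calculation with the identities of Lemma \ref{lem destab stab}.
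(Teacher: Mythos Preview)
Your approach is correct and shares the same core idea as the paper's proof: both recognise that $P_M$ restricted to the slice is $\Stab_{\kp_N}(P_N)$, and that the extra $\kp$-summand in the normal bundle can be peeled off using the identities of Lemma \ref{lem destab stab}.

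The execution differs. You work directly over $\munabNinv(\xi)$ with the $K_{\xi}$-action, restricting all objects to the slice and taking $K_{\xi}$-quotients at the end. The paper instead passes first to the full orbit via Lemma \ref{lem spinc red orbit}, writing $P_{M_{\xi}} = \Destab_{\cN^{G\xi}_M}\bigl(P_M|_{\munabMinv(G\xi)}\bigr)/G$, and then uses the fibred-product compatibility of Lemma \ref{lem stab fibred} together with Lemma \ref{lem normal N M} (which gives $\cN^{G\xi}_M = (G\times_K\cN^{K\xi}_N)\oplus(G\times_K\kp)$) to carry out the cancellation $G$-equivariantly; the final step $(G\times_K(\,\cdot\,))/G = (\,\cdot\,)/K$ recovers $P_{N_{\xi}}$.

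Your route is more direct and avoids the fibred-product machinery, but it leaves one step implicit: you need that the $G_{\xi}$-quotient of the destabilisation over $\munabMinv(\xi)$ agrees with the $K_{\xi}$-quotient of its restriction to $\munabNinv(\xi)$. This follows from the slice decomposition $\munabMinv(\xi)\cong G_{\xi}\times_{K_{\xi}}\munabNinv(\xi)$ (a consequence of \eqref{eq mu N M 1}) together with $G_{\xi}$-equivariance of the construction, but it should be stated explicitly. The paper's detour through the orbit $G\xi$ makes this passage transparent, at the cost of invoking Lemmas \ref{lem spinc red orbit} and \ref{lem stab fibred}. The obstacle you flag---that the $\Spin$-structure on $\cN^{\xi}_M$ restricts to the direct-sum $\Spin$-structure on $\cN^{\xi}_N\oplus\kp_N$---is precisely what the construction in Lemma \ref{lem TN restr} together with the two out of three lemma provides, so this is not a genuine gap.
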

Lemma \ref{lem Nred Mred} and Propositions \ref{prop Spinc fibred} and \ref{prop PNxi PMxi} will be proved in Subsections \ref{sec ind nabla}--\ref{sec PNxi PMxi}.

We end this section by mentioning a compatibility property of stabilising and destabilising $\Spin$-structures with the fibred product construction that appears in the slice theorem. This property will be used in the proof of Proposition \ref{prop PNxi PMxi}. 
Suppose $H<G$ is any closed subgroup, acting on a manifold $N$, and let $E \to N$ be an $H$-vector bundle with an $H$-equivariant $\Spinc$-structure $P_E \to N$. Then $G\times_H P_E \to G\times_H N$ is a $G$-equivariant $\Spinc$-structure for the $G$-vector bundle $G\times_H E \to G\times_H N$ (see Lemma 3.7 in \cite{HochsDS}). In the proof of Proposition \ref{prop PNxi PMxi}, we will use the fact that this construction is compatible with stabilisation and destabilisation.
\begin{lemma} \label{lem stab fibred}
In the above setting, let $F \to N$ be another $H$-vector bundle.
\begin{enumerate}
\item If $P_F \to N$ is an $H$-equivariant $\Spinc$-structure on $P_F$, then 
\[
G\times_H\Stab_E(P_F) = \Stab_{G\times_H E}(G\times_H P_F).
\]
\item If $P_{E\oplus F} \to N$ is an $H$-equivariant $\Spinc$-structure on $P_{E\oplus F}$, then 
\[
G\times_H\Destab_E(P_{E\oplus F}) = \Destab_{G\times_H E}(G\times_H P_{E\oplus F}).
\]
\end{enumerate}
\end{lemma}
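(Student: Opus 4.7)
The plan is to prove part (1) directly from the uniqueness in Plymen's two out of three lemma (Lemma \ref{lem 2 out of 3}), and then deduce part (2) from part (1) using the cancellation identities \eqref{eq stab destab} and \eqref{eq destab stab}. Throughout, let $P_E$ denote the $H$-equivariant $\Spinc$-structure on $E$ that is implicit in the notation $\Stab_E$ and $\Destab_E$.

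For part (1), both $G\times_H \Stab_E(P_F)$ and $\Stab_{G\times_H E}(G\times_H P_F)$ are $G$-equivariant $\Spinc$-structures on the common bundle $(G\times_H E) \oplus (G\times_H F) = G\times_H (E\oplus F)$, and both have determinant line bundle $G\times_H(L_E \otimes L_F)$ by Lemma \ref{lem 2 out of 3}. By the uniqueness statement in Lemma \ref{lem 2 out of 3} applied to the pair $(G\times_H E, G\times_H F)$ equipped with the structures $(G\times_H P_E, G\times_H P_F)$, it suffices to verify that both sides are produced from this pair by the construction of Section 3.1 of \cite{Plymen}. The right hand side satisfies this by the definition of stabilisation. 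For the left hand side, I would invoke the naturality of Plymen's construction, which is carried out fibrewise and depends functorially on the oriented Riemannian vector bundle data together with a $\Spinc$-structure. Under the identification $(G\times_H E)|_{[g,n]} \cong E|_n$ sending $e \mapsto [g,e]$, and its analogues for $F$ and $E\oplus F$, the structures $G\times_H P_E$, $G\times_H P_F$ and $G\times_H \Stab_E(P_F)$ restrict to $P_E|_n$, $P_F|_n$ and $\Stab_E(P_F)|_n$ respectively. Since $\Stab_E(P_F)|_n$ is by definition Plymen's construction applied to the first two, the same identification holds for $G\times_H \Stab_E(P_F)$ at every point $[g,n]$, which gives the required characterisation.

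Part (2) then follows by substitution. Applying part (1) with $P_F$ replaced by $\Destab_E(P_{E\oplus F})$, and using \eqref{eq stab destab}, gives
\[
G\times_H P_{E\oplus F} = G\times_H \Stab_E\bigl(\Destab_E(P_{E\oplus F})\bigr) = \Stab_{G\times_H E}\bigl(G\times_H \Destab_E(P_{E\oplus F})\bigr).
\]
Destabilising both sides by $G\times_H E$ and using \eqref{eq destab stab} yields part (2).

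The main obstacle is making the naturality assertion in part (1) genuinely precise: Plymen's construction proceeds via the embedding of structure groups $\Spinc(E)\times_{U(1)}\Spinc(F)\hookrightarrow \Spinc(E\oplus F)$ induced by the embedding of Clifford algebras, and one must verify that this embedding and the associated principal bundle constructions are preserved under the $G\times_H(-)$ operation. This should be essentially automatic, since everything involved is intrinsic to the vector bundle data and the associated bundle construction commutes with reductions and extensions of structure groups, but it requires unpacking Plymen's construction to state cleanly.
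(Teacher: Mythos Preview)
Your proposal is correct and essentially matches the paper's approach. The paper proves part (1) by appealing directly to the explicit constructions in Section 3.1 of \cite{Plymen}, noting that the spinor bundle associated to $G\times_H P_E$ is $G\times_H \cS_E$ and that this is compatible with the grading operators; your naturality/uniqueness framing is just a slightly more abstract packaging of the same verification. For part (2), the paper explicitly offers deduction from part (1) via \eqref{eq stab destab} and \eqref{eq destab stab} as one route, which is exactly what you do.
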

\begin{proof}
The first point follows from the explicit constructions in Section 3.1 of \cite{Plymen}. Here one uses the fact that the spinor bundle associated to $G\times_H P_{E}$ is $G\times_H \cS_E$, where $\cS_E \to N$ is the spinor bundle associated to $P_E$. This is compatible with the grading operators. 

The second point can be proved in a similar way, or deduced from the first point, by using the fact that destabilisation is the inverse of stabilisation, as in \eqref{eq stab destab} and \eqref{eq destab stab}. 
\end{proof}

\begin{remark}
We have only considered the principle $\Spinc(r)$-bundle part $P_E \to X$ of a $\Spinc$-structure on a vector bundle  $E \to X$ of rank $r$ over a manifold $X$, not the isomorphism
\[
P_E \times_{\Spinc(r)} \R^{r} \cong E.
\] 
If $E$ is the tangent bundle to $X$, then this isomorphism determines the Riemannian metric on $X$ induced by the $\Spinc$-structure. For cocompact actions, where we will apply the material in this subsection, the index of the $\Spinc$-Dirac operator
 is independent of this metric, however.
\end{remark}


\part{Cocompact actions} \label{part cocpt}


\section{The result on cocompact actions} \label{sec result cocpt}


The main result on cocompact actions is Theorem \ref{thm [Q,R]=0 cocpt}, which states that
 that $\Spinc$-quantisation commutes with reduction at $K$-theory generators.
In this section, we state Paradan and Vergne's result for compact groups and manifolds in \cite{PV}, and Theorem \ref{thm [Q,R]=0 cocpt} for cocompact actions.  We will deduce Theorem \ref{thm [Q,R]=0 cocpt} from Paradan and Vergne's result in Section \ref{sec cocpt}. 



We keep using the notation of Section \ref{sec Dirac red}.

\subsection{The compact case} \label{sec cpt}

First of all, we define $\Spinc$-quantisation of sufficiently regular reduced spaces, which will always be compact in the settings we consider. Let $\xi$ be a $\Spinc$-regular value of $\munab$. Then by  Lemma \ref{lem Spinc normal}, the reduced space $M_{\xi}$ is a $\Spinc$-orbifold. 
Suppose that $M_{\xi}$ is compact and even-dimensional. Let $D_{M_{\xi}}$ be the $\Spinc$-Dirac operator on $M_{\xi}$, defined with the connection on the determinant line bundle $L_{\xi} \to M_{\xi}$ induced by a given connection on the determinant line bundle $L \to M$. 
\begin{definition} \label{def QR}
The \emph{$\Spinc$-quantisation} of $M_{\xi}$ is the index of $D_{M_{\xi}}$:
\[
Q^{\Spinc}(M_{\xi}) := \ind(D_{M_{\xi}}) \quad \in \Z.
\]
\end{definition}

Now suppose that $G = K$ is compact and connected. Suppose that $M$ is  even-dimensional, and also compact and connected. Since $M$ is even-dimensional, the spinor bundle $\cS$ splits into  even and odd parts, sections of which are interchanged by the
 $\Spinc$-Dirac operator $D$. Because $M$ is compact, this Dirac operator has finite-dimensional kernel, and one can define
 \begin{equation} \label{eq def quant cpt}
 Q^{\Spin^c}_K(M) := \Kind(D) = [\ker D^+] - [\ker D^-] \quad \in R(K),
 \end{equation}
 where $D^{\pm}$ are the restrictions of $D$ to the even and odd parts of $\cS$, repectively, and $R(K)$ is the representation ring of $K$.

Let $T<K$ be a maximal torus, with Lie algebra $\kt \subset \kk$. Let $\kt^*_+ \subset \kt^*$ be a choice of (closed) positive Weyl chamber. Let $R$ be the set of roots of $(\kk_{\C}, \kt_{\C})$, and let $R^+$ be the set of positive roots with respect to $\kt^*_+$. Set
\[
\rho_K := \frac{1}{2}\sum_{\alpha \in R^+} \alpha.
\]

 Let $\cF$ be the set of relative interiors of faces of $\kt^*_+$. Then
\[
\kt^*_+ = \bigcup_{\sigma \in \cF} \sigma,
\]
a disjoint union. For $\sigma \in \cF$, let $\kk_{\sigma}$ be the infinitesimal stabiliser of a point in $\sigma$. Let $R_{\sigma}$ be the set of roots of $\bigl( (\kk_{\sigma})_{\C}, \kt_{\C}\bigr)$, and let $R_{\sigma}^+ := R_{\sigma} \cap R^+$. Set
\[
\rho_{\sigma} := \frac{1}{2}\sum_{\alpha \in R^+_{\sigma}} \alpha.
\]
Note that, if $\sigma$ is the interior of $\kt^*_+$, then $\rho_{\sigma} = 0$.

For any subalgebra $\kh \subset \kk$, let $(\kh)$ be its conjugacy class. Set
\[
\cH_{\kk} := \{ (\kk_{\xi}); \xi \in \kk\}.
\]
For $(\kh) \in \cH_{\kk}$, write
\[
\cF(\kh) := \{\sigma \in \cF; (\kk_{\sigma}) = (\kh)\}.
\]
Let $(\kk^M)$ be the conjugacy class of the generic (i.e.\ minimal) infinitesimal stabiliser $\kk^M$ of the action by $K$ on $M$. Note that by Lemma \ref{lem loc free}, one has $(\kk^M) = 0$ if $\munab$ has regular values.

Let $\Lambda_+ \subset i\kt^*$ be the set of dominant integral weights. 
In the $\Spinc$-setting, it is natural to parametrise the irreducible representations by their infinitesimal characters, rather than by their highest weights. For $\lambda \in \Lambda_+ + \rho_K$, let $\pi^K_{\lambda}$ be the irreducible representation of $K$ with infinitesimal character $\lambda$, i.e.\ with  highest weight $\lambda - \rho_K$. Then one has, for such $\lambda$,
\[
Q^{\Spinc}(K\cdot \lambda) = \pi_{\lambda}^K,
\]
see Lemma 2.1 in \cite{PV}.

Write
\[
 Q^{\Spin^c}_K(M) = \bigoplus_{\lambda \in \Lambda_+ + \rho_K} m_{\lambda} [\pi^K_{\lambda}],
\]
with $m_{\lambda} \in \Z$. Then Paradan and Vergne proved the following expression for $m_{\lambda}$ in terms of reduced spaces.
\begin{theorem}[\cite{PV}, Theorem 3.4] \label{thm [Q,R]=0 cpt}
Suppose $([\kk^M, \kk^M]) = ([\kh, \kh])$, for $(\kh) \in \cH_{\kk}$. Then
\begin{equation} \label{eq [Q,R]=0 cpt}
m_{\lambda} = \sum_{  \begin{array}{c} \vspace{-1.5mm} \scriptstyle{\sigma \in \cF(\kh) \text{ s.t.}}\\  \scriptstyle{\lambda - \rho_{\sigma} \in \sigma} \end{array}} Q^{\Spin^c}(M_{\lambda - \rho_{\sigma}}).
\end{equation}
\end{theorem}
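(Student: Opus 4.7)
The plan is to follow the transversally elliptic symbol / Witten deformation approach pioneered by Paradan. Fix an $\Ad(K)$-invariant inner product on $\kk$, identify $\kk^* \cong \kk$, and form the \emph{Kirwan vector field} $\kappa \in \Gamma^{\infty}(TM)$ given by $\kappa(m) = (\munab(m))^M_m$. I would deform the principal symbol of $D$ by subtracting $c(\kappa)$, obtaining a $K$-transversally elliptic symbol whose equivariant index still equals $\Kind(D)$; equivalently, one can work analytically with the deformed Dirac operator $D_t := D + i t c(\kappa)$ and use a Bochner-type identity for $D_t^2$ to show that, as $t \to \infty$, the relevant spectral data concentrate on the critical set $\Crit(\|\munab\|^2)$.

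This critical set decomposes as $\bigsqcup_{\beta} Z_\beta$ with $\beta$ running over a discrete subset of $\kt^*_+$ and $Z_\beta := K \cdot \bigl(\munab^{-1}(\beta) \cap M^\beta\bigr)$, so that excision expresses $Q_K^{\Spinc}(M)$ as a sum over $\beta$ of local contributions supported in $K$-invariant neighbourhoods of the $Z_\beta$. For $\beta$ in the relative interior of a face $\sigma \in \cF$, a $K$-equivariant tubular neighbourhood of $Z_\beta$ takes the form $K \times_{K_\beta} U$, where $U$ is a $K_\beta$-neighbourhood of $\munab^{-1}(\beta)$. Applying Lemma \ref{lem 2 out of 3} to the splitting
\[
TM|_{\munab^{-1}(\beta)} \;\cong\; T\munab^{-1}(\beta) \oplus \cN^{\beta} \oplus (\kk/\kk_\beta)_{\munab^{-1}(\beta)},
\]
together with Lemma \ref{lem Spinc normal}, identifies the $\Spinc$-structure on the slice as a product of (i) the pullback of the $\Spinc$-structure on the reduced space $M_\beta$, and (ii) a $K_\beta$-equivariant $\Spinc$-structure on the coadjoint direction $\kk/\kk_\beta \cong T_\beta(K\cdot\beta)$, whose determinant line bundle has infinitesimal character $2\rho_\sigma$.

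Combining this $\Spinc$-decomposition with induction from $K_\beta$ to $K$ and with the identity $Q^{\Spinc}(K\cdot\lambda) = \pi^K_\lambda$ (Lemma 2.1 of \cite{PV}) converts the local contribution at $Z_\beta$ into $Q^{\Spinc}(M_\beta)$ copies of $\pi^K_{\beta+\rho_\sigma}$. Reindexing via $\lambda = \beta + \rho_\sigma$ (so that $\lambda - \rho_\sigma \in \sigma$ precisely picks out the allowed $\beta$) and invoking the hypothesis $([\kk^M,\kk^M]) = ([\kh,\kh])$ --- which rules out contributions from faces $\sigma \notin \cF(\kh)$ because the generic-stabiliser condition forces the corresponding reduced spaces to be empty or to have vanishing $\Spinc$-index --- assembles the local data into the formula \eqref{eq [Q,R]=0 cpt}.

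The hard part will be the localisation step itself: one must establish a Bochner estimate of the form $D_t^2 \geq c(t - C)$ on compact subsets away from $\Crit(\|\munab\|^2)$, and then sum the local model contributions in a way that is compatible with the $R(K)$-module structure and the transversally elliptic machinery. A more delicate secondary point is the accurate bookkeeping of the $\rho_\sigma$-shift, which requires tracking $\Spinc$-structures through the fibred-product decomposition of the tubular neighbourhood fibrewise along each coadjoint orbit $K\cdot\beta$; the two-out-of-three formalism of Section \ref{sec Spinc red slice} is tailored to do exactly this, and in my proof I would adapt it, together with Remark \ref{rem G 2 out of 3} and Lemma \ref{lem destab stab}, to propagate the shift cleanly from the model on $\kk/\kk_\beta$ to the induced representation on $K\cdot\beta$.
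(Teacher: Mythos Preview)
The paper does not contain a proof of this theorem. Theorem \ref{thm [Q,R]=0 cpt} is quoted as Theorem 3.4 of \cite{PV}; it is Paradan and Vergne's result, stated here solely so that it can be fed into the quantisation-commutes-with-induction machinery in Subsection \ref{sec [Q,Ind]=0} to deduce Theorem \ref{thm [Q,R]=0 cocpt}. There is therefore no proof in the paper to compare your proposal against.

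That said, a brief remark on your sketch relative to what \cite{PV, PV2} actually do. The paper notes in the introduction that ``Paradan and Vergne use topological methods'', i.e.\ they work with transversally elliptic symbols and equivariant $K$-theory directly rather than with an analytic Bochner estimate for $D_t^2$. Your outline mixes the analytic deformation (a Tian--Zhang style argument, which is what \emph{this} paper generalises in Part \ref{part non-cocpt} for reduction at zero in the noncompact setting) with the topological localisation picture. The decomposition of $\Crit(\|\munab\|^2)$ and the induction-from-$K_\beta$ step are essentially correct in spirit, but two points are genuinely delicate and your sketch glosses over them: first, the treatment of \emph{singular} reduced spaces (Section 4 of \cite{PV}) is needed to even define $Q^{\Spinc}(M_{\lambda-\rho_\sigma})$ when $\lambda-\rho_\sigma$ is not a regular value, and your tubular-neighbourhood argument implicitly assumes regularity; second, the role of the hypothesis $([\kk^M,\kk^M]) = ([\kh,\kh])$ is not simply to make unwanted reduced spaces empty --- it governs which strata can carry a nonzero local index via a more subtle argument about the infinitesimal action on the determinant line bundle. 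If you want to pursue this line, the full account is in \cite{PV2, PV3}.
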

Here the quantisation $Q^{\Spin^c}(M_{\lambda - \rho_{\sigma}})$ of the reduced space\footnote{for $\xi \in i\kk^*$, we write $M_{\xi}:= M_{\xi/i}$.} $M_{\lambda - \rho_{\sigma}}$ is defined in Section 4 of \cite{PV}, which includes cases where Lemma \ref{lem Spinc normal} does not apply, and reduced spaces are singular. 


If the generic stabiliser $\kk^M$ is \emph{Abelian}, Theorem \ref{thm [Q,R]=0 cpt} simplifies considerably. 
As noted above, 
 this occurs in particular if $\munab$ has a regular value.
\begin{corollary} \label{cor cpt Ab stab}
If $\kk^M$ is Abelian, then
\[
m_{\lambda} =  Q^{\Spin^c}(M_{\lambda} ).
\]
\end{corollary}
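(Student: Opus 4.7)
The proof amounts to showing that, under the Abelian hypothesis on $\kk^M$, the index set of the sum in Theorem \ref{thm [Q,R]=0 cpt} contains exactly one element.

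The strategy is to apply Theorem \ref{thm [Q,R]=0 cpt} with the class $(\kh) \in \cH_{\kk}$ determined by $([\kk^M, \kk^M]) = ([\kh, \kh])$. Since $\kk^M$ is Abelian, $[\kk^M, \kk^M] = 0$, so I need $(\kh)$ such that $\kh$ is Abelian. I claim the only such class in $\cH_{\kk}$ is $(\kt)$. Indeed, every element of $\cH_{\kk}$ is the conjugacy class of some infinitesimal stabiliser $\kk_\xi$, and in a compact connected Lie algebra $\kk_\xi$ is Abelian precisely when $\xi$ is a regular element, in which case $\kk_\xi$ is a maximal torus, conjugate to $\kt$. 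So I take $\kh = \kt$.

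Next, I would identify $\cF(\kt)$. For $\sigma \in \cF$, the stabiliser $\kk_\sigma$ equals $\kt$ (up to conjugacy) exactly when no root of $(\kk_\C, \kt_\C)$ vanishes on $\sigma$. This occurs only when $\sigma$ is the interior $\sigma_0$ of the positive Weyl chamber $\kt^*_+$: on every proper face at least one simple root vanishes, so $\kk_\sigma$ contains non-trivial root spaces and is non-Abelian. Consequently $\cF(\kt) = \{\sigma_0\}$, and since $R^+_{\sigma_0} = \emptyset$, we have $\rho_{\sigma_0} = 0$.

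Finally, any $\lambda \in \Lambda_+ + \rho_K$ is strictly dominant, because $\rho_K$ lies in the interior of $\kt^*_+$ and $\Lambda_+$ is contained in its closure. Hence $\lambda = \lambda - \rho_{\sigma_0} \in \sigma_0$, so the summation condition in \eqref{eq [Q,R]=0 cpt} is automatically satisfied. The right-hand side of that formula therefore reduces to the single term $Q^{\Spinc}(M_\lambda)$, yielding the corollary. There is no real obstacle here; the only point worth emphasising is the Lie-theoretic fact that a centraliser subalgebra $\kk_\xi$ in a compact connected Lie algebra is Abelian if and only if $\xi$ is regular, which is what pins $(\kh) = (\kt)$ down uniquely and in turn collapses the sum.
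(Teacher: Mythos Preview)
Your proof is correct and follows essentially the same approach as the paper: take $\kh = \kt$, observe that $\cF(\kt)$ consists only of the open chamber $\sigma_0$, so $\rho_{\sigma_0}=0$ and the sum in \eqref{eq [Q,R]=0 cpt} collapses to the single term $Q^{\Spinc}(M_\lambda)$. You supply more detail than the paper does---in particular the justification that $(\kt)$ is the unique Abelian class in $\cH_{\kk}$ and the verification that every $\lambda \in \Lambda_+ + \rho_K$ is strictly dominant so the summation condition $\lambda - \rho_{\sigma_0} \in \sigma_0$ is met---but the route is the same.
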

\begin{proof}
If one takes $\kh = \kt$ in Theorem \ref{thm [Q,R]=0 cpt}, then $\cF(\kh)$ only contains the interior of $\kt^*_+$. Hence $\rho_{\sigma} = 0$, for the single element $\sigma \in \cF(\kh)$.
\end{proof}
In particular, if $0$ is a regular value of $\mu^{\nabla}$, then the invariant part of the $\Spin^c$-quantisation of $M$ is
\begin{equation} \label{eq [Q,R]=0 cpt zero}
 Q^{\Spin^c}_K(M)^K = Q^{\Spin^c}(M_{\rho_K}),
\end{equation}
since $\pi^K_{\rho_K}$ is the trivial representation.


\subsection{The cocompact case} \label{sec cocpt zero}

Now suppose $M$ and $G$ may be noncompact, but $M/G$ is compact. Then Landsman \cite{HL, Landsman} defined geometric quantisation via the \emph{analytic assembly map} from the Baum--Connes conjecture  \cite{BCH}. This takes values in the $K$-theory of the maximal or reduced group $C^*$-algebra $C^*G$ or $C^*_{\red}G$ of $G$. Landsman's definition extends directly to the $\Spinc$ case.
\begin{definition} \label{def quant cocpt}
If $M/G$ is compact, the \emph{$\Spinc$-quantisation} of the action by $G$ on $M$ is
\begin{equation} \label{eq def quant cocpt}
Q^{\Spinc}_G(M) := \Gind(D) \quad \in K_*(C^*G),
\end{equation}
where $\Gind$ denotes the analytic assembly map.
\end{definition}
In this definition, the maximal $C^*$-algebra $C^*G$ of $G$ was used. By applying the map
\[
r_*: K_*(C^*G) \to K_*(C^*_{\red}G)
\]
induced by the natural map $r: C^*G \to C^*_{\red}G$, one obtains the \emph{reduced\footnote{Note that the word `reduced' and the map $r_*$ used here have nothing to do with reduction; this is just an unfortunate clash of terminology.} $\Spin^c$-quantisation}
\[
Q^{\Spinc}_G(M)_{\red} := r_* \bigl( Q^{\Spinc}_G(M) \bigr)  \quad \in K_*(C^*_{\red}G).
\]
(This is equal to \eqref{eq def quant cocpt}, if $\Gind$ denotes the assembly map for $C^*_{\red}G$, but we include the map $r_*$ to make the distinction clear.)
If $G$ is compact, then $K_*(C^*G)$ and $K_*(C^*_{\red}G)$ equal the representation ring $R(G)$ of $G$. Then
 the above definitions of $\Spinc$-quantisation and reduced $\Spinc$-quantisation both reduce to \eqref{eq def quant cpt}. 

Landsman used the reduction map
\[
R_0: K_*(C^*G) \to \Z
\]
induced on $K$-theory by the continuous map
\[
C^*G \to \C,
\]
which on $C_c(G) \subset C^*G$ is given by integration over $G$. 
If $G$ is compact, then $R_0: R(G) \to \Z$ is taking the multiplicity of the trivial representation. 
Landsman conjectured that
\begin{equation} \label{eq conj [Q,R]=0 cocpt zero}
R_0 \bigl( Q_G(M)  \bigr) = Q(M_0),
\end{equation}
in the symplectic case (if $M_0$ is smooth). Here quantisation is defined as in Definition \ref{def quant cocpt}, where $D$ is  a Dirac operator coupled to a prequantum line bundle. 

This conjecture was proved by Hochs and Landsman  \cite{HL} for a specific class of groups $G$, and by Mathai and Zhang \cite{MZ} for general $G$, where one may need to replace the prequantum line bundle  by a tensor power. As a special case of Theorem \ref{thm [Q,R]=0 large p}, we will obtain a generalisation to the $\Spinc$-setting of Mathai and Zhang's result on the Landsman conjecture  (see Corollary \ref{lem cocpt special case}). This asserts that \eqref{eq conj [Q,R]=0 cocpt zero} still holds for $\Spinc$-quantisation, for a well-chosen $\Spinc$-structure on $M$ and a connection on its determinant line bundle. (See Subsection \ref{sec rho shift} for questions about $\rho$-shifts in this context.)


\subsection{Reduction at nonzero values of $\munab$} \label{sec cocpt nonzero}

Landsman's conjecture was extended to reduction at $K$-theory classes corresponding to nontrivial representations in \cite{HochsDS, HochsPS}. Here one works with reduced quantisation, with values in $K_*(C^*_{\red}G)$. 

Because we will deduce the result in this subsection from Paradan and Vergne's result in \cite{PV}, we now adopt their convention concerning the definition of the momentum map:
\[
-\frac{i}{2}\munab_X  = \nabL_{X^M} - \cL^L_{X}.
\]
I.e., the factor $2\pi i$ in \eqref{eq def Spinc mom map}, which was chosen for consistency with \cite{HM, TZ98}, is replaced by $-i/2$. We use this convention in the present subsection, and in Section \ref{sec cocpt}.

Suppose $G$ is almost connected, and let $K<G$ be a maximal compact subgroup. With notation as in Subsection \ref{sec cpt}, one has
\[
R(K) = \bigoplus_{\lambda \in \Lambda_+ + \rho_K} \Z [\pi^K_{\lambda}]. 
\]
Set $d := \dim(G/K)$.
By the Connes--Kasparov conjecture, proved in \cite{CEN} for almost connected groups, the \emph{Dirac induction} map
\[
\DInd_K^G: R(K) \to K_d(C^*_{\red}G)
\]
is an isomorphism of Abelian groups, while $K_{d+1}(C^*_{\red}G) = 0$. In other words, the $K$-theory group $K_*(C^*_{\red}G)$ is the free Abelian group generated by 
\begin{equation} \label{eq class lambda}
[\lambda] := \DInd_K^G[\pi^K_{\lambda}],
\end{equation}
for $\lambda \in \Lambda_+ + \rho_K$, and these generators have degree $d$. For $G$ semisimple with discrete series, `most' of the generators $[\lambda]$ are associated to discrete series representations \cite{Lafforgue}. If $G$ is complex-semisimple, they are associated to families of principal series representations \cite{PP}. See also \cite{HochsPS}. 

Since $K_{d+1}(C^*_{\red}G) = 0$, it follows that $Q^{\Spinc}_G(M)_{\red} = 0$ if $d_M$ and $d$ have different parities. (Recall that we set $d_M := \dim(M)$.) So assume $d_M - d$ is even. In \cite{HochsPS}, the case where $M$ carries a (pre)symplectic form was considered. It was conjectured that quantisation commutes with reduction at any $\lambda \in \Lambda_+ + \rho_K$, in the sense that
\begin{equation} \label{eq conj [Q,R]=0 cocpt nonzero}
Q^{\Spinc}_G(M)_{\red} = \sum_{\lambda \in \Lambda_+ + \rho_K} Q(M_{\lambda}) [\lambda] \quad \in K_d(C^*_{\red}G).
\end{equation}
It was assumed that the momentum map image has nonzero intersection with the interior of a positive Weyl chamber, to simplify the $\rho$-shifts that occur (analogously to the way Theorem \ref{thm [Q,R]=0 cpt} simplifies to Corollary \ref{cor cpt Ab stab}). We will not make this assumption in Theorem \ref{thm [Q,R]=0 cocpt}.

In the symplectic setting, a formal version of quantisation, defined as the right hand side of \eqref{eq conj [Q,R]=0 cocpt nonzero}, was extended to non-cocompact actions and studied in \cite{HM2}.



Replacing $G$ by a double cover if necessary, we may assume the lift \eqref{eq tilde Ad} of the adjoint action by $K$ on $\kp$ exists. Let the slice $N \subset M$ and the $\Spinc$-structure $P_N \to N$ be as in Proposition \ref{prop slice spinc}. Since $M/G$ is compact, $N$ is compact in this case. We choose a connection $\nabla^M$ on $L^M$ such that \eqref{eq mu N M 1} holds.

To quantise singular reduced spaces, we extend Definition \ref{def QR} by using the homeomorphism of Lemma \ref{lem Nred Mred} and Paradan and Vergne's definition in the singular case. Recall that $\mu^{\nabla^N}$ is the $\Spinc$-momentum map for the action by $K$ on $N$.
\begin{definition} \label{def QR sing}
If $\xi \in \kk^*$ is a singular value of $\mu^{\nabla^N}$, then
\[
Q^{\Spinc}(M_{\xi}) := Q^{\Spinc}(N_{\xi}),
\] 
where $Q^{\Spinc}(N_{\xi})$ is defined as in Section 4 of \cite{PV}.
\end{definition}
Note that different choices of $N$ lead to homeomorphic reduced spaces by Lemma \ref{lem Nred Mred}.
If $\xi$ is a \emph{regular} value of $\mu^{\nabla^N}$, then Definition \ref{def QR} applies by Proposition \ref{prop Spinc fibred}. Because of Proposition \ref{prop PNxi PMxi}, one has $Q^{\Spinc}(M_{\xi}) = Q^{\Spinc}(N_{\xi})$ in that case, so Definitions \ref{def QR} and \ref{def QR sing} are consistent.

\begin{remark}
Another way of phrasing Definition \ref{def QR sing} is that for any (singular) value $\xi \in \kk^*$  of $\mu^{\nabla^N}$, the shifted element $\xi + \varepsilon \in \kk^*$ is a regular value for generic $\varepsilon$ in a linear subspace of $\kk^*$, and hence a $\Spinc$-regular value of $\munabM$ by Proposition \ref{prop Spinc fibred}. Furthermore, the quantisation $Q^{\Spinc}(M_{\xi+ \varepsilon})$ is independent of such $\varepsilon$ close enough to $0$. Hence one can define 
\[
Q^{\Spinc}(M_{\xi}) := Q^{\Spinc}(M_{\xi+ \varepsilon})
\]
for such $\varepsilon$.

To see that this is true, note that, as mentioned above, Proposition \ref{prop PNxi PMxi} implies that
\[
Q^{\Spinc}(M_{\xi+ \varepsilon}) = Q^{\Spinc}(N_{\xi+ \varepsilon})
\]
if $\xi + \varepsilon$ is a regular value of $\munabN$. The claim therefore follows from Theorem 5.4 in \cite{PV} if $K$ is a torus, and from the arguments in Section 5.3 in \cite{PV} for general $K$.
\end{remark}

Paradan and Vergne's result generalises to the cocompact setting in the following way.
\begin{theorem}[$\Spin^c$ quantisation commutes with reduction; cocompact case] \label{thm [Q,R]=0 cocpt}
If $M$ and $G$ are connected, and  $d_M - d$ is even, then
\begin{equation} \label{eq [Q,R]=0 cocpt}
Q^{\Spinc}_{G}(M)_{\red} = \sum_{\lambda \in \Lambda_+ + \rho_K} m_{\lambda} [\lambda],
\end{equation}
with $m_{\lambda}$ given by \eqref{eq [Q,R]=0 cpt}.
\end{theorem}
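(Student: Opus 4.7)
The plan is to reduce the cocompact case to Paradan and Vergne's compact result (Theorem \ref{thm [Q,R]=0 cpt}) via the $\Spinc$-slice theorem (Proposition \ref{prop slice spinc}) together with Dirac induction, in the spirit of the quantisation-commutes-with-induction framework of \cite{HochsDS, HochsPS}. Write $M \cong G \times_K N$ with $\Spinc$-structure $P_M = G \times_K \Stab_{\kp_N}(P_N)$; since $M/G$ is compact, $N$ is compact.

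The central step is the induction formula
\[
r_*\bigl(Q^{\Spinc}_G(M)\bigr) = \DInd_K^G\bigl(Q^{\Spinc}_K(N)\bigr) \quad \in K_d(C^*_{\red}G).
\]
I would establish this by expressing both sides as Kasparov products. The $G$-index of the Dirac operator on $M = G\times_K N$, taken with the connection $\nabla^M$ induced from $\nabla^N$, factors as an external product, because $\kp_N$ carries a $K$-equivariant $\Spin$-structure and hence contributes no additional determinant twist beyond the Dirac operator transverse to $N$ inside $M$. The map $\DInd_K^G$ is by construction the Kasparov product with the class of the Dirac operator on $G/K$, and the parity hypothesis $d_M - d$ even ensures the resulting class lies in degree $d$.

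Next, Paradan and Vergne's theorem applied to the compact $K$-manifold $N$ yields
\[
Q^{\Spinc}_K(N) = \bigoplus_{\lambda \in \Lambda_+ + \rho_K} \tilde m_\lambda \, [\pi^K_\lambda], \qquad \tilde m_\lambda = \sum_{\sigma \in \cF(\kh) \,:\, \lambda - \rho_\sigma \in \sigma} Q^{\Spinc}(N_{\lambda - \rho_\sigma}),
\]
once one verifies that the generic infinitesimal stabiliser of $K$ on $N$ has the required relation to $(\kh)$. Under $M \cong G\times_K N$, the $K$-stabiliser of a point in $N$ equals its $G$-stabiliser in $M$, so the hypothesis $([\kk^M, \kk^M]) = ([\kh, \kh])$ translates into the condition needed on $N$. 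By Proposition \ref{prop PNxi PMxi} at regular values and Definition \ref{def QR sing} at singular ones, $Q^{\Spinc}(N_\xi) = Q^{\Spinc}(M_\xi)$ for every $\xi \in \kk^*$, hence $\tilde m_\lambda = m_\lambda$. Applying $\DInd_K^G$ termwise and using $[\lambda] = \DInd_K^G[\pi^K_\lambda]$ from \eqref{eq class lambda} delivers \eqref{eq [Q,R]=0 cocpt}.

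The hardest step will be the rigorous justification of the induction formula. While \cite{HochsDS, HochsPS} handle this in the symplectic setting, here one must verify that the $KK$-theoretic decomposition genuinely respects the $\Spinc$-slice structure \eqref{eq spinc ind}: the Dirac operator on $M$ built from $\nabla^M$ must coincide, up to a compact perturbation irrelevant for the index, with the product of a fibrewise Dirac operator along $G/K$ and the Dirac operator on $N$. A secondary technicality is to confirm that the connection $\nabla^M$ chosen so that \eqref{eq mu N M 1} holds is compatible with this product decomposition; this compatibility is precisely what makes the identification of reduced spaces in Proposition \ref{prop PNxi PMxi} align with the identification of Dirac indices under $\DInd_K^G$.
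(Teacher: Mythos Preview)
Your proposal is correct and follows essentially the same route as the paper: slice via Proposition \ref{prop slice spinc}, establish the quantisation-commutes-with-induction identity $Q^{\Spinc}_G(M)_{\red} = \DInd_K^G\bigl(Q^{\Spinc}_K(N)\bigr)$, apply Paradan--Vergne on the compact slice $N$, and match reduced spaces via Proposition \ref{prop PNxi PMxi} and Definition \ref{def QR sing}. The only difference is packaging: the paper isolates the induction identity as a separate theorem (Theorem \ref{thm [Q,Ind]=0}) and proves it not by a direct Kasparov-product factorisation as you sketch, but by invoking the $K$-homology induction map $\KInd_K^G$ of \cite{HochsDS, HochsPS} together with the commutative square relating it to $\DInd_K^G$ and the assembly maps, then citing Section 6 of \cite{HochsDS} (whose arguments are already phrased purely in $\Spinc$ terms) for the fact $\KInd_K^G[D_N] = [D_M]$---this is exactly the ``hardest step'' you anticipate, and the paper simply observes it is already done.
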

This result will be proved in Section \ref{sec cocpt}. We will use the constructions in Subsections \ref{sec fibred prod} and \ref{sec slice red} and a quantisation commutes with induction result to deduce it from Paradan and Vergne's result.
In the symplectic setting, an additional assumption was needed in \cite{HochsDS} to 
 apply a similar kind of reasoning. The authors view this as a sign that it is very natural to study the quantisation commutes with reduction problem in the $\Spinc$-setting.

\section{$\Spinc$-structures on reduced spaces and fibred products} \label{sec cocpt}

In this section, we prove the statements in Section \ref{sec Spinc red slice}. Together with a generalisation of the \emph{quantisation commutes with induction} results in \cite{HochsDS, HochsPS}, this allows us to deduce
Theorem \ref{thm [Q,R]=0 cocpt} from Paradan and Vergne's result, Theorem \ref{thm [Q,R]=0 cpt}. Note that in Subsections \ref{sec Spinc red} and \ref{sec fibred prod}, group actions were not asumed to be cocompact. So the statements made there apply more generally (and many will also be used in Part \ref{part non-cocpt}). The cocompactness assumption will only be made in Subsection \ref{sec [Q,Ind]=0}.

Proposition \ref{prop Spinc reg}, Lemma \ref{lem Nred Mred} and Propositions \ref{prop Spinc fibred} and \ref{prop PNxi PMxi} are proved in Subsections  \ref{sec red reg}--\ref{sec PNxi PMxi}.
In Subsection \ref{sec [Q,Ind]=0}, we show that quantisation commutes with induction in the $\Spinc$-setting, and use this to prove Theorem \ref{thm [Q,R]=0 cocpt}.


\subsection{$\Spinc$-reduction at regular values} \label{sec red reg}

We start by proving Proposition \ref{prop Spinc reg}. Suppose $\xi \in \kg^*$ is a regular value of $\munab$. 
Then by Lemma \ref{lem loc free}, $G_{\xi}$ acts locally freely on $\munabinv(\xi)$. 
Let $q: \munabinv(\xi) \to M_{\xi}$ be the quotient map. The restriction of $TM$ to $\munabinv(\xi)$ decomposes as follows.
\begin{lemma} \label{lem TM restr}
There is a $G_{\xi}$-equivariant isomorphism of vector bundles
\begin{equation} \label{eq TM restr}
TM|_{\munabinv(\xi)} = q^*TM_{\xi} \oplus \kg^* \oplus \kg_{\xi},
\end{equation}
where $G_{\xi}$ acts on the right hand side by
\[
g\bigl( (m, v), \eta, X\bigr) = \bigl( (gm, v), \Ad^*(g) \eta, \Ad(g)X\bigr), 
\]
for $g \in G_{\xi}$, $m \in \munabinv(\xi)$, $v \in T_{G_{\xi}\cdot m}M_{\xi}$, $\eta \in \kg^*$ and $X \in \kg_{\xi}$.
\end{lemma}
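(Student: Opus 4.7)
The plan is to build the decomposition summand-by-summand by taking successive orthogonal complements with respect to a $G$-invariant Riemannian metric on $M$ (for instance, the one coming from the $\Spinc$-structure), and then to check that each identification is $G_\xi$-equivariant.

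First I would extract the $\kg^*$-summand. Since $\xi$ is a regular value, at each $m \in \munabinv(\xi)$ the differential $T_m\munab : T_mM \to \kg^*$ is surjective with kernel $T_m\munabinv(\xi)$. Equivariance of $\munab$ gives $T_{gm}\munab \circ T_mL_g = \Ad^*(g) \circ T_m\munab$ for $g \in G_\xi$, so the metric-orthogonal complement $\cH \subset TM|_{\munabinv(\xi)}$ of $T\munabinv(\xi)$ is a $G_\xi$-invariant subbundle, and $T\munab$ restricts to a $G_\xi$-equivariant bundle isomorphism $\cH \xrightarrow{\cong} (\kg^*)_{\munabinv(\xi)}$, with the coadjoint action on fibers.

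Next I would split $T\munabinv(\xi)$ itself into directions along and transverse to the $G_\xi$-orbits. By Lemma \ref{lem loc free}, the infinitesimal action $\iota : (m, X) \mapsto X^M_m$ gives an injection $(\kg_\xi)_{\munabinv(\xi)} \hookrightarrow T\munabinv(\xi)$ with image a subbundle $\cV_0$; this map is $G_\xi$-equivariant (with the adjoint action on $\kg_\xi$) by the standard identity $(\Ad(g)X)^M_{gm} = T_mL_g \cdot X^M_m$. Taking $\cV_1 := \cV_0^\perp$ inside $T\munabinv(\xi)$, the differential $Tq$ restricts fiberwise to an isomorphism $\cV_1 \xrightarrow{\cong} q^*TM_\xi$, since its kernel on $T\munabinv(\xi)$ is exactly $\cV_0$.

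Combining, one has a $G_\xi$-equivariant decomposition
\[
TM|_{\munabinv(\xi)} = \cV_1 \oplus \cV_0 \oplus \cH \cong q^*TM_\xi \oplus (\kg_\xi)_{\munabinv(\xi)} \oplus (\kg^*)_{\munabinv(\xi)},
\]
which (after reordering summands) is the claim. The main obstacle I foresee is verifying that the $G_\xi$-action on the $q^*TM_\xi$-summand really has the stated form $g \cdot (m, v) = (gm, v)$, trivial on fibers. This reduces to the identity $q \circ L_g = q$ for $g \in G_\xi$, hence $T_{gm}q \circ T_mL_g = T_mq$, which shows that the action $v \mapsto T_mL_g \cdot v$ on $\cV_1$ corresponds under $Tq$ precisely to $(m, w) \mapsto (gm, w)$ on $q^*TM_\xi$. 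With the three equivariance checks in hand, the isomorphism assembles into the form stated in the lemma.
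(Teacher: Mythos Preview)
Your proof is correct and follows essentially the same route as the paper: the paper records the two short exact sequences
\[
0 \to T\munabinv(\xi) \to TM|_{\munabinv(\xi)} \xrightarrow{T\munab} \munabinv(\xi)\times\kg^* \to 0,
\qquad
0 \to \ker(Tq) \to T\munabinv(\xi) \xrightarrow{Tq} TM_\xi \to 0,
\]
identifies $\ker(Tq)\cong \munabinv(\xi)\times\kg_\xi$ via $X\mapsto X^M$, and then combines them. You carry out the same steps, only making the splittings explicit via the invariant metric and spelling out the equivariance checks (which the paper leaves implicit).
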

\begin{proof}
See (5.6) in \cite{GGK} for the case where $G$ is a torus. In general, since $\xi$ is a regular value of $\munab$, 
we have the short exact sequence
\begin{equation} \label{eq SES 1}
0 \to \ker (T\munab) \to TM|_{\munabinv(\xi)} \xrightarrow{T\munab} \munabinv(\xi) \times \kg^* \to 0.
\end{equation}
Now $\ker(T\munab) = T\bigl(\munabinv(\xi)\bigr)$ fits into the short exact sequence
\begin{equation} \label{eq SES 2}
0 \to \ker(Tq) \to T\bigl(\munabinv(\xi)\bigr) \xrightarrow{Tq} TM_{\xi} \to 0.
\end{equation}
Since $\ker(Tq)$ is the bundle of tangent spaces to $G_{\xi}$-orbits, and $\kg_{\xi}$ acts locally freely on $\munabinv(\xi)$ by Lemma \ref{lem loc free}, we have 
\begin{equation} \label{eq ker Tq}
\ker(Tq) \cong \munabinv(\xi) \times \kg_{\xi},
\end{equation}
via the map
\[
(m, X) \mapsto X^M_m,
\]
for $(m, X) \in \munabinv(\xi) \times \kg_{\xi}$.

Combining \eqref{eq SES 1}, \eqref{eq SES 2} and \eqref{eq ker Tq}, we obtain the desired vector bundle isomorphism.
\end{proof}

Because of Lemma \ref{lem TM restr}, Proposition \ref{prop Spinc reg} follows from the following fact.
\begin{lemma}
If the conditions in Proposition \ref{prop Spinc reg} hold, then
 the sub-bundle
\begin{equation} \label{eq gstar gxi bdle}
\munabinv(\xi) \times (\kg^* \oplus \kg_{\xi}) \to \munabinv(\xi)
\end{equation}
of \eqref{eq TM restr}
has a $G_{\xi}$-equivariant $\Spin$-structure. 
\end{lemma}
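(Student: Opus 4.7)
The plan is to construct an explicit $G_\xi$-invariant complex structure on the fibre $\kg^*\oplus\kg_\xi$, thereby producing a $G_\xi$-equivariant $\Spinc$-structure on the trivial bundle \eqref{eq gstar gxi bdle}, and then to verify that the associated equivariant determinant line bundle is trivial. By the remark following Definition \ref{def Spinc reg}, such a $\Spinc$-structure is equivalent to a $G_\xi$-equivariant $\Spin$-structure, which is what is required.

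First I would use the nondegenerate $\Ad(G_\xi)$-invariant bilinear form $B$ to obtain a $G_\xi$-equivariant isomorphism $\kg\cong\kg^*$. Combined with the splitting $\kg=\kg_\xi\oplus V$, this gives
\[
\kg^*\oplus\kg_\xi \;\cong\; \kg_\xi \oplus V \oplus \kg_\xi
\]
as $G_\xi$-representations. Next I would equip the right-hand side with a $G_\xi$-invariant complex structure defined blockwise: on the paired copies $\kg_\xi\oplus\kg_\xi$, the structure $(X,Y)\mapsto(-Y,X)$, which identifies them complex-linearly with the complexification $(\kg_\xi)_\C$ carrying the extended adjoint representation; and on $V$, the given invariant complex structure $J$. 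The standard construction (as in Appendix D of \cite{LM}) then produces a $G_\xi$-equivariant $\Spinc$-structure on the trivial bundle \eqref{eq gstar gxi bdle}.

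It then remains to check that the equivariant determinant line bundle of this $\Spinc$-structure is trivial, i.e.\ that the character $\det_\C\colon G_\xi\to\C^\times$ coming from the $G_\xi$-action on the total complex fibre is trivial. On the $(\kg_\xi)_\C$-factor, $\det_\C(\Ad(g)\otimes 1_\C)=\det_\R\Ad(g)|_{\kg_\xi}$, which equals $1$ by unimodularity of $G_\xi$. On $V$, unimodularity of $G$ combined with that of $G_\xi$, applied to the splitting $\kg=\kg_\xi\oplus V$, forces $\det_\R \Ad(g)|_V=1$, hence $|\det_\C \Ad(g)|_V|=1$. Together with the remark following Definition \ref{def Spinc reg}, this would give the required equivariant $\Spin$-structure on $\cN^\xi$.

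The main obstacle I anticipate is passing from $|\det_\C \Ad|_V|=1$ to genuine triviality of the $U(1)$-valued character $\det_\C \Ad|_V$, which is what is actually needed for equivariant triviality of the determinant line bundle. I expect this final step to require a finer use of the invariant bilinear form $B$ --- for instance, replacing $V$ by the $B$-orthogonal complement $\kg_\xi^{\perp_B}$ when $B|_{\kg_\xi}$ is nondegenerate, so that $B|_V$ pairs $V$ with $V^*$ and contributes cancellations in the determinant --- or a subtler choice of complex structure on the total fibre that exploits the interaction of $B$ and $J$ rather than the naive direct-sum construction above.
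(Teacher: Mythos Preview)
Your approach is exactly the paper's: the same $G_\xi$-equivariant decomposition $\kg^*\oplus\kg_\xi\cong(\kg_\xi\oplus\kg_\xi)\oplus V$ via the invariant bilinear form, the same complex structure (identifying $\kg_\xi\oplus\kg_\xi$ with $(\kg_\xi)_\C$ and using the given $J$ on $V$), and the same appeal to unimodularity of $G$ and $G_\xi$ to control the determinant line.

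The obstacle you single out is precisely the step the paper passes over. From ``the adjoint action by $G_\xi$ on $\kg$, $\kg_\xi$ and hence $V$ has determinant one'' the paper concludes directly that $G_\xi$ acts trivially on $\bigwedge_\C^{\mathrm{top}}(\kg_\xi)_\C\otimes\bigwedge_\C^{\mathrm{top}}V$, offering only the parenthetical analogy with $\SU(n)\hookrightarrow\Spin(2n)$. You are right that for the $(\kg_\xi)_\C$ factor the complex determinant equals the real one, so unimodularity of $G_\xi$ suffices there; and you are also right that on $V$ the equality $\det_\R\Ad|_V=1$ by itself only gives $|\det_\C\Ad|_V|=1$. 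The paper does not supply the additional argument you anticipate needing, nor does it invoke the bilinear form $B$ on $V$ in the way you suggest. So you have faithfully reconstructed the paper's proof and have in fact been more careful than the paper about this point; whatever justification closes the gap (or whatever tacit hypothesis is being used in the intended applications, where in Example~\ref{ex reg zero} one has $V=0$ and the issue is vacuous) is not spelled out in the paper's argument.
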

\begin{proof}
Using the given $\Ad(G_{\xi})$-invariant, nondegenerate bilinear form on $\kg$, and the subspace $V \subset \kg$, we obtain an $\Ad(G_{\xi})$-equivariant isomorphism
\begin{equation} \label{eq decomp gstar gxi}
\kg^* \oplus \kg_{\xi} \cong  (\kg_{\xi} \oplus \kg_{\xi}) \oplus   V.
\end{equation}
Identifying $\kg_{\xi} \oplus \kg_{\xi} \cong \kg_{\xi} + i\kg_{\xi} = (\kg_{\xi})_{\C}$, and using the given complex structure on $V$, one gets an $\Ad(G_{\xi})$-invariant complex structure on \eqref{eq decomp gstar gxi}. This induces a $G_{\xi}$-equivariant $\Spinc$-structure on the vector bundle \eqref{eq gstar gxi bdle}, with determinant line bundle
\begin{equation} \label{eq det gxi}
\munabinv(\xi) \times {\bigwedge}_{\C}^{\topp} \bigl((\kg_{\xi})_{\C} \oplus V\bigr) \to \munabinv(\xi).
\end{equation}
Since $G$ and $G_{\xi}$ are unimodular, the adjoint action by $G_{\xi}$ on $\kg$, $\kg_{\xi}$ and hence $V$, has determinant one. Therefore, $G_{\xi}$ acts trivially on 
\[
{\bigwedge}_{\C}^{\topp} (\kg_{\xi})_{\C} \otimes {\bigwedge}_{\C}^{\topp} V   = {\bigwedge}_{\C}^{\topp} \bigl((\kg_{\xi})_{\C} \oplus V\bigr), 
\] 
so that the determinant line  bundle \eqref{eq det gxi} is equivariantly trivial. Hence the $\Spinc$-structure on $\eqref{eq gstar gxi bdle}$ is induced by a $G$-equivariant $\Spin$-structure. (Compare this with the fact that the natural embedding of $\U(n)$ into $\Spinc(2n)$ maps $\SU(n)$ into $\Spin(2n)$.)
\end{proof}


 \subsection{Induced connections and momentum maps}  \label{sec ind nabla} 

In the rest of this section, we fix a slice $N \subset M$ and a $K$-equivariant $\Spinc$-structure $P_N \to N$ as in Proposition \ref{prop slice spinc}. 

To prove Lemma \ref{lem Nred Mred}, we will choose the connection $\nabla^M$ in such a way that the $\Spinc$-momentum maps are related as in \eqref{eq mu N M 1}.
Let $\nabla^N$ be a $K$-equivariant Hermitian connection on the determinant line bundle $L^N \to N$.  We will use the connection $\nabla^M$ on $L^M = G\times_K L^N$ induced by $\nabla^N$, as discussed in
Section 3.1 in \cite{HochsDS}. We briefly review the construction of this connection.

Let $p_N: G \times N \to N$ be projection onto the second factor.
For a $K$-invariant section $s \in \Gamma^{\infty}(G\times N, p_N^*L^N)^K$, one has the section $\sigma \in \Gamma^{\infty}(L^M)$ given by
\begin{equation} \label{eq s sigma}
\sigma[g, n] = [g, s(g, n)].
\end{equation}
(Here $s$ is viewed as a map $G\times N \to L^N$.) For such an $s$, and for $g \in G$ and $n \in N$, write
\[
s_g(n) := s(g, n) =: s^n(g) \quad \in L^N_n.
\]
This defines $s_g \in \Gamma^{\infty}(L^N)$ and $s_n \in C^{\infty}(G, L^N_n) \cong C^{\infty}(G)$.

Let $q: G\times N \to M$ be the quotient map. Note that
\[
q^*L^M \cong p_N^*L^N \cong G\times L^N \to G\times N,
\]
and that under this isomorphism, $q^*\sigma$ corresponds to $s$. For $X \in \kg$, $n \in N$ and $v \in T_nN$, one has
\[
Tq(X, v) \in T_{[g, n]}M.
\]
Write $X = X_{\kk} + X_{\kp}$ according to the decomposition $\kg = \kk \oplus \kp$. Then the connection $\nabla^M$ is defined by the properties that it is $G$-invariant, and satisfies
\begin{equation} \label{eq def nabla M}
\bigl(\nabla^M_{Tq(X, v)}\sigma\bigr)[e, n] = \bigl[ e,  (\nabla^N_v s_e)(n) + X(s^n)(e) + 2\pi i \mu^{\nabla^N}_{X_{\kk}}(n)s(e,n) \bigr],
\end{equation}
for $X \in \kg$, $n \in N$, $v \in T_nN$, and $\sigma$ and $s$ as above.

Let $\munabN: N \to \kk^*$ be the $\Spinc$-momentum map associated to $\nabla^N$, and let 
$\munabM: M \to \kg^*$ be the $\Spinc$-momentum map for the induced connection $\nabla^M$.
Lemma \ref{lem Nred Mred} follows directly from the relation \eqref{eq mu N M 1} between $\munabN$ and $\munabM$, which holds because of the fourth point in Lemma \ref{lem L N M} and the following fact.
\begin{lemma} \label{lem mu N M}
For all $n \in N$, one has $\munabM(n) \in \kk^*$. 
\end{lemma}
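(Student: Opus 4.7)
The plan is to establish $\munabM(n) \in \kk^*$ by showing that $\munabM_X(n) = 0$ for every $X \in \kp$, viewing $\kk^*$ as the annihilator of $\kp$ in $\kg^*$. Unwinding the defining identity $2\pi i\, \munabM_X = \nabla^M_{X^M} - \cL^{L^M}_X$, this amounts to checking that the scalar endomorphism $\nabla^M_{X^M} - \cL^{L^M}_X$ annihilates $\sigma[e, n]$ for some test section $\sigma$ of $L^M$ which does not vanish at $[e, n]$.

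I would fix a $K$-invariant section $s \in \Gamma^{\infty}(G \times N, p_N^* L^N)^K$ with $s(e, n) \neq 0$ and let $\sigma$ be the corresponding section of $L^M$ given by \eqref{eq s sigma}. Since $X^M_{[e, n]} = Tq(X, 0)$, formula \eqref{eq def nabla M} with $v = 0$ reads
\[
(\nabla^M_{X^M} \sigma)[e, n] = \bigl[e,\, X(s^n)(e) + 2\pi i\, \munabN_{X_\kk}(n)\, s(e, n)\bigr],
\]
which, for $X \in \kp$, collapses to $[e,\, X(s^n)(e)]$ because $X_\kk = 0$. A direct computation using the $G$-action $h \cdot [g, v] = [hg, v]$ on $L^M = G \times_K L^N$ then gives
\[
(\cL^{L^M}_X \sigma)[e, n] = \ddt \exp(-tX) \cdot \sigma[\exp(tX), n] = \ddt [e,\, s(\exp(tX), n)] = [e,\, X(s^n)(e)],
\]
the final equality being the definition of $X(s^n)(e)$. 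Subtracting, $2\pi i\, \munabM_X(n)\, s(e, n) = 0$, and non-vanishing of $s(e, n)$ yields $\munabM_X(n) = 0$.

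I do not anticipate a substantive obstacle here: the $X(s^n)(e)$-contribution arises symmetrically on both sides as the derivative along the canonical path $t \mapsto [\exp(tX), n]$, and the only asymmetry is the extra $\munabN_{X_\kk}$-term in \eqref{eq def nabla M}, which vanishes exactly when $X \in \kp$. Conceptually, the correction term in the definition of $\nabla^M$ was chosen so that the $\kk$-component of $\munabM|_N$ reproduces $\munabN$ (consistent with Lemma~\ref{lem L N M}.3), with the consequence that the $\kp$-component automatically vanishes, which is precisely the content of the lemma.
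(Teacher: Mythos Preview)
Your proposal is correct and follows essentially the same approach as the paper: compute $\nabla^M_{X^M}\sigma$ at $[e,n]$ via \eqref{eq def nabla M}, compute $\cL^{L^M}_X\sigma$ at $[e,n]$ directly from the $G$-action on $L^M=G\times_K L^N$, observe that the $X(s^n)(e)$ contributions cancel, and conclude $\munabM_X|_N=\munabN_{X_{\kk}}$. The paper carries out the computation for arbitrary $X\in\kg$ and reads off the $\kp$-vanishing at the end, whereas you specialise to $X\in\kp$ from the outset; this is a purely cosmetic difference.
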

Recall that we consider $\kk^*$ as a subspace of $\kg^*$ by identifying it with the annihilator of $\kp$.
\begin{proof}
As in \eqref{eq s sigma}, let $s \in \Gamma^{\infty}(G\times N, p_N^*L^N)^K$, and let $\sigma \in \Gamma^{\infty}(L^M)$ be the associated section of $L^M$. Let $X \in \kg$, and $n \in N$. Then one has
%
\[
\begin{split}
(\cL^{L^M}_X \sigma)[e,n] &= \ddt \exp(tX)[\exp(-tX), s(\exp(-tX), n)] \\
	& = \ddt [e, s(\exp(-tX), n)] \\
	&= [e, X(s^n)(e)] \quad \in L^M_{[e,n]}.
\end{split}
\]
Since $Tq(X, 0) = X^M$ in \eqref{eq def nabla M}, one therefore has
\[
(\nabla^M_{X^M}\sigma)[e,n] = (\cL^{L^M}_X \sigma)[e,n] + 2\pi i \mu^{\nabla^N}_{X_{\kk}}(n) \sigma[e,n].
\]
Here $X = X_{\kk} + X_{\kp}$ according to the  decomposition $\kg = \kk \oplus \kp$. The claim follows.
\end{proof}

%
%
%

\subsection{$\Spinc$-reduction for fibred products} \label{sec fibred} 

We now turn to a proof of Proposition \ref{prop Spinc fibred}.
For any group $H$ acting on a manifold $Y$, 
we use the notation $q_H$ for the quotient map $Y \to Y/H$. If $H < K$, we will write $\kp_Y$ for the trivial bundle $Y \times \kp \to Y$, on which $H$ acts via the adjoint representation on $\kp$.

\medskip
\noindent
\emph{Proof of Proposition \ref{prop Spinc fibred}.}
Let $\xi \in \kk^*$ be a regular value of $\munabN$. 
Since \eqref{eq mu N M 1} holds, we have
\begin{equation} \label{eq decomp mu Gxi}
\munabMinv(G\xi) = G\times_K \munabNinv(K\xi).
\end{equation}
Because of this relation, it will be convenient to initially consider the restriction of $TM$ to $\munabMinv(G\xi)$, rather than to $\munabMinv(\xi)$. 
Let
\begin{equation} \label{eq TN restr}
TN|_{\munabNinv(K\xi) }= q_K^* TN_{\xi} \oplus \cN^{K\xi}_{N}
\end{equation}
be a $K$-invariant splitting. 
By Lemma \ref{lem normal N M} below, 
we have a $G$-invariant splitting
\[
TM|_{\munabinv(G\xi)} = q_G^*TM_{\xi} \oplus \cN^{G\xi}_M,
\]
with 
\[
\cN^{G\xi}_M =  \bigl(G\times_K \cN^{K\xi}_{N} \bigr) \oplus  \bigl(  G\times_K\kp_{\munabNinv(K\xi)} \bigr).
\]

By Lemma \ref{lem TN restr}, the vector bundles
\[
G\times_K \cN^{K\xi}_N
\]
and 
\[
G\times_K \kp_{\munabNinv(K\xi)} 
\]
over $\munabMinv(G\xi) = G\times_K \munabNinv(K\xi)$
have  $G$-equivariant $\Spin$-structures. 
By Lemma \ref{lem 2 out of 3} and Remark \ref{rem G 2 out of 3}, these induce a $G$-equivariant $\Spinc$-structure on $\cN^{G\xi}_M$ with equivariantly trivial determinant line bundle, i.e.\ a $G$-equivariant $\Spin$-structure. 
%
Restricting all bundles from $\munabMinv(G\xi)$ to $\munabMinv(\xi)$, and group actions from $G$ to $G_{\xi}$, we  obtain a $G_{\xi}$-equivariant splitting
\[
TM|_{\munabinv(\xi)} = q_{G_{\xi}}^*TM_{\xi} \oplus \cN^{\xi}_M,
\]
where $\cN^{\xi}_M$ has a $G_{\xi}$-equivariant $\Spin$-structure. 
\hfill $\square$


It remains to prove Lemmas \ref{lem normal N M} and \ref{lem TN restr}, used in the proof of Proposition \ref{prop Spinc fibred}.

\begin{lemma} \label{lem normal N M}
One has
\begin{equation} \label{eq restr TM lem}
TM|_{\munabinv(G\xi)} = q_G^*TM_{\xi} \oplus \cN^{G\xi}_M,
\end{equation}
with 
\[
\cN^{G\xi}_M =  \bigl(G\times_K \cN^{K\xi}_{N} \bigr) \oplus  \bigl(  G\times_K\kp_{\munabNinv(K\xi)} \bigr),
\]
and $\cN^{K\xi}_{N}$ as in \eqref{eq TN restr}.
\end{lemma}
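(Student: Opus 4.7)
The plan is to derive the decomposition directly from the slice decomposition \eqref{eq decomp TM 1}, namely $TM = G\times_K(TN \oplus \kp_N)$, by restricting everything to $\munabMinv(G\xi)$ and then using the given splitting \eqref{eq TN restr} of $TN|_{\munabNinv(K\xi)}$. The identity \eqref{eq decomp mu Gxi}, i.e.\ $\munabMinv(G\xi) = G\times_K \munabNinv(K\xi)$, makes the restriction clean: it preserves the fibred-product structure so that restriction and the functor $G\times_K(\relbar)$ commute.

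Concretely, I would first invoke \eqref{eq decomp TM 1} and restrict to the preimage of the orbit, obtaining the $G$-invariant splitting
\[
TM|_{\munabMinv(G\xi)} = G\times_K \bigl( TN|_{\munabNinv(K\xi)} \oplus \kp_{\munabNinv(K\xi)}\bigr).
\]
Next I would plug in the assumed $K$-invariant splitting \eqref{eq TN restr} of $TN|_{\munabNinv(K\xi)}$ and distribute $G\times_K(\relbar)$ over the direct sum to get
\[
TM|_{\munabMinv(G\xi)} = \bigl(G\times_K q_K^*TN_{\xi}\bigr) \oplus \bigl( G\times_K \cN^{K\xi}_N\bigr) \oplus \bigl( G\times_K \kp_{\munabNinv(K\xi)}\bigr).
\]
The remaining task is then to identify the first summand with $q_G^*TM_{\xi}$.

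For that identification I would use Lemma \ref{lem Nred Mred}: the quotient map $q_G: \munabMinv(G\xi) \to M_{\xi}$ factors through the projection $[g,n] \mapsto K\cdot n$ from $G\times_K \munabNinv(K\xi)$ onto $N_{\xi} \cong M_{\xi}$, and the latter projection is precisely $G\times_K q_K$. Pulling $TM_{\xi} \cong TN_{\xi}$ back along this factorisation gives $q_G^*TM_{\xi} \cong G\times_K q_K^*TN_{\xi}$ as $G$-equivariant bundles, which is exactly what is needed. The defining equation of $\cN^{G\xi}_M$ in the statement of the lemma then holds by construction.

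The only step that requires care — and the one I view as the main obstacle — is verifying that the $G$-equivariant identification $G\times_K q_K^*TN_{\xi} \cong q_G^*TM_{\xi}$ really is the restriction of the canonical isomorphism from \eqref{eq decomp TM 1}, i.e.\ that the slice decomposition of $TM$ is compatible with the quotient maps on both sides of Lemma \ref{lem Nred Mred}. This compatibility ultimately comes from \eqref{eq mu N M 1} (the reason \eqref{eq decomp mu Gxi} holds) together with the fact that the $K$-action on $TN$ used in the slice theorem is the tangent lift of the $K$-action on $N$; once that is spelled out, the identification is forced and the lemma follows.
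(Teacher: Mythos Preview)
Your proposal is correct and follows essentially the same route as the paper: restrict the slice decomposition \eqref{eq decomp TM 1} to $\munabMinv(G\xi)$ via \eqref{eq decomp mu Gxi}, insert the splitting \eqref{eq TN restr}, and identify $G\times_K q_K^*TN_{\xi}$ with $q_G^*TM_{\xi}$ using Lemma \ref{lem Nred Mred}. The paper carries this out in a single three-line display without belabouring the last identification, but the content is the same.
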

\begin{proof}
%
Because of \eqref{eq decomp TM 1} and \eqref{eq decomp mu Gxi}, we see that
\[
\begin{split}
TM|_{\munabMinv(G\xi)} &= G\times_K \bigl( TN|_{\munabNinv(K\xi)} \oplus \kp_{\munabNinv(K\xi)} \bigr) \\
	&= G\times_K \bigl( q_K^*TN_{\xi} \oplus \cN^{K\xi}_N \oplus \kp_{\munabNinv(K\xi)} \bigr) \\
	&= q_G^*TM_{\xi} \oplus \bigl(G\times_K \cN^{K\xi}_N\bigr) \oplus \bigl( G\times_K \kp_{\munabNinv(K\xi)} \bigr).
\end{split}
\]
\end{proof}

\begin{lemma} \label{lem TN restr} 
For a choice of the bundle $\cN^{K\xi}_N$ as in \eqref{eq TN restr},  and hence for any such bundle,  the vector bundles
\[
G\times_K \cN^{K\xi}_N 
\]
and
\[
G\times_K \kp_{\munabNinv(K\xi)} 
\]
over $\munabMinv(G\xi) = G\times_K \munabNinv(K\xi)$ have  $G$-equivariant $\Spin$-structures. 
\end{lemma}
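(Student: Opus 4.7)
The plan is to handle the two summands separately. For $G\times_K \kp_{\munabNinv(K\xi)}$, everything is already in place: the lift $\widetilde{\Ad}: K \to \Spin(\kp)$ from \eqref{eq tilde Ad} endows the trivial bundle $\kp_N$ with the $K$-equivariant $\Spin$-structure $N\times \Spin(\kp)$ of \eqref{eq Spin pN}. Restricting to $\munabNinv(K\xi)$ and then applying the standard fibred product construction (Lemma 3.7 in \cite{HochsDS}, whose $\Spin$-version is clear from the fact that the determinant line bundle remains equivariantly trivial), one obtains the $G$-equivariant $\Spin$-structure $G\times_K \bigl(\munabNinv(K\xi) \times \Spin(\kp)\bigr)$ on $G\times_K \kp_{\munabNinv(K\xi)}$.

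For $G\times_K \cN^{K\xi}_N$, the first step is to check that $\xi$ is not merely a regular value but a $\Spinc$-regular value of $\munabN$. Since $K$ is compact, so is $K_\xi$, and both are unimodular; an $\Ad(K)$-invariant inner product on $\kk$ gives an $\Ad(K_\xi)$-invariant nondegenerate bilinear form, positive definite on $\kk_\xi$; taking $V := \kk_\xi^\perp$ yields the required $\Ad(K_\xi)$-invariant complement; and because $\xi$ is strongly elliptic, Example \ref{ex reg se} supplies an $\Ad(K_\xi)$-invariant complex structure on $V$. Hence Proposition \ref{prop Spinc reg} applies, and we obtain a $K_\xi$-invariant splitting
\[
TN|_{\munabNinv(\xi)} = q_{K_\xi}^* TN_\xi \oplus \cN^\xi_N
\]
with a $K_\xi$-equivariant $\Spin$-structure on $\cN^\xi_N$.

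Next, extend from the slice $\munabNinv(\xi)$ to the full orbit $\munabNinv(K\xi) \cong K\times_{K_\xi} \munabNinv(\xi)$ by the $K$-action: setting $\cN^{K\xi}_N := K\times_{K_\xi} \cN^\xi_N$ gives a $K$-invariant subbundle of $TN|_{\munabNinv(K\xi)}$ complementary to $q_K^* TN_\xi$, which inherits a $K$-equivariant $\Spin$-structure from the fibred product. One further application of $G\times_K(-)$ then equips
\[
G\times_K \cN^{K\xi}_N \;=\; G\times_{K_\xi} \cN^\xi_N
\]
with a $G$-equivariant $\Spin$-structure. For the clause ``hence for any such bundle'', note that any two complements to $q_K^*TN_\xi$ are canonically isomorphic as $K$-equivariant bundles to the quotient $TN|_{\munabNinv(K\xi)}/q_K^*TN_\xi$, so the $\Spin$-structure transfers to any other choice.

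I expect the only real friction to be bookkeeping: verifying all four conditions of Proposition \ref{prop Spinc reg} for $\munabN$, and then chasing the $K_\xi$-equivariance through the tower of fibred products $K\times_{K_\xi}(-) \leadsto G\times_K(-)$ without losing compatibility of the $\Spin$-structure. These are routine once one invokes the explicit construction in Section 3.1 of \cite{Plymen} and the transitivity of induction, so no serious obstacle appears.
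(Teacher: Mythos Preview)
Your proposal is correct and follows essentially the same approach as the paper: invoke Proposition \ref{prop Spinc reg} via Example \ref{ex reg se} to obtain the $K_\xi$-equivariant $\Spin$-structure on $\cN^\xi_N$, then induce up through the tower $K_\xi \subset K \subset G$ (the paper writes $\cN^{K\xi}_N := K\cdot \cN^\xi$ and identifies it with $K\times_{K_\xi}\cN^\xi_N$, then passes directly to $G\times_{K_\xi}\cN^\xi_N$, which is the same as your two-step induction by transitivity); the $\kp$-summand is handled identically via the lift $\widetilde{\Ad}$. Your explicit treatment of the ``hence for any such bundle'' clause is a small addition beyond what the paper spells out.
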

\begin{proof}
Since $K$ is compact, and $\xi$ is a regular value of $\munabN$, Proposition \ref{prop Spinc reg}
and Example \ref{ex reg se} imply that
\[
TN|_{\munabNinv(\xi)} = q_{K_{\xi}}^* TN_{\xi} \oplus \cN^{\xi}_{N}, 
\]
where $\cN^{\xi}_{N}$ has a  $K_{\xi}$-equivariant $\Spin$-structure $P^{\xi}_N$. 
Set
\[
\cN^{K\xi}_N := K\cdot \cN^{\xi}.
\]
Then we have a  $K$-equivariant vector bundle isomorphism
\[
K \times_{K_{\xi}} \cN^{\xi}_N \cong \cN^{K\xi}_N,
\]
given by $[k, v] \mapsto T_n k(v)$, for $n \in \munabNinv(\xi)$, $v \in (\cN^{\xi}_{N})_n$ and $k \in K$. This extends to a $G$-equivariant isomorphism
\begin{equation} \label{eq fibred normal}
G \times_{K_{\xi}} \cN^{\xi}_N \cong G\times_K \cN^{K\xi}_N
\end{equation}
Now
\[
P_N^{G\xi} := G\times_{K_{\xi}} P_N^{\xi} \to G\times_{K_{\xi}} \munabNinv(\xi) \cong \munabMinv(G\xi)
\]
defines a $\Spin$-structure on \eqref{eq fibred normal}. 

Furthermore, since the adjoint action by $K$ on $\kp$ lifts to $\Spin(\kp)$, 
the vector bundle $\kp_{\munabNinv(K\xi)}$ has a $K$-equivariant $\Spin$-structure 
\[
\munabNinv(K\xi) \times \Spin(\kp).
\]
As above, this induces a $G$-equivariant $\Spin$-structure on 
\[
G\times_K \kp_{\munabNinv(K\xi)} \to \munabMinv(G\xi).
\]
\end{proof}

\subsection{$\Spinc$-structures on $N_{\xi}$ and $M_{\xi}$} \label{sec PNxi PMxi}

The last statement from Section \ref{sec Spinc red slice} we prove is Proposition \ref{prop PNxi PMxi}. 
As before, let $\xi \in \kk^*$ be a regular value of $\munabN$, and let 
let the $\Spinc$-structure $P_N \to N$ be as in Proposition \ref{prop slice spinc}.
To prove Proposition \ref{prop PNxi PMxi}, we must show that the $\Spinc$-structures induced on $N_{\xi}$ and $M_{\xi}$, induced by $P_N$ and $P_M$ respectively, via Propositions \ref{prop Spinc reg} and \ref{prop Spinc fibred}, coincide. 

We first give a slightly different description of $\Spinc$-structures induced on reduced spaces from the expression \eqref{eq PMxi}.
\begin{lemma} \label{lem spinc red orbit}
In the setting
 of Lemma \ref{lem Spinc normal}, the $\Spinc$-structure $P_{M_{\xi}}$ induced on $M_{\xi}$ equals
\[
P_{M_{\xi}}  = \Destab_{\cN^{G\xi}}\bigl(P_M|_{\munabinv(G\xi)} \bigr)/G,
\]
where $\cN^{G\xi} \to \munabinv(G\xi)$ is a vector bundle with the property of $\cN^{G\xi}_M$ in \eqref{eq restr TM lem}, and with a $G$-equivariant $\Spin$-structure. 
\end{lemma}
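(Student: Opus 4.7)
The plan is to reduce the description \eqref{eq PMxi} of $P_{M_\xi}$, which involves destabilising $P_M|_{\munabinv(\xi)}$ and then quotienting by $G_\xi$, to the analogous description over the full coadjoint orbit $G\xi$, via the $G$-equivariant diffeomorphism
\[
G \times_{G_\xi} \munabinv(\xi) \xrightarrow{\sim} \munabinv(G\xi), \qquad [g, m] \mapsto g \cdot m.
\]
Under this identification, the restriction of any $G$-equivariant geometric structure on $M$ to $\munabinv(G\xi)$ is naturally $G$-equivariantly identified with the fibred product over $G_\xi$ of $G$ with its $G_\xi$-equivariant restriction to $\munabinv(\xi)$; in particular,
\[
TM|_{\munabinv(G\xi)} = G \times_{G_\xi} TM|_{\munabinv(\xi)},
\]
\[
P_M|_{\munabinv(G\xi)} = G \times_{G_\xi} P_M|_{\munabinv(\xi)},
\]
and $q_G^* TM_\xi = G \times_{G_\xi} q_{G_\xi}^* TM_\xi$, where $q_{G_\xi}\colon \munabinv(\xi) \to M_\xi$.

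I would then take $\cN^{G\xi} := G \times_{G_\xi} \cN^\xi$, where $\cN^\xi$ is the bundle from Definition \ref{def Spinc reg}. Applying $G \times_{G_\xi}(-)$ to the $G_\xi$-invariant splitting of $TM|_{\munabinv(\xi)}$ yields the required $G$-invariant splitting
\[
TM|_{\munabinv(G\xi)} = q_G^* TM_\xi \oplus \cN^{G\xi}.
\]
The $G_\xi$-equivariant $\Spin$-structure on $\cN^\xi$ induces a $G$-equivariant $\Spinc$-structure on $\cN^{G\xi}$ via the fibred product construction recalled before Lemma \ref{lem stab fibred}; since its determinant line bundle is $G \times_{G_\xi}$ of an equivariantly trivial line bundle, it is again equivariantly trivial, so the induced $\Spinc$-structure is genuinely a $G$-equivariant $\Spin$-structure, as required by the statement.

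Next I would apply the second clause of Lemma \ref{lem stab fibred} with $H = G_\xi$, $E = \cN^\xi$, and $E \oplus F = TM|_{\munabinv(\xi)}$ carrying the $\Spinc$-structure $P_M|_{\munabinv(\xi)}$. This yields
\[
\Destab_{\cN^{G\xi}}\bigl(P_M|_{\munabinv(G\xi)}\bigr) = G \times_{G_\xi} \Destab_{\cN^\xi}\bigl(P_M|_{\munabinv(\xi)}\bigr).
\]
Quotienting both sides by $G$ and using the natural identification $(G \times_{G_\xi} X)/G \cong X/G_\xi$ for any $G_\xi$-space $X$, the right hand side becomes $\Destab_{\cN^\xi}(P_M|_{\munabinv(\xi)})/G_\xi$, which equals $P_{M_\xi}$ by \eqref{eq PMxi}. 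Independence of the choice of $\cN^{G\xi}$ with the required properties follows from the uniqueness clause of Lemma \ref{lem 2 out of 3}, since any two such bundles are $G$-equivariantly isomorphic to the quotient $TM|_{\munabinv(G\xi)}/q_G^* TM_\xi$.

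The main obstacle is mostly bookkeeping: one has to verify carefully that restriction of $G$-equivariant bundles to $\munabinv(G\xi)$ really does commute with the fibred product, and that Lemma \ref{lem stab fibred} applies, which only requires $G_\xi$ to be a closed subgroup of $G$ (this holds since $G_\xi$ is the stabiliser of $\xi$ under the continuous coadjoint action). Beyond these identifications and the compatibility of destabilisation with fibred products, no new conceptual input is needed.
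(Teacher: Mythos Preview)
Your argument is correct and follows essentially the same route as the paper: both proofs pass through the identification $\munabinv(G\xi)\cong G\times_{G_\xi}\munabinv(\xi)$, set $\cN^{G\xi}=G\times_{G_\xi}\cN^\xi$, and invoke the compatibility of destabilisation with the fibred product (Lemma \ref{lem stab fibred}) together with $(G\times_{G_\xi}X)/G\cong X/G_\xi$ to match the orbit description with \eqref{eq PMxi}. The paper is slightly terser about the identifications you spell out, and observes directly that $G\times_{G_\xi}P_{\cN^\xi}$ is a $G$-equivariant $\Spin$-structure rather than arguing via triviality of the determinant line bundle, but these are presentational differences only.
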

\begin{proof}
By \eqref{eq PMxi} and Lemma \ref{lem stab fibred}, we have
\[
\begin{split}
P_{M_{\xi}}  &= \Destab_{\cN^{\xi}}\bigl(P_M|_{\munabinv(\xi)} \bigr)/G_{\xi} \\
	&= \bigl(G\times_{G_{\xi}}   \Destab_{\cN^{\xi}}\bigl(P_M|_{\munabinv(\xi)} \bigr)   \bigr)/G \\
&= \Destab_{G\times_{G_{\xi}}\cN^{\xi}}\bigl( G\times_{G_{\xi}}\bigl(P_M|_{\munabinv(\xi)}\bigr) \bigr)/G.
\end{split}
\]
Here $\cN^{\xi}\to \munabinv(\xi)$ has a $G_{\xi}$-equivariant $\Spin$-structure $P_{\cN^{\xi}}$. 

Similarly to the proof of Lemma \ref{lem TN restr}, set $\cN^{G\xi} := G\cdot \cN^{\xi}$. Then
\[
G\times_{G_{\xi}}\cN^{\xi} \cong \cN^{G\xi}.
\]
The left hand side has the $G$-equivariant $\Spin$-structure $G\times_{G_{\xi}}P_{\cN^{\xi}}$. 
Since also
\[
G\times_{G_{\xi}}\bigl(P_M|_{\munabinv(\xi)}\bigr) \cong P_M|_{\munabinv(G\xi)}.
\]
the claim follows.
\end{proof}

\noindent \emph{Proof of Proposition \ref{prop PNxi PMxi}.}
Let $P_{N_{\xi}}\to N_{\xi}$ be the $\Spinc$-structure on $N_{\xi}$ induced by $P_N$ because of Proposition \ref{prop Spinc reg}, and let $P_{M_{\xi}}\to M_{\xi}$ be the $\Spinc$-structure on $M_{\xi}$ induced by $P_M$ because of Proposition \ref{prop Spinc fibred}.
We saw in Proposition \ref{prop slice spinc} that
\[
P_M = G\times_K \Stab_{\kp_N}(P_N).
\]
Let $\cN^{G\xi}_M$ and $\cN^{K\xi}_N$ be as in Lemma \ref{lem normal N M}. Then, by Lemma \ref{lem spinc red orbit},
\[
\begin{split}
P_{M_{\xi}} &=  \Destab_{\cN^{G\xi}_M}\bigl(P_M|_{\munabMinv(G\xi)} \bigr)/G \\
	&=  \Destab_{\cN^{G\xi}_M}\bigl(   \bigl(G\times_K \Stab_{\kp_N}(P_N) \bigr)    |_{\munabMinv(G\xi)} \bigr)/G \\
	&=  \Destab_{\cN^{G\xi}_M}\bigl(  \Stab_{G\times_K \kp_N}
	 \bigl(G\times_K (P_N|_{\munabNinv(K\xi)}) \bigr)     \bigr)/G \\
	&=  \Destab_{G\times_K \cN^{K\xi}_N}\bigl( 
	 G\times_K (P_N|_{\munabNinv(K\xi)})     \bigr)/G.
\end{split}
\]
In the third equality, we have used  the first point of Lemma \ref{lem stab fibred} and \eqref{eq decomp mu Gxi}. In the last equality, we applied  Lemmas \ref{lem destab stab} and \ref{lem normal N M}. By the second point of Lemma \ref{lem stab fibred}, we conclude that
\[
\begin{split}
P_{M_{\xi}} &=    G\times_K \bigl( \Destab_{ \cN^{K\xi}_N} (P_N|_{\munabNinv(K\xi)})    \bigr)  /G \\
 &=  \bigl( \Destab_{ \cN^{K\xi}_N} (P_N|_{\munabNinv(K\xi)})    \bigr)  /K \\
 &= P_{N_{\xi}},
\end{split}
\]
by Lemma \ref{lem spinc red orbit} (applied to the action by $K$ on $N$).
\hfill $\square$

\subsection{Quantisation commutes with induction}\label{sec [Q,Ind]=0}

Together with the constructions of $\Spinc$-structures proved so far in this section, the quantisation commutes with induction techniques of \cite{HochsDS, HochsPS} allow us to deduce Theorem \ref{thm [Q,R]=0 cocpt} from Paradan and Vergne's result, Theorem \ref{thm [Q,R]=0 cpt}. 

We now suppose that $M/G$, and hence $N$ is \emph{compact}.
The connections $\nabla^N$ and $\nabla^M$ induce
 Dirac operators on $N$ and $M$, which can be used to define the quantisations of these manifolds. After the  quantisation commutes with induction results of \cite{HochsDS} (in the symplectic setting) and \cite{HochsPS} (in the presymplectic setting), the following $\Spinc$-version of this principle is perhaps the most natural and general.
\begin{theorem}[$\Spinc$-quantisation commutes with induction] \label{thm [Q,Ind]=0}
In the setting of Proposition \ref{prop slice spinc},
the Dirac induction map $\DInd_K^G$ maps the $\Spinc$-quantisation of $N$ to the $\Spinc$-quantisation of $M$:
\[
\DInd_K^G\bigl( Q^{\Spinc}_{K}(N)\bigr) = Q^{\Spinc}_{G}(M)_{\red} \quad \in K_*(C^*_{\red}G).
\]
\end{theorem}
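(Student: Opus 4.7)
The plan is to adapt the $KK$-theoretic quantisation-commutes-with-induction arguments of \cite{HochsDS, HochsPS} from the (pre)symplectic to the $\Spinc$ setting. The key input that makes this transfer possible is Proposition \ref{prop slice spinc}, which gives the slice decomposition $M \cong G\times_K N$ together with $P_M = G\times_K\Stab_{\kp_N}(P_N)$. Combining this with the tangent bundle decomposition \eqref{eq decomp TM 1}, the spinor bundle decomposes as $\cS_M \cong G\times_K(\cS_N \,\hat\otimes\, \cS_{\kp})$, where $\cS_N$ is the spinor bundle of $P_N$ on $N$, and $\cS_{\kp}$ is the spinor module of the $\Spin$-structure \eqref{eq Spin pN} on $\kp_N$.

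Next, using the Levi--Civita connection on $TM$ and the induced connection $\nabla^M$ on $L^M$ from \eqref{eq def nabla M}, I would show that $D_M$ equals, up to a bounded zeroth-order operator, a sum $D^{\mathrm{vert}} + D^{\mathrm{hor}}$, where $D^{\mathrm{vert}}$ acts as $D_N \,\hat\otimes\, 1$ along the $N$-direction, and $D^{\mathrm{hor}}$ acts on $G\times_K\cS_{\kp}$ as the $G$-invariant $\Spin$-Dirac operator on $G/K$, twisted by $\cS_N$. At the $KK$-theoretic level, this identifies the class $[D_M] \in KK^G(C_0(M),\C)$ as the unbounded Kasparov product of the class of $D_N$ in $KK^K(C(N),\C)$ with the Dirac class $[d_{G/K}]$ on $G/K$ that defines $\DInd_K^G$ via Connes--Kasparov.

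Applying the analytic assembly map and using its compatibility with the Kasparov product and with Dirac induction, exactly as in the main technical step of \cite{HochsDS} (respectively \cite{HochsPS}), yields
\[
\Gind(D_M)_{\red} = \DInd_K^G\bigl(\Kind(D_N)\bigr) \quad \in K_*(C^*_{\red}G),
\]
which is the claim. The $\Spinc$ hypothesis enters only through Proposition \ref{prop slice spinc}, which supplies the multiplicative decomposition of spinor bundles that replaces the (pre)symplectic slice construction used in the references above.

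The principal obstacle will be the precise verification of the Kasparov product decomposition in the second step: one must check that the Clifford connection on $\cS_M$ induced globally via $\nabla^M$ and the Levi--Civita connection differs from the product connection built from $\nabla^N$ on $\cS_N$ and the canonical connection on $G\times_K\cS_{\kp}$ only by a uniformly bounded zeroth-order term (hence does not alter the $KK$-class), and that the gradings on $\cS_M$ agree with the graded tensor product grading on $\cS_N \,\hat\otimes\, \cS_{\kp}$. Once these compatibilities are in place, the remainder of the argument is a formal consequence of functoriality of the assembly map under the Kasparov product.
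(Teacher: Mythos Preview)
Your proposal is correct and follows essentially the same route as the paper. The paper's proof simply cites the commutative diagram from \cite{HochsDS, HochsPS} relating the $K$-homological induction map $\KInd_K^G$ to Dirac induction via the assembly maps, and then invokes Section~6 of \cite{HochsDS} for the equality $\KInd_K^G[D_N] = [D_M]$, observing that those arguments are already phrased purely in terms of $\Spinc$-structures (not symplectic data) and hence carry over unchanged. Your sketch unpacks exactly these cited steps: the spinor bundle decomposition $\cS_M \cong G\times_K(\cS_N\,\hat\otimes\,\cS_{\kp})$, the vertical/horizontal splitting of $D_M$, and the identification of $[D_M]$ as a Kasparov product are the content of the map $\KInd_K^G$ and the verification that it sends $[D_N]$ to $[D_M]$. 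The ``principal obstacle'' you flag (bounded perturbation, grading compatibility) is precisely what Section~6 of \cite{HochsDS} handles, so you are reconstructing the reference rather than offering a different argument.
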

\begin{proof}
Let $K_*^K(N)$ and $K_*^G(M)$ be the equivariant $K$-homology groups \cite{BCH} of $N$ and $M$, respectively.
In Theorem 4.6 in \cite{HochsDS} and Theorem 4.5 in \cite{HochsPS}, a map
\[
\KInd_K^G: K_*^K(N) \to K_*^G(M)
\]
is constructed, such that the following diagram commutes:
\[
\xymatrix{
K_*^G(M) \ar[rr]^-{r_* \circ \Gind} && K_*(C^*_{\red}G) \\
K_*^K(N) \ar[u]^-{\KInd_K^G} \ar[rr]^-{\Kind} && R(K) \ar[u]_-{\DInd_K^G}.
}
\]
Here, as before, $\Gind$ is the analytic assembly map. The map $\Kind$ is the analytic assembly map for the action by $K$ on $N$, which coincides with the usual equivariant index.

In Section 6 of \cite{HochsDS}, it is shown that the map $\KInd_K^G$ maps the class
\[
[D_N] \in K_0^K(N)
\]
of to the $\Spinc$-Dirac operator $D_N$ on $N$, to the class
\[
[D_M] \in K_d^G(M)
\]
of the $\Spinc$-Dirac operator $D_M$ on $M$. Although in \cite{HochsDS} the symplectic setting is considered, the arguments in Section 6 of that paper are stated purely in terms of $\Spinc$-structures. Hence they apply in this more general setting, and we conclude that
\[
\begin{split}
\DInd_K^G\bigl( Q^{\Spinc}_{K}(N)\bigr)  &= \DInd_K^G \bigl( \Kind[D_N] \bigr) \\
	&= r_* \circ \Gind \bigl(  \KInd_K^G[D_N] \bigr) \\
	&= r_* \circ \Gind[D_M] \\
	&= Q^{\Spinc}_{G}(M)_{\red}.
\end{split}
\]
\end{proof}

Theorem \ref{thm [Q,R]=0 cocpt} follows by combining Theorem \ref{thm [Q,Ind]=0}, Proposition \ref{prop slice spinc}, Proposition \ref{prop PNxi PMxi}, and Paradan and Vergne's Theorem \ref{thm [Q,R]=0 cpt}.

\medskip
\noindent \emph{Proof of Theorem \ref{thm [Q,R]=0 cocpt}.}
By Proposition \ref{prop slice spinc}, Theorem \ref{thm [Q,Ind]=0} and
Theorem \ref{thm [Q,R]=0 cpt}, we have
\[
Q^{\Spinc}_{G}(M)_{\red} = \DInd_K^G\bigl( Q^{\Spinc}_{K}(N)\bigr)  = \sum_{\lambda \in \Lambda_+ + \rho_K} m_{\lambda}[\lambda],
\]
with $m_{\lambda}$ as in \eqref{eq [Q,R]=0 cpt}, where $Q^{\Spinc}(M_{\xi})$ is replaced by $Q^{\Spinc}(N_{\xi})$ for all $\xi$ that occur. By Definition \ref{def QR sing}, these two quantisations are equal if $\xi$ is a singular value of $\mu^{\nabla^N}$. If $\xi$ is a regular value of this map, they are equal by Proposition \ref{prop PNxi PMxi}, and the claim follows.
\hfill $\square$
\medskip



\part{Non-cocompact actions} \label{part non-cocpt}


\section{The results on non-cocompact actions} \label{sec non-cocpt case}

The main result in this paper for untwisted $\Spinc$-Dirac operators, for possibly non-cocompact actions and reduction at zero, is Theorem \ref{thm [Q,R]=0 large p}. We state it in Subsection \ref{sec non-cocpt}, and prove it in Sections \ref{sec Bochner} and \ref{sec loc est}. The generalisation of this result to $\Spinc$-Dirac operators twisted by vector bundles, Theorem \ref{thm index}, is stated in Subsection \ref{sec result twist}. It is proved in Section \ref{sec twisted}.

While the proof of Theorem \ref{thm [Q,R]=0 cocpt} in Section \ref{sec cocpt} was based on Paradan and Vergne's result in \cite{PV}, our proofs of Theorems \ref{thm [Q,R]=0 large p} and \ref{thm index} are independent of their result.

To state a $\Spinc$-quantisation commutes with reduction result without assuming that $M/G$ is compact, we recall some facts about the $G$-invariant, transversally $L^2$-index introduced in Section 4 of \cite{HM}. We now suppose that $G$ is \emph{unimodular}, and fix a left- and right-invariant Haar measure $dg$ on $G$.

\subsection{The invariant, transversally $L^2$-index}

 The definition of the invariant, transversally $L^2$-index involves \emph{cutoff functions}.
\begin{definition} \label{def cutoff fn}
Let $G$ be a unimodular locally compact group acting properly on a locally compact Hausdorff space $X$. A \emph{cutoff function} is a continuous function $f$ on $X$ such that the support of $f$ intersects every $G$-orbit in a compact set, and for all $x \in X$, one has
\[
\int_G f(gx)^2\, dg = 1,
\]
with respect to a Haar measure $dg$ on $G$.
\end{definition}
It is shown in Proposition 8 in Section 2.4 of Chapter 7 in \cite{Bourbaki} that cutoff functions exist.

Let $E \to M$ be a $G$-equivariant vector  bundle, equipped with a $G$-invariant metric. Let 
$L^2(E)$ be the $L^2$-space of sections of $E$, with respect to this metric, and 
 the density on $M$ associated to the Riemannian metric induced by the $\Spinc$-structure.
\begin{definition}
The space $L^2_T(E)$ of \emph{transversally $L^2$-sections} of $E$ is the space of measurable sections $s$ of $E$ such that $fs \in L^2(E)$ for all cutoff functions $f$ on $M$, up to equality almost everywhere.
\end{definition}
One can show that for a $G$-invariant transversally $L^2$-section $s \in L^2_T(E)^G$, the $L^2$-norm of $fs$ does not depend on the cutoff function $f$ (see Lemma 4.4 in \cite{HM}). This turns the $G$-invariant part $L^2_T(E)^G$ of $L^2_T(E)$ into a Hilbert space.

Let $D$ be a $G$-equivariant (differential) operator on $\Gamma^{\infty}(E)$. Suppose $E$ is $\Z_2$-graded, and that $D$ is odd with respect to this grading. 
\begin{definition} \label{def transv L2 index}
The \emph{transversally $L^2$-kernel} of $D$ is
\[
\ker_{L^2_T}(D) := \ker(D) \cap L^2_T(E).
\]
If the $G$-invariant part $\ker_{L^2_T}(D)^G$ of $\ker_{L^2_T}(D)$ is finite-dimensional, then the \emph{$G$-invariant, transversally $L^2$-index} of $D$ is the integer
\[
\ind^G_{L^2_T}(D) := \dim\bigl(\ker_{L^2_T}(D^+)^G \bigr) - \dim\bigl(\ker_{L^2_T}(D^-)^G \bigr),
\]
where $D^{\pm}$ is the restriction of $D$ to the even or odd part of $\Gamma^{\infty}(E)$.
\end{definition}

\begin{remark} \label{rem L2T cpt}
If $G$ is compact, then the transversally $L^2$-index of $D$ is the $G$-invariant part of its $L^2$-index. If $M/G$ is compact, then the transversally $L^2$-index of $D$ is the index of $D$ restricted to $G$-invariant smooth sections.
\end{remark}

\subsection{Invariant quantisation} \label{sec non-cocpt}

As shown in \cite{HM}, the transversally $L^2$-index of Definition \ref{def transv L2 index} allows one to make sense of quantisation and reduction without assuming $M$, $G$ or $M/G$ to be compact. There will only be a cocompactness assumption on the set of zeros of a vector field on $M$. This vector field is defined in terms of the momentum map and a \emph{family} of inner products on $\kg^*$, by which we mean a metric on the vector bundle
\[
\gstarM := M \times \kg^* \to M,
\]
 with a certain $G$-invariance property. Using such a family of inner products, rather than a single one, allows us to define a suitable $G$-invariant vector field, despite the fact that $\kg$ does not admit an $\Ad(G)$-invariant inner product in general.

Let $\{(\relbar, \relbar)_m\}_{m \in M}$ be a $G$-invariant metric on the vector bundle $\gstarM$, with respect to the $G$-action given by
\[
g\cdot(m, \xi) = (g\cdot m, \Ad^*(g)\xi),
\]
for $g \in G$, $m \in M$ and $\xi \in \kg^*$. Such a metric exists by Lemma 2.1 in \cite{HM}. Consider the map
\[
(\munab)^*: M \to \kg
\]
defined by
\begin{equation} \label{eq def mu star}
\langle \xi,  (\munab)^*(m)\rangle = \bigl( \xi,  \munab(m) \bigr)_m,
\end{equation}
for all $\xi \in \kg^*$ and $m \in M$. This induces a $G$-invariant vector field ${\vnab}$ on $M$, given by
\begin{equation} \label{eq def v}
\vnab_m := 2\bigl((\munab)^*(m) \bigr)^M_m = 2\ddt \exp\bigl(t (\munab)^*(m) \bigr)m,
\end{equation}
for $m \in M$. (The factor $2$ was included for consistency with \cite{HM, TZ98}.) A central assumption we make is that the critical set $\Crit({\vnab})$ of zeros of ${\vnab}$ is \emph{cocompact}. This implies that $M_0$ is compact.

Recall the definition of the Dirac operator $D_p$ in Subsection \ref{sec Dirac ops}, for a $p \in \N$. We will apply the invariant, transversally $L^2$-index to a Witten-type deformation of $D_p$.
\begin{definition} \label{def deformed Dirac}
For $p \in \N$ and $t \in \R$,  the \emph{deformed Dirac operator} $D_{p, t}$ is the operator
\[
D_{p, t} := D_p + \frac{it}{2}c({\vnab})
\]
on $\Gamma^{\infty}(\cS_p)$.
\end{definition}
Note that 
\[
D_{1, 1} = D + \frac{i}{2}c({\vnab}).
\]
In general, $D_{p, t}$ is $G$-equivariant, by $G$-invariance of $\vnab$. Suppose that $M$ is even-dimensional. Then $\cS_p$ is $\Z_2$-graded, and $D_{p, t}$ is odd with respect to this grading.

Suppose $M$ is complete in the Riemannian metric induced by the $\Spinc$-structure.
It turns out that in this non-cocompact setting, the invariant, transversally $L^2$-index of $D_{p, t}$ is well-defined for large enough $t$.
\begin{theorem} \label{thm quant well defd}
One can choose the metric on $\gstarM$ in such a way that 
for all $t \geq 1$, the $G$-invariant part of $\ker_{L^2_T}(D_{p, t})$ is finite-dimensional, for all $p \in \N$. 
\end{theorem}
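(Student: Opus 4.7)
The plan is to follow the strategy of Tian--Zhang \cite{TZ98} and its $G$-invariant extension in \cite{HM}: derive a Bochner--Lichnerowicz identity for $D_{p,t}^2$, then choose the family of inner products on $\gstarM$ so that $|\vnab|^2$ grows fast enough at infinity to make $D_{p,t}^2$ coercive outside a $G$-cocompact subset of $M$. Once that is done, finite-dimensionality of the $G$-invariant, transversally $L^2$-kernel will drop out of the framework of \cite{HM}.

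The first step is a direct computation yielding
\[
D_{p,t}^2 \;=\; D_p^2 \;+\; \tfrac{it}{2}\{D_p, c(\vnab)\} \;+\; \tfrac{t^2}{4}|\vnab|^2,
\]
where $\{D_p, c(\vnab)\} = c(\nabla \vnab) - 2\nabla_{\vnab}$ by the Clifford relations. Combining this with the standard Lichnerowicz formula
\[
D_p^2 \;=\; (\nabla^{\cS_p})^*\nabla^{\cS_p} \;+\; \tfrac{\kappa_M}{4} \;+\; \tfrac{2p+1}{4}\, c(R^{\nabL}),
\]
one obtains a decomposition
\[
D_{p,t}^2 \;=\; (\nabla^{\cS_p})^*\nabla^{\cS_p} \;+\; \tfrac{t^2}{4}|\vnab|^2 \;+\; tA \;+\; B_p,
\]
where $A$ is a $G$-invariant first-order operator depending only on $\vnab$ and $\nabla\vnab$, and $B_p$ is a $G$-invariant zeroth-order term whose pointwise norm is bounded linearly in $p$ by the norms of $R^{\nabL}$, $\kappa_M$, and the covariant derivatives of $\vnab$.

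The second step is to choose the metric on $\gstarM$. I would start from any $G$-invariant metric on $\gstarM$ (which exists by Lemma~2.1 of \cite{HM}) and multiply it by a $G$-invariant positive function $\varphi\in C^\infty(M)^G$ that tends to infinity outside every $G$-cocompact neighborhood of $\Crit(\vnab)$, while satisfying $|d\varphi|/\varphi\to 0$ at infinity. Such a $\varphi$ exists because $\Crit(\vnab)/G$ is compact. After this rescaling, $|\vnab|^2$ is multiplied by $\varphi^2$, while the pointwise norm of $A$ grows only with $\varphi$. Applying Cauchy--Schwarz to the cross term $tA$, and absorbing half of the resulting quantity into $(\nabla^{\cS_p})^*\nabla^{\cS_p}$ and the other half into $\tfrac{t^2}{4}|\vnab|^2$, one obtains, for each $p\in\N$ and each $t\ge 1$, a pointwise estimate
\[
D_{p,t}^2 \;\ge\; \tfrac{1}{2}(\nabla^{\cS_p})^*\nabla^{\cS_p} \;+\; c_{p,t}\,\id
\]
outside a $G$-cocompact subset $U_{p,t}\subset M$, with $c_{p,t}>0$.

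Given this coercivity, finite-dimensionality of $\ker_{L^2_T}(D_{p,t})^G$ follows as in Section~4 of \cite{HM}: the lower bound forces any $G$-invariant section in $\ker(D_{p,t})\cap L^2_T$ to have its transversal $L^2$-mass concentrated on $U_{p,t}$, and since $U_{p,t}/G$ is compact, elliptic regularity together with standard $G$-invariant Sobolev estimates embed this kernel into a finite-dimensional subspace of smooth sections. The main obstacle will be to ensure that a single metric on $\gstarM$ works uniformly in $p$: since $B_p$ grows linearly in $p$, the conformal factor $\varphi$ must be chosen to grow faster than any polynomial in the distance to $G\cdot\Crit(\vnab)$, so that the positive contribution of $\tfrac{t^2}{4}|\vnab|^2$ dominates $B_p$ for every $p$ and every $t\ge 1$ simultaneously.
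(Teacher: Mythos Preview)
Your overall strategy matches the paper's, but there is a genuine gap in the absorption step that the paper handles quite differently.

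The first-order part of your cross term $tA$ is exactly $-it\,\nabla^{\cS_p}_{\vnab}$. If you try to control it by Cauchy--Schwarz, you get at best
\[
t\,|\langle \nabla^{\cS_p}_{\vnab}s,s\rangle| \;\le\; t\,\|\vnab\|\,|\nabla s|\,|s| \;\le\; \tfrac{1}{2}|\nabla s|^2 + \tfrac{t^2}{2}\|\vnab\|^2|s|^2,
\]
and the second term is \emph{larger} than the available $\tfrac{t^2}{4}\|\vnab\|^2$. Rescaling the metric on $\gstarM$ by $\varphi$ does not help: both the good term $\tfrac{t^2}{4}\|\vnab\|^2$ and the bad term from Cauchy--Schwarz scale like $\varphi^2$, so the factor-of-two deficit persists identically. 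Thus your ``absorb half into each'' step cannot close as written, and you end up with a \emph{negative} potential outside $U_{p,t}$ rather than a positive one.

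The paper's way around this is not an estimate but an identity. Lemma~\ref{lem Lie spinors} gives, for each $X\in\kg$,
\[
\nabla^{\cS_p}_{X^M} \;=\; \cL^{\cS_p}_X + B_X + (2p+1)\pi i\,\munab_X,
\]
with $B_X$ a Clifford endomorphism. Summing against $2\munab_j$ turns $\nabla^{\cS_p}_{\vnab}$ into a Lie-derivative term (which \emph{vanishes} on $G$-invariant sections), a genuine bundle endomorphism, and the positive scalar $(2p+1)2\pi i\,\cH^{\nabla}$. So on $\Gamma^{\infty}(\cS_p)^G$ the Bochner formula becomes
\[
D_{p,t}^2 \;=\; D_p^2 + tA + (2p+1)2\pi t\,\cH^{\nabla} + \tfrac{t^2}{4}\|\vnab\|^2,
\]
with $A\in\End(\cS_p)$ zeroth order. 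Now the rescaling trick \emph{does} work: $A^\psi$ scales like $\psi$ (Lemma~\ref{lem est A}) while $\|\vnab\|^2$ scales like $\psi^2$, giving $A \ge -\|\vnab\|^2 - C$ after a suitable choice (Proposition~\ref{prop choice metric}). This, together with nonnegativity of $\widetilde{D}_p^*\widetilde{D}_p$, yields the localisation estimate of Proposition~\ref{prop loc V}, and finite-dimensionality follows from the Fredholm machinery of \cite{HM} exactly as you describe in your last paragraph.

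A second point: once you use the identity above, the only $p$-dependent term beyond $D_p^2$ is $(2p+1)2\pi t\,\cH^{\nabla}$, which is \emph{nonnegative}. So a single choice of metric works for all $p$ simultaneously, and there is no need to make $\varphi$ grow ``faster than any polynomial'' as you suggest at the end. Your worry about the linear-in-$p$ Lichnerowicz curvature term is absorbed by working with $\widetilde{D}_p^*\widetilde{D}_p$ rather than the Bochner Laplacian; that quadratic form is automatically nonnegative regardless of $p$.
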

This allows us to define the $G$-invariant part of $\Spinc$-quantisation.
\begin{definition} \label{def invar quant}
The \emph{$G$-invariant $\Spinc$-quantisation of $M$} with respect to the given $\Spinc$-structure, and the connection $\nabla$ on $L$, is
\[
Q^{\Spin^c}(M)^G := \ind^G_{L^2_T}(D_{1, 1}).
\]
\end{definition}

Suppose $0$ is a $\Spinc$-regular value of $\munab$. By Proposition \ref{prop Spinc reg} and Example \ref{ex reg zero}, this is true for example if $0$  is a regular value of $\munab$ and $G$ is semisimple or Abelian. Alternatively,  by Proposition \ref{prop Spinc fibred}, it is enough that $0$ is a regular value of a $\Spinc$-moment map $\munabN: N \to \kk^*$ on a $\Spinc$-slice $N$. 
Since $M_0$ is compact by cocompactness of $\Crit({\vnab})$, Definition \ref{def QR} applies, and one has
\[
Q^{\Spinc}(M_0) = \ind(D_{M_0}).
\]
Analogously to the symplectic case \cite{HM} and the compact case  \eqref{eq [Q,R]=0 cpt zero}, one expects $\Spinc$-quantisation to commute with reduction in this non-cocompact setting. We will prove the following version of this statement.
\begin{theorem}[$\Spinc$-quantisation commutes with reduction; non-cocompact case] \label{thm [Q,R]=0 large p}
Suppose $G$ acts freely\footnote{It will turn out that, for a natural choice of $\nabla'$ on the determinant line bundle of the $\Spinc$-structure used, the $\Spinc$-momentum maps for $\nabla$ and $\nabla'$ differ by a nonzero factor, so that the condition that $G$ acts freely on $\munabinv(0)$ is the same for the two connections.} on $\munabinv(0)$ (rather than just locally freely). Then there exists a $G$-equivariant $\Spin^c$-structure on $M$ and a connection on the corresponding determinant line bundle, such that, for these choices,
\begin{equation} \label{eq [Q,R]=0 non-cocpt}
Q^{\Spin^c}(M)^G = Q^{\Spinc}(M_0) \quad \in \Z.
\end{equation}
\end{theorem}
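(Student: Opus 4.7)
The plan is to adapt the Tian--Zhang Witten-type deformation approach \cite{TZ98}, as generalised to non-cocompact settings in \cite{HM}, from the symplectic to the $\Spinc$-case. The freedom to choose the $\Spinc$-structure in the statement will be exploited by twisting the given structure by a sufficiently high tensor power $L^p$ of the determinant line bundle, equipped with the naturally induced connection. This turns the Dirac operator $D_{1,1}$ for the new structure into (essentially) the deformed operator $D_{p,1}$ for the original one, and serves as a replacement for the prequantisation condition used in the symplectic case, supplying enough curvature to make the Bochner estimates work.

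Step one is to establish the Bochner--Weitzenb\"ock-type identity
\[
D_{p,t}^2 = D_p^2 + \frac{t^2}{4}|\vnab|^2 + \frac{it}{2}\bigl[D_p, c(\vnab)\bigr],
\]
where the commutator expands into curvature and derivative-of-$\vnab$ terms via Lemma \ref{lem dmuX}. After choosing the metric on $\gstarM$ so that $\Crit(\vnab) = \munabinv(0)$ (using freeness of the $G$-action on $\munabinv(0)$ together with the characterisation of $\Crit(\vnab)$ in terms of the critical set of $\|\munab\|^2$ afforded by Lemma \ref{lem dmuX}), I would show that outside any fixed tubular neighbourhood $U$ of $\munabinv(0)$ the term $\tfrac{t^2}{4}|\vnab|^2$ dominates for $t$ large, provided $p$ is chosen large enough to absorb the zeroth-order curvature contribution uniformly on $M$. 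This yields Theorem \ref{thm quant well defd} together with $t$-independence of $\ind^G_{L^2_T}(D_{p,t})$ beyond some threshold, so that the invariant index can be computed in the limit $t \to \infty$.

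Step two is the local model near $\munabinv(0)$. Freeness of the $G$-action on $\munabinv(0)$, together with cocompactness of $\Crit(\vnab)$, makes $\munabinv(0) \to M_0$ a compact principal $G$-bundle, and the equivariant tubular neighbourhood theorem identifies $U$ with the normal bundle $\cN \to \munabinv(0)$. Writing the model Dirac operator on $\cN$ as a product, performing a Gaussian/harmonic-oscillator reduction in the fibres, and descending to $M_0$ as in \cite{HM}, the $G$-invariant transversally $L^2$-kernel of $D_{p,t}$ on $U$ is identified with $\ker D_{M_0}$. The twist by $L^p$ is arranged precisely so that the $\Spinc$-structure induced on $M_0$ under this identification, via Lemma \ref{lem Spinc normal} applied to the $\Spinc$-regular value $0$, coincides with the one used to define $Q^{\Spinc}(M_0)$ in Definition \ref{def QR}, absorbing any shift that would otherwise appear.

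The main obstacle I expect lies in the analytic step: proving that $\ind^G_{L^2_T}(D_{p,t})$ is well-defined, finite, and $t$-independent in the non-cocompact setting, where standard elliptic theory does not apply to $G$-invariant sections. The approach of \cite{HM} proceeds via cutting off against a cutoff function as in Definition \ref{def cutoff fn} and controlling $[D_{p,t}, f]$ uniformly in $t$. The new difficulty in the $\Spinc$-setting is that the commutator $[D_p, c(\vnab)]$ carries additional Clifford terms originating in the curvature $R^{\nabL}$ and in $\nabla^{\cS}\vnab$ that are absent or simpler in the symplectic case; dominating these uniformly on $M$ by $\tfrac{t^2}{4}|\vnab|^2$ is the crux of the matter, and is precisely where the large-$p$ twist of the $\Spinc$-structure is essential.
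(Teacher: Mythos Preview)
Your outline follows the paper's strategy in broad strokes---Witten deformation, Bochner formula, localisation near $\munabinv(0)$, and the twist by $L^p$ realised as a change of $\Spinc$-structure---but Step one contains a genuine gap that breaks the argument.

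You assert that the metric on $\gstarM$ can be chosen so that $\Crit(\vnab) = \munabinv(0)$. This is false in general: $\vnab_m$ vanishes whenever $(\munab)^*(m)$ lies in the infinitesimal stabiliser $\kg_m$, so $\Crit(\vnab)$ typically has components away from $\munabinv(0)$, and no choice of metric on $\gstarM$ removes them. Freeness of the action on $\munabinv(0)$ says nothing about stabilisers elsewhere, and Lemma \ref{lem dmuX} does not give the characterisation you need. Consequently your domination claim fails: on $\Crit(\vnab)\setminus\munabinv(0)$ the term $\tfrac{t^2}{4}\|\vnab\|^2$ is identically zero, so it cannot dominate anything there for any $t$.

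The mechanism the paper actually uses to localise past these extra components is the term $(2p+1)2\pi t\,\cH^{\nabla}$ appearing in the Bochner formula (Theorem \ref{thm Bochner}), where $\cH^{\nabla} = \|\munab\|^2$. This function is strictly positive away from $\munabinv(0)$, including on the other components of $\Crit(\vnab)$, and it scales linearly in $p$. The paper therefore proves \emph{two} localisation estimates: Proposition \ref{prop loc V} (valid for all $p$, localising to a neighbourhood of $\Crit(\vnab)$, driven by $\|\vnab\|^2$) and Proposition \ref{prop loc U} (valid only for $p$ large, localising to a neighbourhood of $\munabinv(0)$, driven by $(2p+1)\cH^{\nabla}$). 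The large-$p$ hypothesis is needed precisely to make $(2p+1)\cH^{\nabla}$ beat the endomorphism $A$ on the region where $\vnab$ vanishes but $\munab$ does not---not, as you suggest, merely to absorb curvature uniformly on $M$. Your proposal conflates these two estimates and misidentifies where the $p$-dependence enters.
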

\begin{remark}
The choice of $\Spinc$-structure in Theorem \ref{thm [Q,R]=0 large p} amounts to taking large enough tensor powers of the determinant line bundle of a given $\Spinc$-structure. I.e.\ one starts with an initial $\Spinc$-structure $P\to M$ with determinant line bundle $L\to M$, and the result holds for $\Spinc$-structures with determinant line bundle $L^p\to M$, for $p$ large enough. 
So if $L$ is not a torsion class in $H^2(M; \Z)$, then the result holds for infinitely many $\Spinc$-structures. 

The connection on the determinant line bundle $L^p$ used can be any connection induced by a connection on $L$ (and the minimal value of $p$ depends on this inital connection on $L$).
\end{remark}

\begin{remark}
We could prove Theorem \ref{thm quant well defd} by referring to \cite{Braverman2} and using the elliptic regularity arguments in \cite{HM}. We will give an independent proof of finite-dimentionality of $\ker_{L^2_T}(D_{p, t})^G$, however, as a by-product of the localisation arguments needed to prove Theorem \ref{thm [Q,R]=0 large p}. 
\end{remark}

\subsection{$\rho$-shifts and asymptotic results} \label{sec rho shift}

If $M$ and $G$ are compact, one may take $t_0 = 0$ in Definition \ref{def invar quant}. Then $Q^{\Spin^c}(M)^G$  is the invariant part of  \eqref{eq def quant cpt}, which by \eqref{eq [Q,R]=0 cpt zero} equals $Q(M_{\rho_K})$. On the other hand,
Theorem \ref{thm [Q,R]=0 large p} states that, for a certain $G$-equivariant $\Spinc$-structure on $M$ and a connection on its determinant line bundle,
\[
Q^{\Spin^c}(M)^G = Q^{\Spinc}(M_0).
\]
Hence, apparently, one has
\begin{equation} \label{eq M0 Mrho}
 Q(M_0) = Q(M_{\rho_K})
 \end{equation}
 for this choice of $\Spinc$-structure and connection. 

This potential contradiction can be resolved, by noting that, for the $\Spinc$-structure and the connection $\nabla'$ used, one has
\[
\mu^{\nabla'} = p \mu^{\nabla},
\]
for a connection $\nabla$  on the determinant line bundle of a $\Spinc$-structure initially given, and a large enough integer $p$. (See \eqref{eq mu p} in the proof of Proposition \ref{prop Spinc p}.) For any $\xi \in \kg^*$, let $M_{\xi}$ and $M'_{\xi}$ be the reduced spaces at $\xi$ for the momentum maps $\mu^{\nabla}$ and $\mu^{\nabla'}$, respectively. Then
\[
M'_{\xi} = M_{\xi/p}.
\]
In particular, $M'_0 = M_0$, and $M'_{\rho_K} = M_{\rho_K/p}$.

The statement \eqref{eq M0 Mrho} is therefore that
\[
Q(M_{\rho_K/p}) = Q(M_0),
\]
for $p$ large enough. In the symplectic setting, this follows from the fact that $Q(M_{\xi})$ is independent of small variations of $\xi$ (see Theorem 2.5 in \cite{MS} if the action is free on $(\mu^{\nabla})^{-1}(\xi)$, or \cite{Zhang} for a holomorphic version). More generally, if $M$ is of the form $M = G\times_K N$ as in Subsection \ref{sec cocpt nonzero}, then by Proposition \ref{prop PNxi PMxi}, one has
\[
Q(M_{\xi}) = Q(N_{\xi}),
\]
which is independent of small variations of $\xi$ if $N$ is a compact Hamiltonian $K$-manifold (but $M$ is not necessarily symplectic). 

In the general non-cocompact setting of Subsection \ref{sec non-cocpt}, this leads one to expect that, if $\munab$ is $G$-proper (in the sense that the preimage of any cocompact set is cocompact), there is an open neighbourhood $U$ of $0$ in $\kg^*$, such that for all $\Spinc$-regular values $\xi \in U$ of $\munab$,
\[
Q(M_{\xi}) = Q(M_{0}).
\]

The above arguments show that, for `asymptotic' quantisation commutes with reduction results,  reduction at zero (or possibly a nearby regular value of the momentum map) is really the only natural case to consider.

\subsection{An index formula for twisted $\Spinc$-Dirac operators} \label{sec result twist}

The main results on $\Spinc$-Dirac operators in the non-cocompact case, Theorems \ref{thm quant well defd} and \ref{thm [Q,R]=0 large p}, generalise to $\Spinc$-Dirac operators twisted by arbitrary vector bundles. We use this to obtain an index formula for Braverman's analytic index of such operators, Theorem \ref{thm index}, expressing it in terms of characteristic classes on $M_0$. A potentially interesting feature of this formula is that it involves localisation to $(\munab)^{-1}(0)$. In the setting we consider, where the manifold $M$, the group $G$ acting on it, and the quotient $M/G$ may all be noncompact, it is unlikely that there is a topological expression for the index of (twisted) $\Spinc$-Dirac operators in terms of characteric classes on $M$. However, localisation to $\munabinv(0)$ allows us to still define a meaningful topological index, as an integral over the compact space $M_0$.

In the compact setting, the index of any elliptic operator on a $\Spinc$-manifold equals the index of a twisted $\Spinc$-Dirac operator. Hence index formulas for the latter kind of operators immediately generalise to the former. In the noncompact setting we consider here, such a principle is not (yet) available. Still, the index formula we obtain for twisted $\Spinc$-Dirac operators strongly suggests a more general underlying equality of topological and analytic indices.

Fix $p \in \N$. 
We retain all other notation used previously. In particular, we have the connection $\nabla^{\cS_p}$ on $\cS_p$, and the $\Spinc$-moment map $\munab: M \to \kg^*$ induced by a connection $\nabla$ on the determinant line bundle $L \to M$. In addition, consider a Hermitian, $G$-equivariant vector bundle $E \to M$. Let $\nabla^E$ be  a Hermitian, $G$-invariant connection on $E$. Consider the connection
\[
\nabla^{\cS_p \otimes E} := \nabla^{\cS_p} \otimes 1_E + 1_{\cS_p} \otimes \nabla^E
\]
on $\cS_p \otimes E$. 
\begin{definition} \label{def twisted Dirac}
The \emph{twisted $\Spinc$-Dirac operator} associated to $\nabla$ and $\nabla^E$ is the composition
\[
D^E_p: \Gamma^{\infty}(\cS_p \otimes E) \xrightarrow{\nabla^{\cS_p \otimes E}} \Omega^1(M; \cS_p \otimes E)
	\xrightarrow{c\otimes 1_E} \Gamma^{\infty}(\cS_p \otimes E).
\]
For $t \in \R$,  the \emph{deformed $\Spinc$-Dirac operator} twisted by $E$ via $\nabla^E$ is the operator
\[
D^E_{p, t} := D^E_{p} + \frac{it}{2}c(v^{\nabla}) \otimes 1_E,
\]
\end{definition}

Theorems \ref{thm quant well defd} and \ref{thm [Q,R]=0 large p} generalise to the operator $D^E_p$ as follows.
\begin{theorem} \label{thm index}
Suppose that $0$ is a $\Spinc$-regular value of $\munab$, and that $G$ acts freely on $(\munab)^{-1}(0)$. Then there are a $G$-invariant metric on $\gstarM$ and a $p_E \in \N$ such that if $p \geq p_E$, then
\[
\bigl(\ker_{L^2_T} D^E_{p, 1}\bigr)^G
\]
is finite-dimensional, and one has
\[
\ind^G_{L^2_T} D^E_{p, 1} = \ind D_{M_0}^{E_0} = \int_{M_0} \ch(E_0) e^{\frac{p}{2}c_1(L_0)} \hat A(M_0).
\]
Here $E_0 := (E|_{(\munab)^{-1}(0)})/G$ and $L_0 := (L|_{(\munab)^{-1}(0)})/G$.
\end{theorem}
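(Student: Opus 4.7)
The plan is to establish the two equalities separately: the analytic equality $\ind^G_{L^2_T} D^E_{p,1} = \ind D_{M_0}^{E_0}$ by extending the Witten-type localisation behind Theorem \ref{thm [Q,R]=0 large p} to the twisted operator, and the topological equality by a direct application of the Atiyah--Singer index theorem on the compact reduced space $M_0$.

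For the analytic step, I would follow the Bochner-plus-localisation strategy of Sections \ref{sec Bochner} and \ref{sec loc est}. The square expands as
\[
(D^E_{p,t})^2 = (D^E_p)^2 + \tfrac{t^2}{4}|\vnab|^2 + tA_E,
\]
where $A_E$ is a $G$-invariant first-order operator depending linearly on $\vnab$ and its covariant derivative, while the twisted Lichnerowicz formula gives
\[
(D^E_p)^2 = (\nabla^{\cS_p \otimes E})^* \nabla^{\cS_p \otimes E} + \tfrac{\kappa}{4} + \tfrac{p}{2}c(R^\nabla) + c(R^E),
\]
with $\kappa$ the scalar curvature and $R^\nabla$, $R^E$ the curvatures of $\nabla$ on $L$ and of $\nabla^E$ on $E$. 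By Lemma \ref{lem dmuX}, after choosing the metric on $\gstarM$ as in Theorem \ref{thm quant well defd}, the zero set of $\vnab$ coincides with $(\munab)^{-1}(0)$; hence $|\vnab|^2$ is strictly positive off this set and, for $t$ large, squeezes $G$-invariant transversally $L^2$ kernel elements into an arbitrarily small tubular neighbourhood of $(\munab)^{-1}(0)$. Near that set one rescales in the normal direction: the term $\tfrac{p}{2}c(R^\nabla)$ scales with $p$ while $c(R^E)$ does not, so for $p \geq p_E$ the twisting curvature is absorbed and positivity is restored outside any prescribed tube. The estimates are uniform along $G$-orbits by cocompactness of $\Crit(\vnab)$, and finite-dimensionality of $\bigl(\ker_{L^2_T} D^E_{p,1}\bigr)^G$ drops out of the same estimate as a by-product.

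With the invariant kernel localised near $(\munab)^{-1}(0)$, $\Spinc$-regularity of $0$ and freeness of the $G$-action provide a $G$-equivariant tubular neighbourhood on which averaging by $G$ reduces the analysis to the compact $\Spinc$-manifold $M_0$. The $p$-th twisted $\Spinc$-structure on $M$ descends, via Lemma \ref{lem Spinc normal}, to a $\Spinc$-structure on $M_0$ with determinant line $L_0^p$, and $E$ descends to $E_0$; a model computation on the tube identifies $\bigl(\ker_{L^2_T} D^E_{p,1}\bigr)^G$ with $\ker D_{M_0}^{E_0}$ as $\Z_2$-graded spaces, yielding the first equality. Atiyah--Singer applied to $D_{M_0}^{E_0}$ with determinant line $L_0^p$ and twisting bundle $E_0$ then gives
\[
\ind D_{M_0}^{E_0} = \int_{M_0} \ch(E_0)\, e^{c_1(L_0^p)/2}\, \hat A(M_0) = \int_{M_0} \ch(E_0)\, e^{p c_1(L_0)/2}\, \hat A(M_0).
\]
The main obstacle will be the Bochner estimate in the crossover regime, where the $p$-dependent term $\tfrac{p}{2}c(R^\nabla)$ must be leveraged to dominate the fixed curvature term $c(R^E)$ uniformly on a cocompact neighbourhood of the complement of $(\munab)^{-1}(0)$, while keeping all estimates $G$-invariant and compatible with the chosen family of inner products on $\gstarM$; this interplay is exactly what forces the threshold $p_E$ and pins down how the metric on $\gstarM$ must be chosen.
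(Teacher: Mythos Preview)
Your overall architecture (Bochner formula, localisation to a tube around $\munabinv(0)$, then Atiyah--Singer on $M_0$) matches the paper's, but the mechanism you propose for the localisation has two genuine gaps. First, $\Crit(\vnab)$ does \emph{not} coincide with $\munabinv(0)$: $\vnab_m = 2\bigl((\munab)^*(m)\bigr)^M_m$ also vanishes whenever $(\munab)^*(m)$ lies in the infinitesimal stabiliser $\kg_m$, so $\munabinv(0)\subset\Crit(\vnab)$ is typically strict, and rescaling the metric on $\gstarM$ by a positive function leaves $\Crit(\vnab)$ unchanged (Lemma~\ref{lem dmuX} says nothing about this). Second, the Lichnerowicz term $\tfrac{p}{2}c(R^{\nabla})$ has no definite sign, so it cannot dominate $c(R^E)$ or drive the kernel toward $\munabinv(0)$. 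The actual source of $p$-dependent positivity is different: on $G$-invariant sections the first-order piece $-it\nabla^{\cS_p\otimes E}_{\vnab}$ of your $A_E$ is rewritten via Lemma~\ref{lem Lie spinors} as $\nabla^{\cS_p}_{X^M}=\cL^{\cS_p}_X + B_X + (2p+1)\pi i\,\munab_X$, which after pairing with $\munab$ produces the zeroth-order term $(2p+1)2\pi t\,\cH^{\nabla}$ with $\cH^{\nabla}=\|\munab\|^2$. This term is nonnegative, vanishes exactly on $\munabinv(0)$, and for large $p$ dominates the bounded endomorphism error outside any neighbourhood $U$ of $\munabinv(0)$; this is what gives the finer localisation past $\Crit(\vnab)$ and produces the threshold $p_E$.

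There is also a twisted-specific term you have not identified. Applying the same rewriting to the $E$-factor yields an additional contribution $4\pi t\,1_{\cS_p}\otimes(\munab,\mu^E)$, where $\mu^E_X:=\tfrac{1}{2\pi i}\bigl(\cL^E_X-\nabla^E_{X^M}\bigr)\in\End(E)$ is the generalised moment map of $E$ (Proposition~\ref{prop Bochner twist}). This endomorphism has no sign and does not scale with $p$, so it must be bounded separately: the paper does so by a further rescaling of the metric on $\gstarM$ (depending on $E$) so that $\|(\munab,\mu^E)\|\leq\|\vnab\|^2+C'$ pointwise (Lemma~\ref{lem bound muE}), after which $(2p+1)\cH^{\nabla}$ absorbs it for $p\geq p_E$. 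Without isolating and controlling this term the estimate outside $U$ does not close. Finally, the passage from the localisation estimate to $\ind D_{M_0}^{E_0}$ is not an identification of kernels as $\Z_2$-graded spaces but an index comparison carried out with the Bismut--Lebeau machinery of \cite{BL91}, as in \cite{TZ98, MZ, HM}; claiming the kernels themselves coincide is too strong.
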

In the compact case, results analogous to Theorem \ref{thm index} were obtained in \cite{PV3, TZ98b}.
Theorem \ref{thm index} will be proved in Section \ref{sec twisted}. Some applications are given in Subsection \ref{sec appl ex twist}.


\section{The square of the deformed Dirac operator} \label{sec Bochner}

We now turn to proving Theorems \ref{thm quant well defd} and \ref{thm [Q,R]=0 large p}. As in \cite{HM, TZ98}, the starting point is an explicit formula, given in Theorem \ref{thm Bochner}, for the square of the deformed Dirac operator $D_{p, t}$ of Definition \ref{def deformed Dirac}. This is the basis of the localisation estimates, Propositions \ref{prop loc V} and \ref{prop loc U}, that will be used to prove Theorems \ref{thm quant well defd} and \ref{thm [Q,R]=0 large p}.

We continue using the notation of Section \ref{sec Dirac red} and Subsection \ref{sec non-cocpt}. We will also write $d_M$ and $d_G$ for the dimensions of $M$ and $G$, respectively. We denote  the Riemannian metric on $M$ induced by the given $\Spinc$-structure by $(\relbar, \relbar)$. The associated Levi--Civita connection on $TM$ will be denoted by $\nabla^{TM}$.

\subsection{A Bochner formula}

Let us fix some notation that will be used in the expression for $D_{p, t}^2$.
Let $\{h_1, \ldots, h_{d_G}\}$ be an orthonormal frame for $\gstarM$ with respect to a given $G$-invariant metric. (Such a frame can be obtained for example by applying the Gram-Schmidt procedure to a constant frame.) Let $\{h_1^*, \ldots, h_{d_G}^*\}$ be the dual frame of $M \times \kg \to M$. Let $\munab_1, \ldots, \munab_{d_G} \in C^{\infty}(M)$ be the functions such that
\begin{equation} \label{eq def muj}
\munab = \sum_{j=1}^{d_G} \mu_j^{\nabL} h_j,
\end{equation}
so that
\[
(\munab)^* = \sum_{j=1}^{d_G} \mu_j^{\nabL} h_j^*,
\]
and
\begin{equation} \label{eq decomp v}
{\vnab} = 2\sum_{j=1}^{d_G} \mu_j^{\nabL} V_j,
\end{equation}
where $V_j$ is the vector field given by
\begin{equation} \label{eq def Vj}
V_j(m) = \bigl(h_j^*(m) \bigr)^M_m,
\end{equation}
at a point $m \in M$. Consider the norm-squared function $\cH^{\nabla}$ of $\mu^{\nabla}$, given by
\begin{equation} \label{eq def H}
\cH^{\nabla}(m) = \| \mu^{\nabla}(m) \|_m^2 = \sum_{j=1}^{d_G} \mu_j^{\nabla}(m)^2.
\end{equation}
Here $\|\cdot\|_m$ is the norm on $\kg^*$ induced by $(\relbar, \relbar)_m$. 

We will use the operators $\cL^{\cS_p}_{h_j^*}$ on $\Gamma^{\infty}(\cS_p)$ given by
\[
\bigl(\cL^{\cS_p}_{h_j^*} s\bigr)(m) = \bigl(\cL^{\cS_p}_{h_j^*(m)}s\bigr)(m).
\]
Finally,  for any vector field $u$ on $M$, consider the commutator vector field $[u, (h^*_j)^M ]$, given by
\[
[u, (h^*_j)^M](m) = \bigl[u, h^*_j(m)^M \bigr](m).
\]
Here $h^*_j(m)^M$ is the vector field induced by $h^*_j(m) \in \kg$, and $[\relbar, \relbar]$ is the Lie bracket of vector fields. Importantly, for fixed $m$, the vector fields $V_j$ and $h^*_j(m)^M$ are equal at the point $m$, but not necessarily at other points.

The square of $D_{p, t}$ has the following form.
\begin{theorem} \label{thm Bochner}
One has
\[
D_{p, t}^2 = D_p^2 + tA + (2p + 1)2\pi t \cH^{\nabla} + \frac{t^2}{4}\|{\vnab}\|^2 -2 i t \sum_{j=1}^{d_G} \munab_j \cL^{\cS_p}_{h_j^*},
\]
where $A$ is a vector bundle endomorphism of $\cS_{p}$, given in terms of a local orthonormal frame $\{e_1, \ldots, e_{d_M}\}$ of $TM$ by
\begin{multline} \label{eq def A}
A := \frac{i}{4}\sum_{k = 1}^{d_M} c(e_k)c\bigl(\nabla^{TM}_{e_k} {\vnab} \bigr) 
+ \frac{i}{2}\sum_{j=1}^{d_G} c(\grad \mu^{\nabla}_j) c(V_j) \\
	-\frac{i}{2} \sum_{j = 1}^{d_G} \sum_{k=1}^{d_M} \munab_j c(e_k)c\bigl( [e_k, (h_j^*)^M - V_j] \bigr).
\end{multline}
\end{theorem}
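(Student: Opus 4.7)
The plan is to compute $D_{p,t}^2$ by expanding the defining sum and carefully converting covariant derivatives along the vector field $v^\nabla$ into Lie derivatives of sections of $\cS_p$.

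First I would write
\[
D_{p,t}^2 = D_p^2 + \frac{it}{2}\bigl\{D_p, c(v^\nabla)\bigr\} - \frac{t^2}{4}c(v^\nabla)^2.
\]
The last term is immediate: the Clifford relation gives $c(v^\nabla)^2 = -\|v^\nabla\|^2$, which accounts for the $\tfrac{t^2}{4}\|v^\nabla\|^2$ contribution. For the anticommutator, I would use the local expression $D_p = \sum_k c(e_k)\nabla^{\cS_p}_{e_k}$ and the identity $\{c(e_k), c(w)\} = -2(e_k, w)$, together with compatibility of $\nabla^{\cS_p}$ with $c$, to obtain the standard Clifford-Leibniz formula
\[
\bigl\{D_p, c(v^\nabla)\bigr\} \;=\; \sum_{k=1}^{d_M} c(e_k)\,c\bigl(\nabla^{TM}_{e_k} v^\nabla\bigr) \;-\; 2\nabla^{\cS_p}_{v^\nabla}.
\]

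Next I would substitute $v^\nabla = 2\sum_j \mu_j^\nabla V_j$ and replace $\nabla^{\cS_p}_{V_j}$, at each point $m$, by the Lie derivative of the element $h_j^*(m)\in\kg$. The key tool here is the Kostant-type identity for the Spin$^c$ connection on $\cS_p$: for any fixed $X\in\kg$,
\[
\nabla^{\cS_p}_{X^M} \;=\; \cL^{\cS_p}_X \;+\; \tfrac{1}{4}\sum_{k} c(e_k)c\bigl(\nabla^{TM}_{e_k} X^M\bigr) \;+\; 2\pi i\bigl(p+\tfrac{1}{2}\bigr)\mu^\nabla_X,
\]
which follows by combining the standard Kostant formula for the local Spin factor with the Spin$^c$-momentum map relation \eqref{eq def Spinc mom map} applied to the $L^{p+1/2}$-factor in the decomposition \eqref{eq decomp Sp}. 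Applying this at each $m$ with $X=h_j^*(m)$, using $\mu^\nabla_{h_j^*(m)}(m)=\mu_j^\nabla(m)$ and summing, the moment map term produces $-2it\cdot 2\pi i(p+\tfrac12)\sum_j(\mu_j^\nabla)^2 = (2p+1)2\pi t\,\cH^\nabla$, while the Lie derivatives yield the $-2it\sum_j \mu_j^\nabla \cL^{\cS_p}_{h_j^*}$ term.

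Finally I would collect the remaining purely endomorphism-valued pieces and match them with the definition \eqref{eq def A} of $A$. Expanding $\nabla^{TM}_{e_k}v^\nabla$ by the Leibniz rule splits the Clifford-Leibniz term into a piece $it\sum_j c(\grad \mu_j^\nabla)c(V_j)$ plus $it\sum_{j,k}\mu_j^\nabla c(e_k)c(\nabla^{TM}_{e_k} V_j)$. The remaining discrepancy with $tA$ reduces to showing
\[
\nabla^{TM}_{e_k}\bigl(h_j^*(m)^M - V_j\bigr)(m) \;=\; \bigl[e_k,\,(h_j^*)^M - V_j\bigr](m).
\]
Since $h_j^*(m)^M - V_j$ is a vector field vanishing at $m$ (as $V_j(m)=h_j^*(m)^M_m$), this identity is a direct consequence of the torsion-freeness of $\nabL^{TM}$: for any $W$ with $W(m)=0$, one has $\nabla^{TM}_{e_k}W(m) - [e_k,W](m) = \nabla^{TM}_{W(m)}e_k = 0$. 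The main subtlety — and the step I expect to be the most delicate — is this last bookkeeping step, because $(h_j^*)^M$ is not a genuine vector field (it is defined only pointwise by freezing $h_j^*$ at each $m$) and one must be careful that the Kostant formula is applied to the frozen element while all derivatives taken later respect the varying field $V_j$.
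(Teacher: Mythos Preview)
Your proposal is correct and follows essentially the same route as the paper. The paper also expands $D_{p,t}^2$ via the Clifford--Leibniz identity (citing (1.26) in \cite{TZ98}), isolates $\nabla^{\cS_p}_{\vnab}$, and then converts $\nabla^{\cS_p}_{V_j}$ at a point $m$ into $\cL^{\cS_p}_{h_j^*(m)}$ using exactly your Kostant-type identity, which the paper states and proves separately as Lemma~\ref{lem Lie spinors}. The final bookkeeping --- matching the remaining endomorphism terms with $A$ --- is done in the paper by invoking Lemma~B.2 of \cite{HM}, whose content is precisely your torsion-freeness argument for the vector field $h_j^*(m)^M - V_j$ vanishing at $m$; you have simply supplied that step explicitly rather than citing it.
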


\subsection{Lie derivatives of spinors}

An important ingredient of the proof of Theorem \ref{thm Bochner} is an expression for the Lie derivative of sections of $\cS_p$.
\begin{lemma} \label{lem Lie spinors}
Let  $X \in \kg$. Then, as operators on $\Gamma^{\infty}(\cS_p)$, one has
\[
\cL^{\cS_p}_X = \nabla^{\cS_p}_{X^M} - B_X - (2p+1)\pi i \munab_X,
\]
where, in terms of a local orthonormal frame $\{e_1, \ldots, e_{d_M}\}$ of $TM$,
\[
B_X := \frac{1}{4}\sum_{k, l = 1}^{d_M} \bigl(\nabla_{e_k}X^M, e_l \bigr)c(e_k)c(e_l).
\]
\end{lemma}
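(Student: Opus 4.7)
The plan is to verify the identity locally using the decomposition $\cS_p|_U \cong \cS^{\Spin}_U \otimes L|_U^{p+1/2}$ recalled in Subsection~\ref{sec Dirac ops}. On this local tensor product both $\cL^{\cS_p}_X$ and $\nabla^{\cS_p}_{X^M}$ satisfy the Leibniz rule with respect to pure tensors $\sigma\otimes\ell$, so the problem splits into separate computations on the two factors and then recombines at the end.

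For the $\Spin$-factor, I would invoke Kostant's classical formula for the Lie derivative of spinors along a Killing vector field. Because the Riemannian metric on $M$ is induced by the $G$-equivariant $\Spinc$-structure, the $G$-action is isometric, so each $X^M$ is Killing; in particular $(\nabla^{TM}_{e_k}X^M, e_l)$ is antisymmetric in $k,l$, which ensures that $B_X$ is a well-defined degree-two section of the Clifford bundle. Kostant's formula then reads
\[
\cL^{\cS^{\Spin}}_X\sigma = \nabla^{\cS^{\Spin}}_{X^M}\sigma - B_X\sigma,
\]
and can either be derived by differentiating the $\Spin(d_M)$-equivariance of $\nabla^{\cS^{\Spin}}$ at the identity (so that the infinitesimal $\Spin$-lift of the Killing field $X^M$ is precisely $-\tfrac{1}{4}\sum_{k,l}(\nabla^{TM}_{e_k}X^M, e_l)\,c(e_k)c(e_l)$), or quoted as a standard fact.

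For the line-bundle factor, the defining equation~\eqref{eq def Spinc mom map} of the $\Spinc$-momentum map gives $\cL^L_X = \nabL_{X^M} - 2\pi i\,\munab_X$ as operators on $\Gamma^{\infty}(L)$. Passing to the $(p+\tfrac12)$-th tensor power via the Leibniz rule for both $\cL$ and the induced connection multiplies the scalar factor by $(p+\tfrac12)$, so that
\[
\cL^{L^{p+1/2}}_X - \nabla^{L^{p+1/2}}_{X^M} = -(2p+1)\pi i\,\munab_X
\]
on $L|_U^{p+1/2}$. Combining this with the identity on the $\Spin$-factor via $\nabla^{\cS_p}_{X^M}(\sigma\otimes\ell) = \nabla^{\cS^{\Spin}}_{X^M}\sigma\otimes\ell + \sigma\otimes\nabla^{L^{p+1/2}}_{X^M}\ell$ then yields the claimed identity on pure tensors, and hence everywhere on $\cS_p|_U$.

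The main obstacle is justifying the argument in a way that is manifestly independent of the choice of local $\Spin$-structure and of the corresponding auxiliary square root $L|_U^{1/2}$, neither of which need exist globally. This is handled by observing that the $\mathbb{Z}_2$-ambiguity in any two such local choices cancels between the $\Spin$-factor and the line-bundle factor, so that the difference $\cL^{\cS_p}_X - \nabla^{\cS_p}_{X^M}$ and the global expression $-B_X - (2p+1)\pi i\,\munab_X$ agree as operators on $\Gamma^{\infty}(\cS_p)$ regardless of the local trivialisations used to derive them.
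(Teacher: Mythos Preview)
Your proposal is correct and follows essentially the same route as the paper: localise via $\cS_p|_U \cong \cS^{\Spin}_U \otimes L|_U^{p+1/2}$, use Kostant's formula on the $\Spin$-factor, use the defining relation \eqref{eq def Spinc mom map} together with the Leibniz rule on the line-bundle factor, and recombine. The only cosmetic difference is that the paper writes $L|_U^{p+1/2}$ as $(L|_U^{1/2})^{\otimes(2p+1)}$ and applies the Leibniz rule to $2p+1$ separate half-line-bundle factors, whereas you pass directly to the fractional power; the content is the same.
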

\begin{proof}
Let $X \in \kg$ be given.
We give a local argument on an open subset $U \subset M$, using the decomposition \eqref{eq decomp Sp} of $\cS_P|_U$. 
Let $ \nabla^{L|_U^{1/2}}$ be the connection on $L|_U^{1/2} \to U$ induced by $\nabla$. We first note that
\begin{equation} \label{eq Lie nabla Lhalf}
\cL^{L|_U^{1/2}}_X = \nabla^{L|_U^{1/2}}_{X^M} - i\pi\munab_X|_U.
\end{equation}
Indeed, if $t_1, t_2 \in \Gamma^{\infty}\bigl( L|_U^{1/2} \bigr)$, then by definition of $\munab$,
\begin{multline*}
\bigl(\cL^{L|_U^{1/2}} t_1 \bigr) \otimes t_2 + t_1 \otimes \bigl(\cL^{L|_U^{1/2}} t_2 \bigr) 
= \cL^{L|_U}_X(t_1 \otimes t_2) \\
	= \bigl( \nabL_{X^M} - 2\pi i\munab_X \bigr)(t_1 \otimes t_2) \\
	=  \left(\bigl( \nabla^{L|_U^{1/2}}_{X^M} - i\pi\munab_X \bigr)t_1 \right)\otimes t_2 + t_1 \otimes \left(\bigl( \nabla^{L|_U^{1/2}}_{X^M} - i\pi\munab_X \bigr)t_2 \right).
\end{multline*}

Let $s \in \Gamma^{\infty}(\cS^{\Spin}_U)$. Then
\begin{equation} \label{eq Lie nabla spin}
\cL^{\cS^{\Spin}_U}_X s = \nabla^{\cS^{\Spin}_U}_{X^M}s - B_X s.
\end{equation}
%
Let $t_1, \ldots, t_{2p+1} \in \Gamma^{\infty}\bigl(L|_U^{1/2} \bigr)$. Then
\[
s \otimes t_1 \otimes \cdots \otimes t_{2p+1} \in \Gamma^{\infty}\bigl(\cS^{\Spin}_U \otimes L|_U^{p+1/2}\bigr) = \Gamma^{\infty}(\cS_p|_U). 
\]
Because of \eqref{eq Lie nabla Lhalf} and \eqref{eq Lie nabla spin}, one has
\begin{multline*}
\cL^{\cS_p}_X (s \otimes t_1 \otimes \cdots \otimes t_{2p+1}) = \\ 
	\bigl(\cL^{\cS^{\Spin}_U}_Xs\bigr)  \otimes t_1 \otimes \cdots \otimes t_{2p+1} +
		 s\otimes \left(\sum_{j=1}^{2p+1} t_1 \otimes \cdots \otimes \bigl(\cL^{L|_U^{1/2}}_X t_j \bigr) \otimes \cdots \otimes t_{2p+1} \right) =  \\
	\bigl(\nabla^{\cS^{\Spin}_U}_{X^M}s\bigr)  \otimes t_1 \otimes \cdots \otimes t_{2p+1} +
		 s\otimes \left(\sum_{j=1}^{2p+1} t_1 \otimes \cdots \otimes \bigl(\nabla^{L|_U^{1/2}}_{X^M} t_j  \bigr)\otimes \cdots \otimes t_{2p			+1}  \right) \\
		 -\bigl(B_X + (2p+1)\pi i \mu_X \bigr) s \otimes t_1 \otimes \cdots \otimes t_{2p+1}  = \\
 \left(\nabla^{\cS_p}_{X^M} - B_X - (2p+1) \pi i  \munab_X) \right) s \otimes t_1 \otimes \cdots \otimes t_{2p+1}.
\end{multline*}
\end{proof}

\subsection{Proof of the Bochner formula}

Using Lemma \ref{lem Lie spinors}, we can prove Theorem \ref{thm Bochner}.

As in the equality (1.26) in \cite{TZ98}, the fact that $\nabla^{\cS_p}$ satisfies a Leibniz rule with respect to the Clifford action (see e.g.\ Proposition 4.11 in \cite{LM}) implies that
\begin{equation} \label{eq Bochner 0}
 D_{p, t}^2 = D_p^2 + \frac{it}{2} \Sk c(e_k) c(\nabla^{TM}_{e_k}{\vnab})
	-it \nabla^{\cS_p}_{\vnab} + \frac{t^2}{4} \|{\vnab}\|^2.
\end{equation}
The main part of the proof of Theorem \ref{thm Bochner} is a computation of an expression for the first-order term $\nabla^{\cS_p}_{\vnab}$. 

By \eqref{eq decomp v}, we have
\[
\nabla^{\cS_p}_{{\vnab}} = 2 \Sj \mu^{\nabla}_j \nabla^{\cS_p}_{V_j}.
\]
By Lemma \ref{lem Lie spinors}, one has for all $s \in \Gamma^{\infty}(\cS_p)$, all $m \in M$ and all $j$,
\begin{multline*}
\bigl( \nabla^{\cS_p}_{V_j} s \bigr)(m) = \bigl( \nabla^{\cS_p}_{h_j^*(m)^M} s \bigr)(m) \\
=\left(\left( \cL^{\cS_p}_{h_j^*(m)} + B_{h_j^*(m)} + (2p+1)\pi i \munab_j \right)s\right)(m).
\end{multline*}
Multiplying this identity by $2\mu^{\nabla}_j(m)$ and summing over $j$, we obtain
\begin{multline} \label{eq Bochner 1}
\bigl( \nabla^{\cS_p}_{\vnab} s \bigr)(m) = \left(\biggl(2\sum_{j=1}^{d_G} \munab_j \cL^{\cS_p}_{h_j^*}\biggr)s\right)(m) \\
	+\left(\biggl(2\Sj \munab_j  B_{h_j^*(m)}\biggr)s\right)(m) + \bigl((2p+1)2\pi i \cH^{\nabL} s\bigr)(m).
\end{multline}

Lemma B.2 in \cite{HM} allows us to compute
\begin{multline*}
\left(\biggl(2\Sj \munab_j  B_{h_j^*(m)}\biggr)s\right)(m) = 
\frac{1}{2}\Sj \mu^{\nabla}_j \sum_{k, l = 1}^{d_M} \bigl(\nabla_{e_k}h_j^*(m)^M, e_l \bigr)c(e_k)c(e_l) \\
= \left(\biggl( 
\frac{1}{4}\Sk c(e_k)c\bigl( \nabla^{TM}_{e_k}{\vnab} \bigr) - \frac{1}{2}\Sj c(\grad \mu^{\nabla}_j)c(V_j) \biggr. \right. \\
\hspace{3.5cm} \left. \biggl.
+\frac{1}{2}\Sj \Sk \mu_j^{\nabla} c(e_k) c\bigl( [e_k, (h_j^*)^M - V_j] \bigr)
\biggr)s\right)(m) \\
= i\left( \biggl( A - \frac{it}{2} \Sk c(e_k) c\bigl( \nabla^{TM}_{e_k} {\vnab}\bigr)  \biggr)s\right)(m).
\end{multline*}
Theorem \ref{thm Bochner} follows from this equality and \eqref{eq Bochner 0} and \eqref{eq Bochner 1}.

\begin{remark}
Lemma B.3 in \cite{HM} does not apply in the general $\Spinc$-case, so that $\grad \mu^{\nabla}_j$, which appears in the expression for the operator $A$, cannot be worked out further in the present setting.
\end{remark}

\subsection{An estimate for the operator $A$}

To prepare for the localisation estimates in Section \ref{sec loc est}, we show that the operator $A$ in Theorem \ref{thm Bochner} satisfies a certain estimate with respect to a rescaling of the metric on $\gstarM$ by a function.

For any positive, $G$-invariant smooth function $\psi \in C^{\infty}(M)^G$, consider the metric 
\begin{equation} \label{eq metric psi}
\{\psi(m) (\relbar, \relbar)_m\}_{m \in M} 
\end{equation}
on $\gstarM$. Let $A^{\psi}$ be the operator in Theorem \ref{thm Bochner}, defined with respect to this metric. 
In the choice of the metric on $\gstarM$ in Proposition \ref{prop choice metric}, we will use the following property of the dependence of the operator $A^{\psi}$ on $\psi$.
\begin{lemma} \label{lem est A}
There are $G$-invariant, positive, continuous functions $F_1, F_2 \in C(M)^G$ such that for all $G$-invariant, positive smooth functions $\psi \in C^{\infty}(M)$, one has the pointwise estimate
\begin{equation} \label{eq est A}
\|A^{\psi}\| \leq F_1 \psi + F_2 \|d\psi\|.
\end{equation}
\end{lemma}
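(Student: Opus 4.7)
The plan is to compute $A^{\psi}$ explicitly in terms of $A$ by tracking how each ingredient in formula \eqref{eq def A} transforms under the rescaling of the family of inner products on $\gstarM$ from $(\relbar,\relbar)_m$ to $\psi(m)(\relbar,\relbar)_m$. If $\{h_1,\ldots,h_{d_G}\}$ is orthonormal for the original metric, then $h_j^{\psi} := h_j/\sqrt{\psi}$ is orthonormal for the rescaled metric. Since the dual frame in $\kg$ is defined via the pairing between $\kg$ and $\kg^*$ (not via the metric), one has $(h_j^{\psi})^* = \sqrt{\psi}\, h_j^*$, whence the coefficients and vector fields defined by \eqref{eq def muj} and \eqref{eq def Vj} become $\mu_j^{\psi} = \sqrt{\psi}\,\munab_j$ and $V_j^{\psi} = \sqrt{\psi}\,V_j$. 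Finally, the defining relation \eqref{eq def mu star} of $(\munab)^*$ gives $(\munab)^{*,\psi} = \psi\,(\munab)^*$, and hence by \eqref{eq def v},
\[
v^{\nabla,\psi} = \psi\,\vnab.
\]

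Substituting into the three terms of \eqref{eq def A}, I expect each term to split into a ``$\psi$ times the original term'' plus a Leibniz correction proportional to $c(\grad\psi)c(\vnab)$. For the first term, $\nabla^{TM}_{e_k}(\psi\vnab) = (e_k\psi)\vnab + \psi\nabla^{TM}_{e_k}\vnab$ and the identity $\sum_k (e_k\psi)c(e_k) = c(\grad\psi)$ yield a correction of $\tfrac{i}{4}c(\grad\psi)c(\vnab)$. For the second term, $\grad\mu_j^{\psi} = \tfrac{\munab_j}{2\sqrt{\psi}}\grad\psi + \sqrt{\psi}\,\grad\munab_j$, and combining with $c(V_j^{\psi}) = \sqrt{\psi}\,c(V_j)$ and $\vnab = 2\sum_j \munab_j V_j$ produces a correction $\tfrac{i}{8}c(\grad\psi)c(\vnab)$. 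For the third term, the key computation is
\[
[e_k,(h_j^{\psi,*})^M - V_j^{\psi}](m) = \sqrt{\psi(m)}\,[e_k,(h_j^*)^M - V_j](m) - e_k(\sqrt{\psi})(m)\,V_j(m),
\]
obtained by noting that $h_j^{\psi,*}(m)^M = \sqrt{\psi(m)}\,h_j^*(m)^M$ as vector fields on $M$ (rescaling a fixed $\kg$-valued element by the scalar $\sqrt{\psi(m)}$), while $V_j^{\psi}=\sqrt{\psi}V_j$ is a function-times-vector-field and contributes a Leibniz term under $[e_k,\cdot]$. Using $\sqrt{\psi}\,e_k(\sqrt{\psi}) = \tfrac{1}{2}e_k(\psi)$, multiplication by $\mu_j^{\psi} = \sqrt{\psi}\,\munab_j$ and summation yield a correction $\tfrac{i}{8}c(\grad\psi)c(\vnab)$. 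Summing all three contributions,
\[
A^{\psi} = \psi\,A + \frac{i}{2}\,c(\grad\psi)\,c(\vnab).
\]

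Taking pointwise operator norms and applying the standard Clifford estimate $\|c(X)c(Y)\|\leq |X|\,|Y|$, together with $|\grad\psi|=\|d\psi\|$ under the musical isomorphism, gives the pointwise bound $\|A^{\psi}(m)\|\leq \|A(m)\|\,\psi(m) + \tfrac{1}{2}\|\vnab(m)\|\cdot\|d\psi(m)\|$. Setting $F_1 := \|A\|+1$ and $F_2 := \tfrac{1}{2}\|\vnab\|+1$ proves the lemma; both are strictly positive and continuous, and are $G$-invariant because $A$ and $\vnab$ are (the former by $G$-equivariance of all its ingredients, the latter by $G$-invariance of the metric on $\gstarM$ and of $\munab$).

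The main technical obstacle is the bookkeeping for the third term: one must correctly distinguish the point-frozen rescaling of the dual frame element in $\kg$, which contributes only a constant prefactor $\sqrt{\psi(m)}$ to the auxiliary vector field $h_j^*(m)^M$, from the genuinely function-valued rescaling $V_j^{\psi}=\sqrt{\psi}V_j$, whose commutator with $e_k$ produces an additional Leibniz term. The three Leibniz corrections from the three terms must then be combined (with coefficients $\tfrac14,\tfrac18,\tfrac18$) into the single clean expression $\tfrac{i}{2}c(\grad\psi)c(\vnab)$; it is only after this cancellation that the bound takes the simple form demanded by \eqref{eq est A}, rather than involving $\grad\munab_j$ or terms with the Levi--Civita connection that would otherwise be hard to estimate uniformly in $\psi$.
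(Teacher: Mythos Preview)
Your proof is correct, and in fact sharper than the paper's. You obtain the exact identity
\[
A^{\psi} = \psi\,A + \tfrac{i}{2}\,c(\grad\psi)\,c(\vnab),
\]
from which the estimate follows immediately with the explicit, manifestly $G$-invariant choices $F_1 = \|A\|+1$ and $F_2 = \tfrac{1}{2}\|\vnab\|+1$. The paper, by contrast, does not combine the three Leibniz corrections; it estimates each of the three summands of \eqref{eq def A} separately, producing local bounds $a_j, b_j, c_j$ that depend on the chosen orthonormal frame $\{e_k\}$ and are therefore neither globally defined nor $G$-invariant. It then appeals to auxiliary lemmas (C.1, C.2 of \cite{HM}) to produce global $G$-invariant majorants $F_1, F_2$.

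What your approach buys is a closed formula and a proof that avoids the globalisation step entirely; the only nontrivial point is the $G$-equivariance of $A$, which follows from Theorem~\ref{thm Bochner} since all other terms in the Bochner formula are $G$-equivariant. What the paper's approach buys is that one never needs to notice the clean cancellation $\tfrac{1}{4}+\tfrac{1}{8}+\tfrac{1}{8}=\tfrac{1}{2}$: each term is bounded independently and the frame-dependence is handled externally. Your route is more elegant and yields more information; the paper's is more mechanical but perhaps more robust if one were less confident in the exact combinatorics.
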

\begin{proof}
Let $\psi \in C^{\infty}(M)^G$ be a $G$-invariant, positive smooth function. With respect to the metric \eqref{eq metric psi} rescaled by $\psi$, we use the orthonormal frame of $\gstarM$ made up of the functions
\[
h_j^{\psi} := \frac{1}{\psi^{1/2}}h_j.
\]
The dual frame of $M \times \kg \to M$ consists of the functions
\[
(h_j^{\psi})^* =  \psi^{1/2} h_j^*.
\]
Let $(\mu_j^{\nabla})^{\psi}$ be defined like the functions $\mu_j^{\nabla}$ in \eqref{eq def muj}, with $h_j$ replaced by $h_j^{\psi}$. Analogously, let $V_j^{\psi}$ be the vector field defined like $V_j$ in \eqref{eq def Vj}, with the same replacement. Then
\begin{equation} \label{eq mu V psi}
\begin{split}
(\mu_j^{\nabla})^{\psi} &= {\psi^{1/2}} \mu_j^{\nabla}; \\
V_j^{\psi} &= \psi^{1/2}V_j.
\end{split}
\end{equation}
It follows for example from the latter two equalities and \eqref{eq decomp v} that the vector field $({\vnab})^{\psi}$, defined like ${\vnab}$ with the metric on $\gstarM$ rescaled by $\psi$, equals
\begin{equation} \label{eq v psi}
({\vnab})^{\psi} = \psi \vnab.
\end{equation}

We start with some local computations for each term in the definition \eqref{eq def A} of the operator $A^{\psi}$. Let $\{e_1, \ldots, e_{d_M}\}$ be a local orthonormal frame for $TM$. By \eqref{eq v psi}, we have for all $k$,
\[
\nabla^{TM}_{e_k}({\vnab})^{\psi} = \psi \nabla^{TM}_{e_k}{\vnab} + e_k(\psi) \vnab. 
\]
Hence
\[
\begin{split}
\left\|  \frac{i}{4}\sum_{k = 1}^{d_M} c(e_k)c\bigl(\nabla^{TM}_{e_k} ({\vnab})^{\psi} \bigr)  \right\| 
	&\leq \frac{1}{4} \Sk \bigl(\psi \|\nabla^{TM}_{e_k} {\vnab}\|  +  \|e_k(\psi)\| \|{\vnab}\|  \bigr) \\
	& \leq a_1 \psi + a_2 \|d\psi\|, 
\end{split}
\]
with 
\[
\begin{split}
a_1 &:= \frac{1}{4}\Sk \|\nabla^{TM}_{e_k} {\vnab}\|;\\
a_2 &:= \frac{1}{4}d_M \|{\vnab}\|.
\end{split}
\]
Note that the function $a_1$ is not defined globally, and is not $G$-invariant on its domain in general. We will come back to this later.

Secondly, because of \eqref{eq mu V psi}, we have
\begin{multline} \label{eq est A 1}
\left\| \frac{i}{2}\sum_{j=1}^{d_G} c\bigl(\grad (\mu^{\nabla}_j)^{\psi} \bigr) c(V_j^{\psi}) \right\| \\
	\leq \frac{1}{2} \Sj \left( \psi \|\grad \mu_j^{\nabla}\|\,  \|V_j\| + |\mu_j^{\nabla}| \, \psi^{1/2} \|\grad \psi^{1/2}\| \, \|V_j\| \right).
\end{multline}
Since $\psi^{1/2} \|\grad \psi^{1/2}\| = \frac{1}{2}\|d\psi\|$, \eqref{eq est A 1} is at most equal to
\[
b_1 \psi + b_2 \|d\psi\|,
\]
with
\[
\begin{split}
b_1 &:=  \frac{1}{2} \Sj \|\grad \mu_j^{\nabla}\| \,  \|V_j\|;\\
b_2 &:= \frac{1}{4} \Sj |\mu_j^{\nabla}| \, \|V_j\|.
\end{split}
\]

Finally, Lemma C.8 in \cite{HM} implies that
\[
\bigl[e_k, \bigl( (h_j^*)^{\psi} \bigr)^M - V_j^{\psi} \bigr] = \psi^{1/2} [e_k, (h_j^*)^M - V_j] -e_k(\psi^{1/2})V_j.
\]
Therefore,
\begin{multline} \label{eq est A 2}
\left\| -\frac{i}{2} \sum_{j = 1}^{d_G} \sum_{k=1}^{d_M} (\munab_j)^{\psi} c(e_k)c\bigl( \bigl[e_k, \bigl( (h_j^*)^{\psi} \bigr)^M - V_j^{\psi}\bigr] \bigr) \right\| \\
	\leq \frac{1}{2} \Sj \Sk \left( 
	\psi |\munab_j| \, \bigl\|  [e_k, (h_j^*)^M - V_j]  \bigr\| + \psi^{1/2} \|e_k(\psi^{1/2})\| \, |\mu^{\nabla}_j| \, \|V_j\|.
	\right)
\end{multline}
Since
\[
 \psi^{1/2} \|e_k(\psi^{1/2})\| = \frac{1}{2}\|e_k(\psi)\| \leq \frac{1}{2} \|d\psi\|,
\]
we find that \eqref{eq est A 2} is at most equal to 
\[
c_1 \psi + c_2 \|d\psi\|,
\]
with 
\[
\begin{split}
c_1 &:=  \frac{1}{2} \Sj \Sk  |\munab_j| \, \bigl\|  [e_k, (h_j^*)^M - V_j]  \bigr\|  ;\\
c_2 &:=   \frac{d_M}{2} \Sj |\munab_j| \, \|V_j\|.
\end{split}
\]

The functions $a_j$, $b_j$ and $c_j$ are not all defined globally and/or $G$-invariant. To get a global estimate for $A$, let $W\subset M$ be an open subset that intersects all $G$-orbits in nonempty, relatively compact sets. By Lemmas C.1 and C.2 in \cite{HM}, there are $G$-invariant, positive, continuous functions $F_1$ and $F_2$ on $M$, and local orthonormal frames of $TM$ around each point in $W$, such that on $W$, with respect to these frames, one has
\[
\begin{split}
a_1 + b_1 + c_1 &\leq F_1; \\
a_2 + b_2 + c_2 & \leq F_2.
\end{split}
\]
Then the estimate \eqref{eq est A} holds on $W$. Since both sides of  \eqref{eq est A} are $G$-invariant, and the definition of $A$ is independent of the local orthonormal frame chosen, we get the desired estimate on all of  $M$.
\end{proof}


\section{Localisation estimates} \label{sec loc est}

Two localisation estimates are at the cores of the proofs of Theorems \ref{thm quant well defd} and \ref{thm [Q,R]=0 large p}. These are Propositions \ref{prop loc V} and \ref{prop loc U} below. In the proofs of these estimates, we will not use the assumption that $0$ is a $\Spinc$-regular value of $\munab$. They therefore also hold in the singular case. The regularity assumption is only needed to apply the arguments near $\munabinv(0)$ to obtain Theorem \ref{thm [Q,R]=0 large p}. 

The localisation estimates are stated in terms of certain Sobolev norms.

\subsection{Sobolev norms and estimates for $D_{p, t}$}

Theorem \ref{thm quant well defd} follows from the fact that for large $t$, the operator $D_{p, t}$ induces a Fredholm operator between certain Sobolev spaces. By an elliptic regularity argument, the index of this operator is precisely the $G$-invariant transversally $L^2$-index $\ind^G_{L^2_T}$ of $D_{p, t}$. These Sobolev spaces and the index theory on them that we will use, were introduced in Section 4 of \cite{HM}. We will not need to go into the details of these spaces, but will refer to the relevant results in \cite{HM}. We do need certain ingredients of the definition of these spaces. 

One of these is a smooth cutoff function $f$ on $M$ (see Definition \ref{def cutoff fn}). We will also consider \emph{transversally compactly supported} sections of vector bundles, by which we mean sections whose support is mapped to a compact set by the quotient map $M \to M/G$. Let $\Gamma^{\infty}_{tc}(\cS_p)^G$ be the space of $G$-invariant, smooth, transversally compactly supported sections of $\cS_p$. For $k \in \N$, and $s, s' \in \Gamma^{\infty}_{tc}(\cS_p)^G$, we set
\begin{equation} \label{eq def Sob norm}
(fs, fs')_k := \sum_{j=0}^k (fD_{p}^j s, fD_p^j s')_{L^2(\cS_p)}.
\end{equation}
(Note that $fD_{p}^j s$ and $fD_{p}^j s'$ are compactly supported for all $j$.) By Lemma 4.4 in \cite{HM}, this inner product is independent of $f$, since $s$ and $s'$ are $G$-invariant. We will write $\|\cdot\|_k$ for the induced norm on $f\Gamma^{\infty}_{tc}(\cS_p)^G$. 

These Sobolev norms allow us to state the localisation estimates we will use. Fix a  $G$-invariant  open neighbourhood $V$ of the set $\Crit({\vnab})$ of zeros of ${\vnab}$. We assumed that $\Crit({\vnab})$ is cocompact, so we may assume that $V$ is relatively cocompact, in the sense that $V/G$ is a relatively compact subset of $M/G$.
\begin{proposition} \label{prop loc V}
There is a $G$-invariant metric on $\gstarM$, and there are $t_0, C, b > 0$, such that for all $t \geq t_0$, all $p \in \N$, and all $G$-invariant $s \in \Gamma^{\infty}_{tc}(\cS_p)^G$ with support disjoint from $V$, one has
\begin{equation} \label{eq est loc}
\|f D_{p, t}s\|_0^2 \geq C \bigl( \|fs\|_1^2 + (t-b) \|fs\|_0^2\bigr).
\end{equation}
\end{proposition}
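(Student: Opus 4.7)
The plan is to deduce the estimate from the pointwise Bochner formula of Theorem \ref{thm Bochner}, together with the norm bound on $A$ from Lemma \ref{lem est A}, after a judicious choice of the metric on $\gstarM$.

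First I would observe that for a $G$-invariant section $s$ one has $\cL^{\cS_p}_{h_j^*(m)} s = 0$ at every point $m \in M$, since $s$ is fixed by every one-parameter subgroup of $G$. Hence the last term of the Bochner formula annihilates $s$, and pairing with $s$ in the $G$-invariant transversally $L^2$-inner product $(\,\cdot\,,\,\cdot\,)_G = \int_M f^2\langle\,\cdot\,,\,\cdot\,\rangle$ (which is $f$-independent by Lemma 4.4 of \cite{HM}), and using the formal self-adjointness of $D_p$ and $D_{p,t}$ on $L^2_T(\cS_p)^G$, one obtains
\[
\|fD_{p,t}s\|_0^2 = \|fD_p s\|_0^2 + t(s,As)_G + (2p+1)\,2\pi t\!\int_M\! f^2\cH^{\nabla}|s|^2 + \tfrac{t^2}{4}\!\int_M\! f^2\|v^{\nabla}\|^2|s|^2,
\]
where $(s,As)_G$ is necessarily real since all other terms and the left-hand side are. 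Bounding $|(s,As)_G|$ pointwise by $\int_M f^2\|A\||s|^2$ and discarding the non-negative $\cH^{\nabla}$-term reduces the desired estimate to a pointwise lower bound on the coefficient of $f^2|s|^2$ on $M\setminus V$.

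Next I would rescale the metric on $\gstarM$ by a positive $G$-invariant smooth function $\psi$. The relations $\|v^{\nabla,\psi}\|^2 = \psi^2\|v^{\nabla}\|^2$ and $\|A^\psi\| \leq F_1\psi + F_2\|d\psi\|$ from Lemma \ref{lem est A} reduce the problem to producing $\psi$ such that
\[
\tfrac{t^2}{4}\psi^2\|v^{\nabla}\|^2 - t\bigl(F_1\psi + F_2\|d\psi\|\bigr) \;\geq\; C(t-b+1)
\]
holds pointwise on $M\setminus V$ for some constants $C,b>0$ and all $t\geq t_0$, with a small share of $\|fD_p s\|_0^2$ retained on the left to give the $\|fs\|_1^2$ contribution on the right. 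I would construct $\psi$ by taking, outside a cocompact neighbourhood of $V$, a smoothed $G$-invariant function comparable to $(1+F_1+F_2)/\|v^{\nabla}\|^2$, so that $\psi\|v^{\nabla}\|^2 \geq 1+F_1+F_2$, while $\|d\psi\|\leq \psi$ is arranged by smoothing along a $G$-invariant partition of unity; on the cocompact remainder, any smooth positive extension works. The quadratic-in-$t$ term then dominates the linear correction for $t$ sufficiently large, producing $p$-independent constants $C$, $b$, and $t_0$.

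The main obstacle is precisely this construction of $\psi$: on $M\setminus V$ the norm $\|v^{\nabla}\|$ can decay to zero at infinity, forcing $\psi$ to blow up there, while $F_1, F_2$ and $\|d\psi\|$ must be prevented from outrunning $\psi^2\|v^{\nabla}\|^2$. Balancing these competing requirements via a carefully built $G$-invariant partition of unity is the delicate part; everything else is a mechanical consequence of Theorem \ref{thm Bochner} and Lemma \ref{lem est A}.
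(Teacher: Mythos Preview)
Your overall plan---apply the Bochner formula of Theorem~\ref{thm Bochner} to $G$-invariant sections, rescale the metric on $\gstarM$ by a positive $G$-invariant $\psi$ so that Lemma~\ref{lem est A} tames $A$, and let the quadratic term win---is precisely the paper's strategy. Two steps in your execution do not go through as written, however.

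First, the passage from $\|fD_{p,t}s\|_0^2$ to your displayed identity is not a matter of ``formal self-adjointness on $L^2_T(\cS_p)^G$'': the operator $\tilDpt\colon fs\mapsto fD_{p,t}s$ satisfies $\tilDpt^*(fs)=\tilDpt(fs)+2c(df)s$ (Lemma~6.8 of \cite{HM}), so $\tilDpt^*\tilDpt$ picks up an extra term $it\,c(df)c(\vnab)$ that your formula omits. The paper absorbs this into the endomorphism $B$ of Corollary~\ref{cor est adjoint} and controls it via the function $\eta$ of \eqref{eq eta}, by demanding $\|\vnab\|\geq 1+\eta$ outside $V$ in Proposition~\ref{prop choice metric}. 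Your rescaling makes no provision for this. Second, and more seriously, your choice $\psi\sim(1+F_1+F_2)/\|\vnab\|^2$ does yield $\|A^\psi\|\lesssim\|(\vnab)^\psi\|^2$, but it gives no uniform positive lower bound on $\|(\vnab)^\psi\|=\psi\|\vnab\|$: where $\|\vnab\|\to\infty$ one has $\psi\|\vnab\|\sim(1+F_1+F_2)/\|\vnab\|\to 0$, so $\tfrac{t^2}{4}\|(\vnab)^\psi\|^2$ cannot dominate any fixed positive multiple of $t$, and the bound $\geq C(t-b)\|fs\|_0^2$ fails. The paper's Proposition~\ref{prop choice metric} instead takes $\psi^{-1}\leq\min\bigl(\cH^\nabla,\ \|\vnab\|/(1+\eta),\ \|\vnab\|^2/(2F_1)\bigr)$ together with $\|d(\psi^{-1})\|\leq\|\vnab\|^2/(2F_2)$; the middle constraint forces $\|(\vnab)^\psi\|\geq 1+\eta\geq 1$ on $M\setminus V$, which is exactly the missing ingredient. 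In short, $\psi$ must decay no faster than $\|\vnab\|^{-1}$, not $\|\vnab\|^{-2}$.
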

\begin{proposition} \label{prop loc U}
The metric on $\gstarM$ used in Proposition \ref{prop loc V} can be chosen such that, in addition to the conclusions of that proposition, for every $G$-invariant open neighbourhood $U$ of $\munabinv(0)$, there are $p_0 \in \N$ and $t_0, C, b > 0$, such that for all $t \geq t_0$ and $p \geq p_0$, and all $G$-invariant $s \in \Gamma^{\infty}_{tc}(\cS_p)^G$ with support disjoint from $U$, the estimate \eqref{eq est loc} holds.
\end{proposition}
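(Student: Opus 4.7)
The plan is to prove Proposition \ref{prop loc U} by combining Proposition \ref{prop loc V} with a Bochner-based estimate on the set $V\setminus U$, where $V$ is the $G$-invariant, relatively cocompact neighbourhood of $\Crit(\vnab)$ from Proposition \ref{prop loc V}. The key observation is that $\cH^{\nabL} = \|\munab\|^2$ vanishes precisely on $\munabinv(0)$, so on the complement of $U$ it has a strictly positive lower bound on the cocompact region $V\setminus U$; the term $(2p+1)2\pi t \cH^{\nabL}$ in Theorem \ref{thm Bochner} then provides the extra positivity needed when $p$ is large.

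A preliminary remark is that the first-order term $-2it\Sj \munab_j \cL^{\cS_p}_{h_j^*}$ in Theorem \ref{thm Bochner} annihilates $G$-invariant sections: if $s\in\Gamma^\infty_{tc}(\cS_p)^G$ and $X\in\kg$ then $\cL^{\cS_p}_X s=0$ as a section, so $(\cL^{\cS_p}_{h_j^*}s)(m)=(\cL^{\cS_p}_{h_j^*(m)}s)(m)=0$. Consequently, for such $s$, the Bochner formula specialises to
\[
D_{p,t}^2 s = D_p^2 s + tAs + (2p+1)2\pi t\, \cH^{\nabL} s + \tfrac{t^2}{4}\|\vnab\|^2 s.
\]
Pairing with $f^2 s$ and integrating by parts (permissible since $fs$ is compactly supported) yields
\[
\|fD_{p,t}s\|_0^2 \geq \|fD_p s\|_0^2 + t\langle fAs,fs\rangle_0 + (2p+1)2\pi t\langle f\cH^{\nabL} s,fs\rangle_0 + \tfrac{t^2}{4}\|f\vnab s\|_0^2
\]
up to commutator terms with $f$ which, by Lemma 4.4 in \cite{HM}, are independent of the choice of cutoff and hence controllable.

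For $s$ supported in $V\setminus U$, the idea is then elementary: since $V\setminus U$ is relatively cocompact and $\cH^{\nabL}$ is $G$-invariant, continuous and positive there, one has $\cH^{\nabL}\geq\delta>0$ on $V\setminus U$; likewise, by Lemma \ref{lem est A}, the pointwise operator norm of $A$ is bounded by some constant $M_A$ on $V$ (possibly after an initial rescaling of the metric on $\gstarM$ by a $G$-invariant positive function $\psi$ so that $F_1\psi+F_2\|d\psi\|$ is bounded on $V$, which keeps the hypotheses of Proposition \ref{prop loc V} intact). Thus, for $t\geq 1$ and $p \ge p_0$ with $(2p_0+1)2\pi\delta\geq M_A+2$, one gets
\[
\|fD_{p,t}s\|_0^2 \geq \|fD_p s\|_0^2 + t\|fs\|_0^2 \geq C\bigl(\|fs\|_1^2 + (t-b)\|fs\|_0^2\bigr),
\]
using $\|fs\|_1^2=\|fs\|_0^2+\|fD_p s\|_0^2$.

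To pass from locally supported sections to arbitrary $s$ with support disjoint from $U$, one chooses $G$-invariant smooth functions $\chi_1,\chi_2$ on $M$ with $\chi_1^2+\chi_2^2=1$, $\chi_1$ supported in $M\setminus V$, and $\chi_2$ supported in a slightly enlarged neighbourhood $V'$ of $\Crit(\vnab)$ with $\overline{V'}\cap\munabinv(0)=\emptyset$ (so that $\cH^{\nabL}$ is still bounded below on $V'\setminus U$). Applying Proposition \ref{prop loc V} to $\chi_1 s$ and the estimate above to $\chi_2 s$, and writing $D_{p,t}(\chi_i s)=\chi_i D_{p,t}s + c(d\chi_i)s$, one obtains the desired lower bound after absorbing the commutator terms $\|c(d\chi_i)s\|_0^2$ (which are bounded uniformly in $p$ and $t$, since the $\chi_i$ are fixed) into the coefficient $(2p+1)2\pi t\delta$ by further enlarging $p_0$. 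The main obstacle is precisely this last step: one must arrange the partition-of-unity geometry so that the commutator errors from cutting against $\chi_i$ are dominated by the $p$-linear gain coming from $\cH^{\nabL}\geq\delta$ on $V\setminus U$, uniformly in $t\geq t_0$, which is why the result requires $p$ large rather than holding for all $p$ as in Proposition \ref{prop loc V}.
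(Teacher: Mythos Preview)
Your overall strategy is the same as the paper's: use the Bochner formula of Theorem \ref{thm Bochner}, exploit the positive lower bound on $\cH^{\nabL}$ on the relatively cocompact set $V\setminus U$ to dominate the zeroth-order term $A$ once $p$ is large, and patch with Proposition \ref{prop loc V} via a $G$-invariant partition of unity. The paper does not spell this out either; it simply invokes Corollary \ref{cor est adjoint} and then points to the proof of Proposition 6.3 in \cite{HM}, which carries out exactly the argument you sketch.

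The one place where your write-up is loose is the passage from $D_{p,t}^2$ to $\|fD_{p,t}s\|_0^2$. Your appeal to Lemma 4.4 in \cite{HM} is misplaced: that lemma says the norm $\|fs\|_0$ is independent of the cutoff $f$, not that the commutators $[D_{p,t},f]=c(df)$ are controllable. The commutator with the Clifford term produces $it\,c(df)c(\vnab)$, which grows linearly in $t$ and involves $\|df\|$, a quantity that need not be bounded on $M$. The paper absorbs this systematically by working with the operator $\tilDpt(fs):=fD_{p,t}s$ and computing $\tilDpt^*\tilDpt$ in Corollary \ref{cor est adjoint}: the cross term is shown to satisfy a pointwise bound $\geq -C(\|\vnab\|^2+1)$, which uses the condition $\|\vnab\|\geq 1+\eta$ from Proposition \ref{prop choice metric} (with $\eta$ built from $f\|df\|$). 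On the cocompact region $V\setminus U$ your hand-waving is in fact salvageable, since $\eta$ and $\|\vnab\|$ are $G$-invariant continuous functions and hence bounded there; but you should say this explicitly rather than invoking Lemma 4.4. If you replace that sentence with a reference to Corollary \ref{cor est adjoint} (or reproduce its content on $V\setminus U$), your argument is complete and matches the paper's.
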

So the estimate holds for all $s$ supported outside $V$ for all $p$, and for all $s$ supported outside the smaller set $U$ for large $p$.

It is important that the metric on $\gstarM$ used in Propositions \ref{prop loc V} and \ref{prop loc U} is the same. They therefore actually form one result, with two conclusions.

In addition, note that the condition that $t \geq t_0$ can be absorbed into the choice of the metric on $\gstarM$, since multiplying this metric by a constant results on multiplying the vector field $v_{\nabla}$ by the same constant. The parameter $t$ was just introduced to make the arguments that follow clearer.

\subsection{Choosing the metric on $M \times \kg^*$}

One advantage of using a family of inner products on $\kg^*$, i.e.\ a metric on $\gstarM$, is that this allows us to define the $G$-invariant vector field ${\vnab}$ and the $G$-invariant function $\cH^{\nabla}$. Another advantage that is very important for our arguments is that choosing this metric in a suitable way allows us to control the terms that appear in the Bochner formula in Theorem \ref{thm Bochner}. 

To make this precise, consider the $G$-invariant, positive, continuous function $\eta$ on $M$ defined by\footnote{What follows holds for any $G$-invariant, positive, continuous function $\eta$.}
\begin{equation} \label{eq eta}
\eta(m) =\int_G f(gm)\|df\|(gm) \, dg,
\end{equation}
for $m \in M$. 
\begin{proposition} \label{prop choice metric}
The $G$-invariant metric on the bundle $\gstarM$ can be chosen in such a way that 
 for all $m \in M \setminus V$,
\begin{align}
\cH^{\nabla}(m) &\geq 1; \label{eq est H} \\
\|\vnab_m\| &\geq 1 + \eta(m), \label{eq est X1H}
\end{align}
and 
there is a positive constant $C$, such that
for all $m \in M$, the operator $A_m$ on $(\cS_p)_m$ is bounded below by
\begin{equation} \label{eq A bdd below}
A_m \geq -\|\vnab_m\|^2 -C.
\end{equation}
\end{proposition}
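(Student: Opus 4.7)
The approach is to construct the desired metric on $\gstarM$ by rescaling an arbitrary initial $G$-invariant metric $\{(\cdot,\cdot)_m\}$ by a positive, $G$-invariant smooth function $\psi \in C^\infty(M)^G$. Under this rescaling, \eqref{eq mu V psi} and \eqref{eq v psi} give $\cH^\psi = \psi \cH$ and $\vnab^\psi = \psi \vnab$, while Lemma \ref{lem est A} yields
\[
\|A^\psi\| \leq F_1 \psi + F_2 \|d\psi\|
\]
for fixed positive continuous $G$-invariant functions $F_1, F_2$ depending only on the initial metric. The task then reduces to producing a single $\psi$ making the three pointwise estimates \eqref{eq est H}, \eqref{eq est X1H}, \eqref{eq A bdd below} hold simultaneously.

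Two observations organise the construction. First, since $\munab(m) = 0$ forces $\vnab_m = 0$, we have
\[
\cH^{-1}(0) = \munabinv(0) \subset \Crit(\vnab) \subset V,
\]
so both $\cH$ and $\|\vnab\|$ are strictly positive continuous functions on $M \setminus V$. Second, cocompactness of $\Crit(\vnab)$ makes $V$ relatively cocompact, so $F_1$, $F_2$, $\eta$ are bounded there, and any $\psi$ whose own values and differential are bounded on $V$ automatically gives a bounded $\|A^\psi\|$ on $V$. Using a $G$-invariant partition of unity on $M$ together with a proper $G$-invariant smooth function $\rho \colon M \to [0,\infty)$ of bounded differential, build $\psi$ equal to a positive constant on a neighbourhood of $\Crit(\vnab)$ in $V$, satisfying $\|d\psi\| \leq \psi$ globally, and on $M \setminus V$ dominating
\[
\max\!\left(\frac{1}{\cH},\ \frac{1+\eta}{\|\vnab\|},\ \frac{F_1 + F_2 + 1}{\|\vnab\|^2}\right);
\]
concretely, take $\psi$ of the form $e^{C\rho}$ with large $C$ outside a cocompact enlargement of $V$, interpolated smoothly to the chosen constant via the partition of unity.

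With this $\psi$, the first two estimates hold on $M \setminus V$ by direct substitution. For \eqref{eq A bdd below} on $M \setminus V$, the inequality $\psi \|\vnab\|^2 \geq F_1 + F_2 \geq F_1 + F_2 \|d\psi\|/\psi$ combined with $\|d\psi\| \leq \psi$ yields $\psi^2 \|\vnab\|^2 \geq F_1 \psi + F_2 \|d\psi\| \geq \|A^\psi\|$, hence $A^\psi \geq -\|\vnab^\psi\|^2$. On the relatively cocompact set $V$, boundedness of $\psi$, $\|d\psi\|$, $F_1$, $F_2$ gives a uniform bound $\|A^\psi\| \leq C$, which suffices since the constant $C$ in \eqref{eq A bdd below} absorbs this. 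The main obstacle is the simultaneous satisfaction of the growth requirement on $\psi$ — forced by the possible blow-up of $1/\|\vnab\|^2$ and $1/\cH$ near $\partial V$ and at infinity — and the Lipschitz-type bound $\|d\psi\| \leq \psi$; this compatibility is precisely what the exponential-of-exhaustion construction guarantees, as its logarithmic derivative is controlled by $\|d\rho\|$.
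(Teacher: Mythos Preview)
Your overall strategy---rescale an initial metric by a positive $G$-invariant $\psi$ and use Lemma~\ref{lem est A}---is exactly the paper's. The gap is in your construction of $\psi$. You impose the global constraint $\|d\psi\|\le\psi$, i.e.\ $\log\psi$ is $1$-Lipschitz, and then propose $\psi=e^{C\rho}$ with $\rho$ a proper $G$-invariant function of bounded differential. But $\|d(e^{C\rho})\|=C\,e^{C\rho}\|d\rho\|$, so $\|d\psi\|\le\psi$ forces $C\|d\rho\|\le1$, hence $C$ must be \emph{small}. At the same time you need $\psi\ge(F_1+F_2+1)/\|\vnab\|^2$ on $M\setminus V$; since $\|\vnab\|$ may decay and $F_1,F_2$ may grow without any a~priori rate at infinity in $M/G$, this lower bound can grow faster than any fixed exponential $e^{C\rho}$. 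So the two requirements on $C$ are in general incompatible, and more fundamentally, no $\psi$ with $\|d\log\psi\|\le1$ can dominate a function of super-linear growth in the distance.

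The paper avoids this by \emph{not} coupling the derivative bound to $\psi$ itself. It sets
\[
\varphi_1:=\min\Bigl(\cH^{\nabla},\ \frac{\|\vnab\|}{1+\eta},\ \frac{\|\vnab\|^2}{2F_1}\Bigr),\qquad
\varphi_2:=\frac{\|\vnab\|^2}{2F_2},
\]
extends these to positive $G$-invariant continuous functions $\tilde\varphi_1,\tilde\varphi_2$ on all of $M$ (using relative cocompactness of $V$), and then invokes Lemma~C.3 of \cite{HM} to produce a smooth $G$-invariant $\psi>0$ with $\psi^{-1}\le\tilde\varphi_1$ and $\|d(\psi^{-1})\|\le\tilde\varphi_2$. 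The point is that the bound on $\|d(\psi^{-1})\|$ is the \emph{independent} function $\tilde\varphi_2$, not $\psi^{-1}$; this decoupling is what makes the construction possible for arbitrary growth. Plugging in gives, on $M\setminus V$,
\[
\frac{\|A^\psi\|}{\|(\vnab)^\psi\|^2}\le\frac{F_1}{\|\vnab\|^2}\psi^{-1}+\frac{F_2}{\|\vnab\|^2}\|d(\psi^{-1})\|\le\tfrac12+\tfrac12=1,
\]
and the rest goes through as you outlined. Replace your Lipschitz ansatz by this two-function bound (or cite the lemma from \cite{HM}) and your argument is complete.
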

\begin{proof}
Fix any $G$-invariant metric $\{(\relbar, \relbar)_m\}_{m \in M}$ on $\gstarM$. Let the $G$-invariant, positive, continuous functions $F_1$ and $F_2$ be as in Lemma \ref{lem est A}. Set
\[
\begin{split}
\varphi_1&:= \min \left(\cH^{\nabla},  \frac{\|{\vnab}\|}{1+\eta}, \frac{\|{\vnab}\|^2}{2F_1} \right) \\
\varphi_2 &:= \frac{\|{\vnab}\|^2}{2F_2}.
\end{split}
\]
This defines  $G$-invariant, continuous functions $\varphi_1$ and $\varphi_2$ on $M$, which are positive outside $\Crit({\vnab})$. Since $\Crit({\vnab})/G$ is compact, the functions $\varphi_j$ have uniform lower bounds outside the neighbourhood $V$ of $\Crit({\vnab})$. Hence there are positive, $G$-invariant, continuous functions $\tilde \varphi_j$ on $M$, such that
\[
\tilde \varphi_j|_{M \setminus V} = \varphi_j|_{M \setminus V},
\] 
for $j = 1, 2$. By Lemma C.3 in \cite{HM}, there is a $G$-invariant, positive, smooth function $\psi$ on $M$, such that 
\[
\begin{split}
\psi^{-1} & \leq \tilde \varphi_1; \\
\|d(\psi^{-1})\|& \leq \tilde \varphi_2.
\end{split}
\]
Consider the metric $\{\psi(m)(\relbar, \relbar)_m\}_{m \in M}$ on $\gstarM$, obtained by rescaling the given metric by $\psi$. We claim that this metric has the desired properties.

First of all, the function $\cH^{\nabla}_{\psi}$ and the vector field $({\vnab})^{\psi}$ associated to this metric satisfy, outside $V$,
\[
\begin{split}
\cH^{\nabla}_{\psi} &= \psi \cH^{\nabla} \geq \varphi_1^{-1} \cH^{\nabla} \geq 1; \\
\|({\vnab})^{\psi}\| &= \psi \|{\vnab}\| \geq \varphi_1^{-1} \|{\vnab}\| \geq 1+\eta.
\end{split}
\]

Furthermore, by Lemma \ref{lem est A}, the operator $A^{\psi}$ in Theorem \ref{thm Bochner}, associated to the metric on $\gstarM$ rescaled by $\psi$, satisfies, outside $V$,
\[
\begin{split}
\frac{\|A^{\psi}\|}{\|({\vnab})^{\psi}\|^2} &\leq \frac{F_1 \psi + F_2 \|d\psi\|}{\psi^2 \|{\vnab}\|^2} \\
&= \frac{F_1}{\|{\vnab}\|^2} \psi^{-1} + \frac{F_2}{\|{\vnab}\|^2} \|d(\psi^{-1})\| \\
&\leq 1.
\end{split}
\]
Hence $\|A^{\psi}\| \leq \|({\vnab})^{\psi}\|^2$, on $M \setminus V$. Since $V$ is relatively cocompact and $A^{\psi}$ is $G$-equivarant, it is bounded on $V$. So
\[
A^{\psi} \geq -C
\]
on $V$, for a certain $C>0$. We conclude that
\[
A^{\psi} \geq -\|({\vnab})^{\psi}\|^2 - C
\]
on all of $M$. 
\end{proof}

\begin{remark}
A priori, the choice of metric on $\gstarM$ could influence $\ind^G_{L^2_T}(D_{p, t})$, if $\Crit(\vnab)$ changes (while staying cocompact). Multiplying a metric by a function $\psi$ as in Proposition \ref{prop choice metric} does not change $\Crit(\vnab)$, however, and the second point in Theorem 2.15 in \cite{Braverman2} implies that $\ind^G_{L^2_T}(D_{p, t})$ is independent of $\psi$. It follows from Theorem \ref{thm [Q,R]=0 large p} that this index is independent of the metric in general, as long as $\Crit(\vnab)$ is cocompact, for large enough $p$.

Also note that one may take $t_0 = 1$ in Theorem \ref{thm quant well defd}, since, in the notation of the proof of Proposition \ref{prop choice metric},
\[
\frac{it}{2}c\bigl(  (\vnab)^{\psi} \bigr) = \frac{i}{2}c\bigl(  (\vnab)^{t\psi} \bigr).
\]

\end{remark}

\subsection{Proofs of the localisation estimates}

Proposition \ref{prop choice metric} allows us to prove Propositions \ref{prop loc V} and \ref{prop loc U}. Fix a $G$-invariant metric on $\gstarM$ as in Proposition \ref{prop choice metric}, and a smooth cutoff function $f$. It will be useful to consider the operator
\[
\tilDpt: f\Gamma^{\infty}_{tc}(\cS_p)^G \to f\Gamma^{\infty}_{tc}(\cS_p)^G,
\]
defined by
\begin{equation} \label{eq def D tilde}
\tilDpt fs = f D_{p, t}s,
\end{equation}
for $s \in \Gamma^{\infty}_{tc}(\cS_p)^G$. We will write $\widetilde{D}_p := \widetilde{D}_{p, 0}$.

We need some arguments to account for the fact that, unlike $D_{p, t}$, the operator $\tilDpt$ is not symmetric with respect to the $L^2$-inner product. Let $\tilDpt^*$ be its formal adjoint. 
Combining Theorem \ref{thm Bochner} and Proposition \ref{prop choice metric}, one obtains the following key estimate for the operator $\tilDpt^* \tilDpt$.
\begin{corollary} \label{cor est adjoint}
One has
\[
\tilDpt^* \tilDpt = \widetilde{D_p}^* \widetilde{D_p} + tB + (2p+1)2\pi t \cH^{\nabL} + \frac{t^2}{4} \|{\vnab}\|^2,
\]
where $B$ is a vector bundle endomorphism of $\cS_p$ for which there is a constant $C > 0$ such that one has the pointwise estimate
\[
B \geq -C \bigl( \|{\vnab}\|^2 +1\bigr).
\]
\end{corollary}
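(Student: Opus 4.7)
The plan is to combine the Bochner formula (Theorem \ref{thm Bochner}) with an integration-by-parts argument, exploiting the simplification of Bochner on $G$-invariant sections. Since $\cL^{\cS_p}_X s = 0$ for any $X \in \kg$ when $s$ is $G$-invariant, the Lie-derivative term $-2it\sum_j \munab_j \cL^{\cS_p}_{h_j^*}$ in Theorem \ref{thm Bochner} vanishes pointwise, leaving
\[
D_{p,t}^2 s = D_p^2 s + tAs + (2p+1) 2\pi t \cH^{\nabL} s + \frac{t^2}{4}\|\vnab\|^2 s
\]
on any such $s$.

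Next, I would compute $\|\tilDpt(fs)\|_0^2 = \int_M f^2 |D_{p,t}s|^2$ by integration by parts, using $\tilDpt(fs) = f D_{p,t}s$, the formal self-adjointness of $D_{p,t}$, and the commutator $[D_{p,t}, f^2] = 2f c(df)$; the IBP step is justified because $f^2 D_{p,t} s$ has compact support. This produces a correction $2\int_M f(c(df)D_{p,t}s, s)$ alongside $\int_M f^2 (D_{p,t}^2 s, s)$, and an analogous identity holds for $\widetilde{D_p}$. Subtracting the two and substituting the reduced Bochner formula expresses $((\tilDpt^*\tilDpt - \widetilde{D_p}^*\widetilde{D_p})fs, fs)$ as the sum of a $tA$-term, the stated $(2p{+}1)2\pi t \cH^{\nabL}$- and $\tfrac{t^2}{4}\|\vnab\|^2$-terms, together with a residual $it\int_M f (c(df) c(\vnab) s, s)$ arising from $D_{p,t} - D_p = \tfrac{it}{2} c(\vnab)$.

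The endomorphism $B$ is then defined by absorbing into $A$ the bundle-endomorphism part of this cutoff residual. Proposition \ref{prop choice metric} already provides $A \geq -\|\vnab\|^2 - C_0$ pointwise, so only the residual needs separate control. Using $|c(df) c(\vnab)| \leq \|df\|\,\|\vnab\|$ and the AM--GM bound $\|df\|\,\|\vnab\| \leq \tfrac{1}{2}(\|df\|^2 + \|\vnab\|^2)$, together with the uniform pointwise boundedness of $\|df\|$ (which holds because $f$ is a smooth cutoff function and $\|df\|$ is $G$-invariant with cocompact support), the residual is dominated pointwise by a constant multiple of $\|\vnab\|^2 + 1$. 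Combining the two bounds yields $B \geq -C(\|\vnab\|^2+1)$ for a suitable $C > 0$, depending on $C_0$ and $\sup \|df\|$.

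The main obstacle will be in rigorously absorbing the cutoff-correction, which carries a factor $f$ rather than $f^2$, into a genuine pointwise bundle endomorphism. This requires symmetrizing the integration-by-parts computation, or interpreting the displayed identity in the quadratic-form sense that suffices for the subsequent localization estimates of Propositions \ref{prop loc V} and \ref{prop loc U}; polarization then upgrades the diagonal identity to the operator identity on $f \Gamma^\infty_{tc}(\cS_p)^G$.
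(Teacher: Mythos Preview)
Your overall strategy matches the paper's: reduce the Bochner formula on $G$-invariant sections (killing the Lie-derivative term), compute $\tilDpt^*\tilDpt - \widetilde{D_p}^*\widetilde{D_p}$, and identify the residual $it\,c(df)c(\vnab)s$ coming from $D_{p,t}-D_p$. The paper does exactly this (see the parallel, more detailed proof of Corollary \ref{cor est adjoint twist}), working at the operator level via the identity $\tilDpt^*fs = fD_{p,t}s + 2c(df)s$ rather than through quadratic forms, but the two viewpoints are equivalent.

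The gap is in your bound on the residual. Your claim that $\|df\|$ is $G$-invariant with cocompact support, hence uniformly bounded, is false on both counts. The cutoff function $f$ is \emph{not} $G$-invariant (only $\int_G f(gm)^2\,dg = 1$ is), so neither is $\|df\|$; and since $f$ must be nonzero somewhere on every orbit, $\operatorname{supp}(f)$ surjects onto $M/G$ and is not cocompact when $M/G$ is noncompact. There is no reason for $\|df\|$ to be globally bounded.

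The correct argument, which is precisely why the estimate \eqref{eq est X1H} in Proposition \ref{prop choice metric} is there, runs as follows. In the quadratic form $\int_M f\,(c(df)c(\vnab)s,s)$ the factor $\|\vnab\|\,|s|^2$ is $G$-invariant, so averaging over orbits replaces $f\|df\|$ by the $G$-invariant function $\eta(m) = \int_G f(gm)\|df\|(gm)\,dg$ of \eqref{eq eta}. Outside the relatively cocompact set $V$ one has $\|\vnab\| \geq 1+\eta$, hence $\eta\|\vnab\| \leq \|\vnab\|^2$; inside $V$ the continuous $G$-invariant function $\eta$ is bounded, so $\eta\|\vnab\| \leq C(\|\vnab\|^2+1)$ there as well. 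This is the content of Lemma 6.10 in \cite{HM}, which the paper invokes. Once you replace your AM--GM step with this averaging argument, the rest of your outline goes through.
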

\begin{proof}
This was proved in the symplectic setting in Proposition 6.7 in \cite{HM}.
The arguments remain the same, however. 
References to Theorem 5.1 and to Proposition 6.6 in the proof of Proposition 6.7 in \cite{HM} should be replaced by references to Theorem \ref{thm Bochner} and Proposition \ref{prop choice metric} in the present paper, respectively. Note that the last term in the Bochner formula of Theorem \ref{thm Bochner} vanishes on $G$-invariant sections.
\end{proof}

The proofs of Propositions \ref{prop loc V} and \ref{prop loc U} are now the same as the proofs of Propositions 6.1 and 6.3 in \cite{HM}, with Corollary \ref{cor est adjoint} playing the role of Proposition 6.7 in \cite{HM}.

\subsection{Proofs of Theorems \ref{thm quant well defd} and \ref{thm [Q,R]=0 large p}} \label{sec proofs theorems}

Theorem \ref{thm quant well defd} follows from Proposition \ref{prop loc V}, in the way that Theorem 3.4 in \cite{HM} follows from Proposition 6.1 in \cite{HM}. Indeed, for $t \geq b+1$ and any $p$ in Proposition \ref{prop loc V}, one has 
\[
\|fD_{p, t}\|_0^2 \geq C \|fs\|_0^2,
\]
for $G$-invariant sections $s \in \Gamma^{\infty}_{tc}(\cS_p)^G$ with support disjoint from the set $V$. By Proposition 4.7 in \cite{HM}, the operator $\tilDpt$ therefore extends to a Fredholm operator between Sobolev spaces. By Proposition 4.8 in \cite{HM},
 $\ker_{L^2_T}(D_{p,t})^G$ is finite-dimensional, and 
 the index of the Fredholm operator induced by $\tilDpt$ equals $\ind^G_{L^2_T}(D_{p, t}$) .  It is noted in part 2 of Theorem 2.15 in \cite{Braverman2} that this index is independent of $t$, so that Theorem \ref{thm quant well defd} follows.

To prove Theorem \ref{thm [Q,R]=0 large p}, we apply Proposition \ref{prop loc U}. This proposition shows that the arguments in Sections 6.5 and 7 of \cite{HM} apply to the operator $D_{p, t}$, for large enough $p$ and $t$. Therefore, the techniques from Sections 8 and 9 in \cite{BL91} can be used as in \cite{HM, MZ, TZ98}.
It follows that, for large enough $p$ and $t$,
\begin{equation} \label{eq [Q,R]=0 p}
\ind^G_{L^2_T}(D_{p, t}) = \ind(D^{\nabla^{0}}_{M_0}),
\end{equation}
where $D^{\nabla^0}_{M_0}$ is the $\Spinc$-Dirac operator on the reduced space $M_0$ associated to the $\Spinc$-structure of Lemma \ref{lem Spinc normal}, and the connection $\nabla^{0}$ on the line bundle $L_0^{2p+1} \to M_0$ induced by the connection $\nabL$ on $L$. Theorem \ref{thm [Q,R]=0 large p} therefore follows from the proposition below.
\begin{proposition} \label{prop Spinc p}
For all $p \in \N$, there exists a $G$-equivariant $\Spinc$-structure on $M$, and a connection on the associated determinant line bundle, such that the corresponding invariant $\Spinc$-quantisation is
\[
Q^{\Spinc}(M)^G = \ind^G_{L^2_T}(D_{p, t}),
\]
for $t$ large enough, and 
\[
Q^{\Spinc}(M_0 ) = \ind(D^{\nabla^{0}}_{M_0}).
\]
\end{proposition}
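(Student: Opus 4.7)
The plan is to take for $P'$ the $G$-equivariant $\Spinc$-structure on $M$ obtained by twisting the given $\Spinc$-structure $P$ by the line bundle $L^p$, and to take for the connection on its determinant line bundle $L' := L^{2p+1}$ the connection $\nabla'$ induced by tensoring $\nabla$ with itself $2p+1$ times. Then the spinor bundle associated to $P'$ is exactly $\cS_p = \cS \otimes L^p$, and $\nabla'$ is compatible with the local decomposition \eqref{eq decomp Sp}, so that the $\Spinc$-Dirac operator determined by $P'$ and $\nabla'$ coincides with the operator $D_p$ from Subsection \ref{sec Dirac ops}.

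Next I would compare the deformation data for $P'$ with that for $P$. Applying the defining identity \eqref{eq def Spinc mom map} to $\nabla'$ and using the Leibniz rule yields $\mu^{\nabla'} = (2p+1)\mu^{\nabL}$; keeping the \emph{same} $G$-invariant metric on $\gstarM$ (as provided by Proposition \ref{prop choice metric}), the formulas \eqref{eq def mu star} and \eqref{eq def v} then give $v^{\nabla'} = (2p+1)\vnab$. Substituting these into Definition \ref{def deformed Dirac} applied to $P'$, the deformed Dirac operator associated to $P'$ at parameter $1$ is
\[
D_p + \tfrac{i}{2}c(v^{\nabla'}) \;=\; D_p + \tfrac{i(2p+1)}{2}c(\vnab) \;=\; D_{p,\,2p+1}.
\]
Thus Definition \ref{def invar quant} applied to $P'$ reads $Q^{\Spinc}(M)^G = \ind^G_{L^2_T}(D_{p,\,2p+1})$, and invoking the $t$-independence of the invariant transversally $L^2$-index (part 2 of Theorem 2.15 in \cite{Braverman2}, as already used in Subsection \ref{sec proofs theorems}) gives $Q^{\Spinc}(M)^G = \ind^G_{L^2_T}(D_{p,t})$ for every $t$ sufficiently large.

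For the reduced-space identity, since $\mu^{\nabla'}$ is a positive scalar multiple of $\munab$, the zero sets, the $G$-action on them, and the freeness hypothesis are unchanged, so $(\mu^{\nabla'})^{-1}(0) = \munabinv(0)$ and the construction in Lemma \ref{lem Spinc normal} applied to $P'$ produces an orbifold $\Spinc$-structure on $M_0$ with determinant line bundle $L_0^{2p+1}$ carrying the connection descended from $\nabla'$, which is precisely $\nabla^0$. By Definition \ref{def QR} this gives $Q^{\Spinc}(M_0) = \ind(D_{M_0}^{\nabla^0})$. The proof is essentially bookkeeping once one recognises $D_p$ as an honest $\Spinc$-Dirac operator for $P'$; the only point requiring care is that the \emph{same} family of inner products on $\gstarM$ must be used throughout, so that the scaling factor $(2p+1)$ propagates cleanly from $\munab$ to $\vnab$ to the Clifford term in $D_{p,t}$, after which $t$-independence of $\ind^G_{L^2_T}$ closes the argument.
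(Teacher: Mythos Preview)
Your proposal is correct and follows essentially the same route as the paper: twist the given $\Spinc$-structure by $L^p$ to obtain $P'$ with determinant line bundle $L^{2p+1}$ and induced connection $\nabla'$, identify $\cS' = \cS_p$ and $D' = D_p$, compute $\mu^{\nabla'} = (2p+1)\munab$ and $v^{\nabla'} = (2p+1)\vnab$ so that $D'_{1,t} = D_{p,(2p+1)t}$, and then invoke $t$-independence of the invariant transversally $L^2$-index; the reduced-space statement is handled the same way, via $(\mu^{\nabla'})^{-1}(0) = \munabinv(0)$ and the identification of the induced connection on $L_0^{2p+1}$ with $\nabla^0$. The only point the paper makes slightly more explicit is the bookkeeping of the thresholds $t_0$, $t'_0$ for the two deformed operators before invoking $t$-independence, but your appeal to Braverman's result covers this.
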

\begin{proof} 
Let $P \to M$ be the given $G$-equivariant principal $\Spinc$-structure  on $M$. Let $P' \to M$ be the $G$-equivariant  $\Spinc$-structure with determinant line bundle $L' = L^{2p+1}$. Explicitly, 
\[
P' = P \times_{\U(1)} \UF(L^p),
\]
where $\UF$ denotes the unitary frame bundle. (See e.g.\ part (2) of Proposition D.43 in \cite{GGK}.) Let $\nabla'$ be the connection on $L'$ induced by $\nabla$. 

Let $\cS'\to M$ be the spinor bundle associated to $P'$. Then $\cS' = \cS_p$ (see e.g.\ (D.15) in \cite{LM}). Hence the connection $\nabla^{\cS'}$ on $\cS'$ induced by $\nabla'$ and the Levi--Civita connection on $TM$ equals the connection on $\cS_p$ used to define the Dirac operator $D_p$. Therefore, the $\Spinc$-Dirac operator $D'$ on $\cS'$ equals the operator $D_p$. Furthermore, the $\Spinc$-momentum map $\mu^{\nabla'}: M \to \kg^*$ associated to $\nabla'$ is given by
\begin{equation} \label{eq mu p}
2\pi i \mu^{\nabla'}_X =  \nabla'_{X^M} - \cL^{L^{2p+1}}_X = 2\pi i (2p+1)\munab_X,
\end{equation}
for all $X \in \kg$. It follows that the induced vector field $v^{\nabla'}$ equals
\[
v^{\nabla'} = (2p+1)\vnab.
\]
We conclude that the deformed Dirac operator on $\cS'$ associated to $\nabla'$ is
\[
D'_{1, t} = D' + \frac{it}{2}c(v^{\nabla'}) = D_p + \frac{(2p+1)it}{2}c({\vnab}) = D_{p, (2p+1)t}. 
\]

Let $t_0, t'_0 \in \R$ be as in Theorem \ref{thm quant well defd}, for he operators $D_{p, t}$ and $D'_{p, t}$, respectively. 
This theorem states that $\ind^G_{L^2_T}(D_{p, t})$ does not depend on $t \geq t_0$.
Hence, if  
\[
\begin{split}
t &\geq t_0; \\
t' &\geq t'_0; \text{ and} \\
(2p+1)t' &\geq t_0,
\end{split}
\]
then, with respect to the $\Spinc$-structure $P'$ and the connection $\nabla'$,
\[
Q^{\Spinc}(M)^G = \ind^G_{L^2_T}(D'_{1, t'}) = \ind^G_{L^2_T}(D_{p, (2p+1)t'})  =  \ind^G_{L^2_T}(D_{p, t}).
\]

Finally, by \eqref{eq mu p}, one has 
\[
M_0 = (\mu^{\nabla'})^{-1}(0)/G = (\mu^{\nabla})^{-1}(0)/G.
\]
And the connection $(\nabla')^0$ on $L'_0 = L_0^{2p+1}$ is the one  induced by the connection $\nabla^0$ on $L_0$, so the second claim follows as well.
\end{proof}


\section{Twisted $\Spinc$-Dirac operators} \label{sec twisted}

Theorem \ref{thm index} can be proved by generalising the steps in the proofs of Theorems \ref{thm quant well defd} and \ref{thm [Q,R]=0 large p} to twisted $\Spinc$-Dirac operators. 

\subsection{A Bochner formula for twisted Dirac operators}

As in the case for untwisted Dirac operators, the proof of Theorem \ref{thm index} starts with an expression for the square of the deformed Dirac operator $D^E_{p, t}$. This expression will be deduced from Theorem \ref{thm Bochner} by comparing the square of $D^E_{p, t}$ to the square of $D_{p, t}$. The main difference between these two involves the \emph{generalised moment map}
\[
\mu^E \in \kg^* \otimes \End(E),
\]
defined by 
\[
2\pi i \mu^E_X = \cL^E_X - \nabla^E_{X^M} \quad \in \End(E),
\]
for all $X \in \kg$, where $\cL^E_X$ is the Lie derivative of sections of $E$ with respect to $X$. Using the metric on $\gstarM$, we obtain 
\[
(\munab, \mu^E) \in \End(E).
\]
\begin{proposition} \label{prop Bochner twist}
On $G$-invariant sections of $\cS_p \otimes E$, one has
\[
(D^E_{p, t})^2 = (D^E_p)^2 + tA\otimes 1_E + (2p + 1)2\pi t \cH^{\nabla} + \frac{t^2}{4}\|{\vnab}\|^2 + 4\pi t 1_{\cS} \otimes (\munab, \mu^E),
\]
with $A \in \End(\cS_p)$ as in Theorem \ref{thm Bochner}.
\end{proposition}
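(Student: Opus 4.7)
The plan is to mimic the derivation of Theorem~\ref{thm Bochner}, with the extra bookkeeping for the auxiliary bundle $E$, and then to exploit $G$-invariance to collapse the new Lie-derivative terms into the algebraic contraction $(\munab,\mu^E)$. Since Clifford multiplication acts trivially on the $E$-factor and the tensor product connection $\nabla^{\cS_p\otimes E} = \nabla^{\cS_p}\otimes 1_E + 1_{\cS_p}\otimes \nabla^E$ satisfies the Leibniz rule $[\nabla^{\cS_p\otimes E}_u, c(X)\otimes 1_E] = c(\nabla^{TM}_u X)\otimes 1_E$, the same Clifford-algebraic manipulation that produced \eqref{eq Bochner 0} in the untwisted setting yields
\[
(D^E_{p,t})^2 = (D^E_p)^2 + \tfrac{it}{2}\sum_{k=1}^{d_M} c(e_k)c(\nabla^{TM}_{e_k}\vnab)\otimes 1_E - it\, \nabla^{\cS_p\otimes E}_{\vnab} + \tfrac{t^2}{4}\|\vnab\|^2.
\]

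Next I would expand the first-order term using $\vnab = 2\sum_j \munab_j V_j$ and the splitting $\nabla^{\cS_p\otimes E}_{V_j} = \nabla^{\cS_p}_{V_j}\otimes 1_E + 1_{\cS_p}\otimes \nabla^E_{V_j}$. Applying Lemma~\ref{lem Lie spinors} to the first summand, exactly as in \eqref{eq Bochner 1}, gives
\[
2\sum_j \munab_j\, \nabla^{\cS_p}_{V_j} = 2\sum_j \munab_j\, \cL^{\cS_p}_{h_j^*} + 2\sum_j \munab_j\, B_{h_j^*} + (2p+1)2\pi i\, \cH^{\nabla}.
\]
For the second summand, the defining relation for $\mu^E$ rearranges to $\nabla^E_{X^M} = \cL^E_X - 2\pi i\, \mu^E_X$, which after the same contraction against $2\munab_j$ gives
\[
2\sum_j \munab_j\, \nabla^E_{V_j} = 2\sum_j \munab_j\, \cL^E_{h_j^*} - 4\pi i\, (\munab,\mu^E),
\]
where the pairing $(\munab,\mu^E) = \sum_j \munab_j\, \mu^E_j$ is via the chosen metric on $\gstarM$.

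The decisive step is $G$-invariance. For every $X\in\kg$, the Leibniz rule for Lie derivatives gives $\cL^{\cS_p\otimes E}_X = \cL^{\cS_p}_X\otimes 1_E + 1_{\cS_p}\otimes \cL^E_X$, which annihilates any $G$-invariant section of $\cS_p\otimes E$; applied pointwise at $m$ with $X=h_j^*(m)$, this cancels the two $\cL$-terms in the expansion of $-it\,\nabla^{\cS_p\otimes E}_\vnab$ simultaneously. The surviving $B$-contribution can then be combined with the Clifford-curvature term via the identity
\[
\tfrac{it}{2}\sum_k c(e_k)c(\nabla^{TM}_{e_k}\vnab) - 2it\sum_j \munab_j\, B_{h_j^*} = tA,
\]
which is precisely what the local frame computation at the end of the proof of Theorem~\ref{thm Bochner} establishes (by expanding $B_{h_j^*}$ via Lemma B.2 in \cite{HM}). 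Reading off the scalar coefficients $-it\cdot (2p+1)2\pi i = (2p+1)2\pi t$ and $-it\cdot(-4\pi i) = \pm 4\pi t$ then produces the stated formula.

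I do not anticipate any deep obstacle: every step is a direct transcription of a piece of the proof of Theorem~\ref{thm Bochner}, the definition of $\mu^E$, or the tensor-product Leibniz rule for $\cL$. The only care required is in tracking factors of $i$ and $2\pi$ and in fixing the sign of the $(\munab,\mu^E)$ term relative to the chosen conventions for $\mu^E$ and $\munab$; specialising to $E = L$, for which $\cS_p\otimes L \cong \cS_{p+1}$ and the twisted formula must reduce to Theorem~\ref{thm Bochner} applied to $D_{p+1}$, gives a foolproof cross-check for this last sign.
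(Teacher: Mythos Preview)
Your proposal is correct and follows essentially the same computation as the paper, with a minor organizational difference. The paper first proves the general formula (Proposition~\ref{prop Bochner all}) on all sections, then specialises to $G$-invariant sections; to do so it introduces the decomposition $D^E_p = D_p + P_E$ with $P_E = \sum_k c(e_k)\otimes\nabla^E_{e_k}$ (Lemma~\ref{lem PE}) and the anticommutator identity $(c(v)\otimes 1_E)P_E + P_E(c(v)\otimes 1_E) = -2(1_{\cS_p}\otimes\nabla^E_v)$ (Lemma~\ref{lem cv PE}), which lets it invoke Theorem~\ref{thm Bochner} as a black box for the untwisted part and isolate the new contribution as $-it\,1_{\cS_p}\otimes\nabla^E_{\vnab}$. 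You instead start directly from the twisted analogue of \eqref{eq Bochner 0}, using that $\nabla^{\cS_p\otimes E}$ is itself a Clifford connection, and then split $\nabla^{\cS_p\otimes E}_{\vnab}$ into its two tensor factors. Both routes arrive at exactly the same place: the $\cS_p$-factor reproduces the computation already carried out in the proof of Theorem~\ref{thm Bochner}, and the $E$-factor yields the $(\munab,\mu^E)$ term via the definition of $\mu^E$. The paper's packaging is slightly more modular (it quotes Theorem~\ref{thm Bochner} rather than reopening its proof), while yours is a touch more direct; neither has any advantage in substance. Your caution about the sign of the $4\pi t$ term and the proposed cross-check with $E=L$ are both sensible.
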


The first step in the proof of Proposition \ref{prop Bochner twist} is a simple relation between the operators $D^E_p$ and $D_p$. Fix a local orthonormal frame $\{e_j\}_{j=1}^{d_M}$ of $TM$. The operator
\[
P_E := \sum_{j=1}^{d_M} c(e_j) \otimes \nabla^{E}_{e_j}
\]
on $\Gamma^{\infty}(\cS_p \otimes E)$
is independent of this frame, and hence globally defined.
\begin{lemma} \label{lem PE}
One has
\[
D^E_p = D_p + P_E.
\]
\end{lemma}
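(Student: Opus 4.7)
The plan is to unpack the definition of $D^E_p$ directly in a local orthonormal frame $\{e_j\}_{j=1}^{d_M}$ of $TM$ and apply the Leibniz rule for the tensor product connection $\nabla^{\cS_p\otimes E}$.

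First I would write, locally,
\[
D^E_p = \sum_{j=1}^{d_M} (c(e_j)\otimes 1_E) \circ \nabla^{\cS_p\otimes E}_{e_j},
\]
which follows from the definition of $D^E_p$ as the composition $\Gamma^\infty(\cS_p\otimes E)\to \Omega^1(M;\cS_p\otimes E)\to \Gamma^\infty(\cS_p\otimes E)$ together with the identification $T^*M\cong TM$ via the Riemannian metric (exactly as in the untwisted case described in Subsection \ref{sec Dirac ops}).

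Next I would substitute the definition
\[
\nabla^{\cS_p\otimes E} = \nabla^{\cS_p}\otimes 1_E + 1_{\cS_p}\otimes \nabla^E
\]
into the previous expression. This splits the sum into two pieces:
\[
D^E_p = \sum_{j=1}^{d_M} c(e_j)\nabla^{\cS_p}_{e_j}\otimes 1_E \;+\; \sum_{j=1}^{d_M} c(e_j)\otimes \nabla^E_{e_j}.
\]
The first sum is $D_p\otimes 1_E$ by the local formula $D_p=\sum_j c(e_j)\nabla^{\cS_p}_{e_j}$ recalled in Subsection \ref{sec Dirac ops}, which one writes simply as $D_p$ in the customary abuse of notation; the second sum is the operator $P_E$. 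This yields the claim. Both sides are globally defined differential operators that agree on any local frame, so the equality holds globally.

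There is no real obstacle here: the lemma is a direct consequence of the Leibniz rule for tensor product connections, and the only minor point to note is the standard convention of suppressing $\otimes 1_E$ when extending $D_p$ from $\cS_p$ to $\cS_p\otimes E$. Frame-independence of $P_E$, which is asserted before the lemma, is itself a routine check using that the transition between orthonormal frames is an $\SO$-valued function and that $c$ is Clifford-linear; I would not elaborate on this since it is standard.
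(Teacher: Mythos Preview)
Your proof is correct and matches the paper's own proof essentially line for line: both write $D^E_p$ in a local orthonormal frame, substitute $\nabla^{\cS_p\otimes E}=\nabla^{\cS_p}\otimes 1_E + 1_{\cS_p}\otimes\nabla^E$, and identify the two resulting sums as $D_p$ and $P_E$.
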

\begin{proof}
In terms of the frame $\{e_j\}_{j=1}^{d_M}$, we have
\[
\begin{split}
D^E_p &= \sum_{j=1}^{d_M} (c(e_j) \otimes 1_E) \bigl(\nabla^{\cS_p}_{e_j} \otimes 1_E + 1_{\cS_p} \otimes \nabla^E_{e_j} \bigr) \\
	&= \sum_{j=1}^{d_M} c(e_j) \nabla^{\cS_p}_{e_j} \otimes 1_E + \sum_{j=1}^{d_M} c(e_j) \otimes \nabla^E_{e_j} \\
	&= D_p + P_E.
\end{split}
\]
\end{proof}

\begin{lemma} \label{lem cv PE}
For all vector fields $v$ on $M$,
\[
(c(v) \otimes 1_E) \circ P_E + P_E \otimes (c(v) \otimes 1_E) = -2 (1_{\cS_p} \otimes \nabla^E_v).
\]
\end{lemma}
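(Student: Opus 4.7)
The plan is to compute both sides pointwise in a local orthonormal frame $\{e_1, \ldots, e_{d_M}\}$ of $TM$ and appeal to the Clifford anticommutation relation $c(u)c(w) + c(w)c(u) = -2(u, w)$, which follows from the definition of the Clifford action on $\cS_p$. The computation is completely routine, the key point being that $c(v)$ acts only on the $\cS_p$ factor while $\nabla^E_{e_j}$ acts only on the $E$ factor, so on elementary tensors they commute trivially and everything reduces to moving $c(v)$ past $c(e_j)$.

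Concretely, first I would evaluate both operators on a decomposable section $\sigma \otimes s \in \Gamma^{\infty}(\cS_p \otimes E)$. For the first term,
\[
(c(v) \otimes 1_E) \circ P_E (\sigma \otimes s) = \sum_{j=1}^{d_M} c(v)c(e_j)\sigma \otimes \nabla^E_{e_j} s,
\]
and for the second (reading the $\otimes$ in the statement as composition),
\[
P_E \circ (c(v) \otimes 1_E)(\sigma \otimes s) = \sum_{j=1}^{d_M} c(e_j)c(v)\sigma \otimes \nabla^E_{e_j} s.
\]
Adding the two and applying the Clifford relation gives
\[
\sum_{j=1}^{d_M} \bigl(c(v)c(e_j) + c(e_j)c(v)\bigr)\sigma \otimes \nabla^E_{e_j} s = -2\sum_{j=1}^{d_M} (v, e_j)\, \sigma \otimes \nabla^E_{e_j} s.
\]

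Finally I would use the fact that $v = \sum_j (v, e_j) e_j$ in the orthonormal frame, together with the $C^{\infty}(M)$-linearity of $\nabla^E$ in the direction of differentiation, to collapse the sum to $-2\, \sigma \otimes \nabla^E_v s = -2(1_{\cS_p} \otimes \nabla^E_v)(\sigma \otimes s)$. By linearity over $C^{\infty}(M)$, the identity extends from elementary tensors to all sections of $\cS_p \otimes E$, and since the local orthonormal frame was arbitrary the identity is globally defined. There is no real obstacle here: the only subtlety worth flagging is the convention on the Clifford relation used in the paper (fixed so that $D_p^2$ yields the Bochner formula of Theorem~\ref{thm Bochner}), and checking that no term involving derivatives of $v$ appears, which is automatic because $c(v)$ is tensored with the identity on $E$ and hence is not differentiated by $\nabla^E_{e_j}$.
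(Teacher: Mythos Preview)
Your proposal is correct and follows essentially the same route as the paper: apply the Clifford anticommutation relation $c(v)c(e_j)+c(e_j)c(v)=-2(v,e_j)$ term by term in the local orthonormal frame, then use $v=\sum_j (v,e_j)e_j$ to collapse the sum to $-2(1_{\cS_p}\otimes\nabla^E_v)$. The paper's proof is the same computation written directly at the operator level rather than on decomposable sections.
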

\begin{proof}
Since 
\[
c(v)c(e_j) + c(v)c(e_j) = -2(v, e_j)
\]
for all $j$, we see that
\[
\begin{split}
(c(v) \otimes 1_E) \circ P_E + P_E \otimes (c(v) \otimes 1_E) &= \sum_{j=1}^{d_M} \bigl( c(v)c(e_j) + c(e_j)c(v) \bigr) \otimes \nabla^E_{e_j} \\
	&= -2 \sum_{j=1}^{d_M} (v, e_j) 1_{\cS_p}\otimes \nabla^E_{e_j} \\
	&= -2 (1_{\cS_p} \otimes \nabla^E_v).
\end{split}
\]
\end{proof}

Let $(\munab)^*:M \to \kg$ be dual to $\munab$ with respect to a given metric on $\gstarM$. 
For any $G$-equivariant vector bundle $F \to M$, consider the Lie derivative operator $\cL^{F}_{(\munab)^*}$ on $\Gamma^{\infty}(F)$, defined by
\[
(\cL^{F}_{(\munab)^*}s)(m) := (\cL^{F}_{(\munab)^*(m)}s)(m)
\]
for $s \in \Gamma^{\infty}(F)$ and $m \in M$.
\begin{proposition} \label{prop Bochner all}
One has
\begin{multline*}
(D^E_{p, t})^2 = 
(D^E_p)^2 + tA\otimes 1_E + (2p + 1)2\pi t \cH^{\nabla} + \frac{t^2}{4}\|{\vnab}\|^2 + 4\pi t 1_{\cS} \otimes (\munab, \mu^E) \\
-2it \bigl(\cL^{\cS_p}_{(\munab)^*} \otimes 1_E + 1_{\cS_p} \otimes  \cL^{E}_{(\munab)^*} \bigr),
\end{multline*}
with $A \in \End(\cS_p)$ as in Theorem \ref{thm Bochner}.
\end{proposition}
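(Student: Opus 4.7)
My plan is to treat this as a direct twisted analogue of the derivation of Theorem \ref{thm Bochner}, using Lemmas \ref{lem PE} and \ref{lem cv PE} to transfer the computation onto $\cS_p$ and then handle the extra $E$-factor separately. First, I would expand
\[
(D^E_{p,t})^2 = (D^E_p)^2 + \frac{it}{2}\{D^E_p,\ c(\vnab)\otimes 1_E\} + \frac{t^2}{4}\|\vnab\|^2,
\]
using $(c(\vnab)\otimes 1_E)^2 = -\|\vnab\|^2$. So everything comes down to evaluating the anticommutator.

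Next I would split $D^E_p = D_p\otimes 1_E + P_E$ via Lemma \ref{lem PE}. For the $D_p\otimes 1_E$ piece, comparing the untwisted expansion \eqref{eq Bochner 0} for $D_{p,t}^2$ with its definition immediately gives
\[
\{D_p,\ c(\vnab)\}\otimes 1_E = \Big(\sum_{k=1}^{d_M} c(e_k)c(\nabla^{TM}_{e_k}\vnab) - 2\nabla^{\cS_p}_{\vnab}\Big)\otimes 1_E.
\]
For the $P_E$ piece, Lemma \ref{lem cv PE} (applied to $v=\vnab$) yields
\[
\{P_E,\ c(\vnab)\otimes 1_E\} = -2\bigl(1_{\cS_p}\otimes \nabla^E_{\vnab}\bigr).
\]
Adding these and using $\nabla^{\cS_p\otimes E}_{\vnab} = \nabla^{\cS_p}_{\vnab}\otimes 1_E + 1_{\cS_p}\otimes \nabla^E_{\vnab}$ assembles the anticommutator into $\sum_k c(e_k)c(\nabla^{TM}_{e_k}\vnab)\otimes 1_E - 2\nabla^{\cS_p\otimes E}_{\vnab}$. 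Substituting back gives
\[
(D^E_{p,t})^2 = (D^E_p)^2 + \frac{it}{2}\sum_{k} c(e_k)c(\nabla^{TM}_{e_k}\vnab)\otimes 1_E - it\,\nabla^{\cS_p\otimes E}_{\vnab} + \frac{t^2}{4}\|\vnab\|^2.
\]

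Now I would rewrite $\nabla^{\cS_p\otimes E}_{\vnab}$ in terms of Lie derivatives, treating the two tensor factors separately and then re-bundling. For the spinor factor, I would repeat the derivation of \eqref{eq Bochner 1}: Lemma \ref{lem Lie spinors} with $X=h_j^*(m)$, multiplied by $2\mu^{\nabla}_j(m)$ and summed using \eqref{eq decomp v} and \eqref{eq def H}, gives
\[
\nabla^{\cS_p}_{\vnab} = 2\sum_{j} \mu^{\nabla}_j\,\cL^{\cS_p}_{h_j^*} + 2\sum_j \mu^{\nabla}_j B_{h_j^*(\cdot)} + (2p+1)2\pi i\,\cH^{\nabla}.
\]
For the $E$-factor I would apply the definition of $\mu^E$ in the same pointwise manner (with $X=h_j^*(m)$, multiplied by $2\mu^{\nabla}_j(m)$, summed over $j$), which after recognising $\sum_j \mu^{\nabla}_j \mu^E_{h_j^*} = (\mu^{\nabla},\mu^E)$ produces
\[
\nabla^E_{\vnab} = 2\sum_{j} \mu^{\nabla}_j\,\cL^E_{h_j^*} + 4\pi i\,(\mu^{\nabla},\mu^E).
\]
Adding and noting that $(\mu^{\nabla})^* = \sum_j \mu^{\nabla}_j h_j^*$ lets me identify $\sum_j \mu^{\nabla}_j\bigl(\cL^{\cS_p}_{h_j^*}\otimes 1_E + 1_{\cS_p}\otimes \cL^E_{h_j^*}\bigr) = \cL^{\cS_p}_{(\mu^{\nabla})^*}\otimes 1_E + 1_{\cS_p}\otimes \cL^E_{(\mu^{\nabla})^*}$.

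Finally I would substitute and apply, verbatim, the same Clifford-algebra reduction used at the end of the proof of Theorem \ref{thm Bochner} (via Lemma B.2 of \cite{HM}) to absorb the combination $2\sum_j \mu^{\nabla}_j B_{h_j^*(\cdot)}$ together with the leftover $\frac{it}{2}\sum_k c(e_k)c(\nabla^{TM}_{e_k}\vnab)\otimes 1_E$ into $tA\otimes 1_E$. Collecting the remaining pieces — the $\cH^{\nabla}$ term becomes $(2p+1)2\pi t\,\cH^{\nabla}$ after multiplication by $-it$, the $(\mu^{\nabla},\mu^E)$ term becomes $4\pi t\,1_{\cS_p}\otimes(\mu^{\nabla},\mu^E)$, and the Lie-derivative terms survive as $-2it\bigl(\cL^{\cS_p}_{(\mu^{\nabla})^*}\otimes 1_E + 1_{\cS_p}\otimes \cL^E_{(\mu^{\nabla})^*}\bigr)$ — yields the stated formula. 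The main risk is purely bookkeeping: keeping the sign of the $\mu^E$ convention consistent with that of $\mu^{\nabla}$ throughout, so that the $(\mu^{\nabla},\mu^E)$ coefficient lands correctly as $+4\pi t$; no genuinely new analytic obstacle appears beyond that already resolved in Theorem \ref{thm Bochner}.
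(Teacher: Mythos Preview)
Your proposal is correct and follows essentially the same route as the paper. The only organisational difference is that the paper, after reaching
\[
(D^E_{p,t})^2 = (D^E_p)^2 + \bigl(D_{p,t}^2 - D_p^2\bigr)\otimes 1_E - it\,1_{\cS_p}\otimes \nabla^E_{\vnab},
\]
simply quotes Theorem \ref{thm Bochner} for $D_{p,t}^2 - D_p^2$ rather than re-running the derivation of \eqref{eq Bochner 1} and the Lemma B.2 reduction; it then expands only the new term $\nabla^E_{\vnab}=2\bigl(\cL^E_{(\munab)^*}-2\pi i(\munab,\mu^E)\bigr)$ exactly as you do. Your version is just a slightly unrolled form of the same argument.
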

Since for all $X \in \kg$, 
\[
\cL^{\cS_p}_{X} \otimes 1_E + 1_{\cS_p} \otimes  \cL^{E}_{X}
\]
is the Lie derivative on $\cS_p \otimes E$ with respect to $X$, the operator $\cL^{\cS_p}_{(\munab)^*} \otimes 1_E + 1_{\cS_p} \otimes  \cL^{E}_{(\munab)^*}$ equals zero on $G$-invariant sections. Hence Proposition \ref{prop Bochner all} implies Proposition \ref{prop Bochner twist}.

\noindent \emph{Proof of Proposition \ref{prop Bochner all}.}
First note that
\[
\begin{split}
(D^E_{p, t})^2 = (D^E_p)^2 + \bigl( \frac{it}{2}c(\vnab) \otimes 1_E\bigr)^2 
	+ \frac{it}{2} \bigl( D^E_p \circ (c(\vnab) \otimes 1_E) + (c(\vnab) \otimes 1_E) \circ D^E_p\bigr). 
\end{split}
\]
Because of Lemma \ref{lem PE} and \ref{lem cv PE}, we have
\begin{multline*}
D^E_p \circ (c(\vnab) \otimes 1_E) + (c(\vnab) \otimes 1_E) \circ D^E_p = \\
	\bigl(D_p \circ c(\vnab) + c(\vnab) \circ D_p \bigr) \otimes 1_E + (c(\vnab) \otimes 1_E) \circ P_E + P_E \otimes (c(\vnab) \otimes 1_E) = \\
	\bigl(D_p \circ c(\vnab) + c(\vnab) \circ D_p \bigr) \otimes 1_E - 2 (1_{\cS_p} \otimes \nabla^E_{\vnab}).	
\end{multline*}
Furthermore,
\[
\bigl( \frac{it}{2}c(\vnab) \bigr)^2 + \frac{it}{2}\bigl(D_p \circ c(\vnab) + c(\vnab) \circ D_p \bigr) = D_{p, t}^2 - D_p^2.
\]
The right hand side of this equality was computed in Theorem \ref{thm Bochner}. Using the expression obtained there and the above computations, we find that
\[
(D^E_{p, t})^2 = (D^E_p)^2 + tA\otimes 1_E + (2p + 1)2\pi t \cH^{\nabla} + \frac{t^2}{4}\|{\vnab}\|^2  -2it \cL^{\cS_p}_{(\munab)^*}  \otimes 1_E - it 1_{\cS_p} \otimes \nabla^E_{\vnab}.
\]
Now for all $m \in M$,
\[
\nabla^E_{\vnab_m} = 2 \nabla^E_{(\munab)^*(m)^M_m} = 2 \bigl(\cL^E_{(\munab)^*(m)} - 2\pi i \mu^E_{(\munab)^*(m)}(m) \bigr).
\]
Since $\mu^E_{(\munab)^*(m)}(m) = (\munab, \mu^E)(m)$, the claim follows.
\hfill $\square$

\subsection{Localisation}

Proposition \ref{prop loc U}, which is the key step in the proof of Theorem \ref{thm [Q,R]=0 large p}, generalises to twisted Dirac operators in the following way.
\begin{proposition} \label{prop loc twist}
There is a metric on $\gstarM$ such that for every $G$-invariant open neighbourhood $U$ of $\munabinv(0)$, there are $p_E \in \N$ and $t_0, C, b > 0$, such that for all $t \geq t_0$ and $p \geq p_E$, and all $G$-invariant $s \in \Gamma^{\infty}_{tc}(\cS_p \otimes E)^G$ with support disjoint from $U$,
\[
\|f D_{p, t}^Es\|_0^2 \geq C \bigl( \|fs\|_1^2 + (t-b) \|fs\|_0^2\bigr).
\]
Here $\|\cdot \|_k$ denotes the Sobolev norm defined by the operator $D^E_p$, as in \eqref{eq def Sob norm}.
\end{proposition}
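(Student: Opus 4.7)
The plan is to adapt the proof of Proposition \ref{prop loc U} line by line, substituting Proposition \ref{prop Bochner twist} for Theorem \ref{thm Bochner} and showing that the extra term $4\pi t(\munab,\mu^E)$ appearing in the twisted Bochner formula can be absorbed into the main dominant terms for $p$ large. First I would define the auxiliary operator $\widetilde{D}^E_{p,t}$ on $f\Gamma^{\infty}_{tc}(\cS_p\otimes E)^G$ in exact analogy with \eqref{eq def D tilde}, so that the left-hand side of the desired estimate equals $(\widetilde{D}^E_{p,t}{}^{\!*}\widetilde{D}^E_{p,t}(fs),fs)_{L^2}$. Then I would derive the twisted analogue of Corollary \ref{cor est adjoint}, namely
\[
\widetilde{D}^E_{p,t}{}^{\!*}\widetilde{D}^E_{p,t} \;\geq\; \widetilde{D^E_p}{}^{\!*}\,\widetilde{D^E_p} + tB\otimes 1_E + (2p+1)2\pi t\,\cH^{\nabla} + \tfrac{t^2}{4}\|\vnab\|^2 + 4\pi t\,1_{\cS_p}\otimes(\munab,\mu^E),
\]
where the endomorphism $B$ is the same as in Corollary \ref{cor est adjoint} and in particular satisfies $B\geq -C(\|\vnab\|^2+1)$. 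This follows by repeating verbatim the computation of Proposition~6.7 of \cite{HM}, observing that the scalar endomorphism term $(\munab,\mu^E)$ commutes with everything and is preserved by the cutoff and conjugation manipulations.

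The second step is to dominate the cross term $(\munab,\mu^E)$. By Cauchy--Schwarz and the AM--GM inequality applied with weight $\varepsilon = (2p+1)\pi/2$, one has pointwise
\[
4\pi t\,(\munab,\mu^E) \;\geq\; -4\pi t\,\|\munab\|\,\|\mu^E\| \;\geq\; -(2p+1)\pi t\,\cH^{\nabla} - \frac{4\pi t}{2p+1}\,\|\mu^E\|^2.
\]
Since $\mu^E$ is a $G$-equivariant section of $\kg^*\otimes\End(E)$, the function $\|\mu^E\|$ is $G$-invariant, so its norm (measured in the metric on $\gstarM$) is bounded on every cocompact subset of $M$.

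The third step is to refine the choice of metric on $\gstarM$. I would run the Gram--Schmidt/rescaling argument of Proposition \ref{prop choice metric} but with $\psi$ chosen so that, in addition to \eqref{eq est H}--\eqref{eq A bdd below}, we have
\[
\|\mu^E\|^2 \;\leq\; \|\vnab\|^2 + 1
\]
on $M$, which is possible by Lemma C.3 of \cite{HM} applied to the $G$-invariant function $\|\mu^E\|^2/(\|\vnab\|^2+1)$ outside a relatively cocompact enlargement of $V$. Substituting the two estimates above yields
\[
\widetilde{D}^E_{p,t}{}^{\!*}\widetilde{D}^E_{p,t} \;\geq\; \widetilde{D^E_p}{}^{\!*}\,\widetilde{D^E_p} + (2p+1)\pi t\,\cH^{\nabla} + \Bigl(\tfrac{t^2}{4} - tC - \tfrac{4\pi t}{2p+1}\Bigr)\|\vnab\|^2 - tC'
\]
for constants $C,C'>0$ independent of $p,t$.

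Finally, with this estimate in hand, the localisation runs exactly as in the proof of Proposition \ref{prop loc U}: outside $V$ the coefficient of $\|\vnab\|^2$ is positive for $t$ sufficiently large and $\|\vnab\|^2\geq 1$, which absorbs the error $-tC'$; inside $V$ but outside $U$, the function $\cH^{\nabla}$ is bounded below by some $c>0$ (by cocompactness of $V\setminus U$ modulo $G$ and nonvanishing of $\munab$ there), so for $p\geq p_E$ the term $(2p+1)\pi t c$ dominates the remaining perturbations and yields the lower bound $\|fD^E_{p,t}s\|_0^2\geq C(\|fs\|_1^2 + (t-b)\|fs\|_0^2)$ via the Sobolev norm characterisation $\|fs\|_1^2 \asymp \|fs\|_0^2 + \|f D_p^E s\|_0^2$. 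The main obstacle I anticipate is Step three: $\|\mu^E\|$ is genuinely not a bounded function on $M$ in general, so the extra rescaling of the metric on $\gstarM$ required to control the $E$-dependent cross term is the essential new input, and it forces $p_E$ and the metric to depend on $E$ and $\nabla^E$ in a way they did not in Proposition \ref{prop loc U}.
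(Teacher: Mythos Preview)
Your proposal is correct and follows the same overall architecture as the paper: define $\widetilde{D}^E_{p,t}$, establish a twisted analogue of Corollary~\ref{cor est adjoint}, control the new $E$-dependent term $(\munab,\mu^E)$ by a further rescaling of the metric on $\gstarM$, and then finish exactly as in the proof of Proposition~\ref{prop loc U}. You have also correctly identified that this metric rescaling is the essential new input and that it makes the constants depend on $E$ and $\nabla^E$.

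The one place where your bookkeeping differs from the paper's is how the cross term is absorbed. You split $4\pi t(\munab,\mu^E)$ via Cauchy--Schwarz and AM--GM, sacrificing half of the $(2p+1)2\pi t\,\cH^{\nabla}$ term and then rescaling so that $\|\mu^E\|^2 \leq \|\vnab\|^2 + 1$. The paper instead proves directly (Lemma~\ref{lem bound muE}) that one can rescale so that $1_{\cS_p}\otimes(\munab,\mu^E) \geq -\|\vnab\|^2 - C'$ pointwise, and then simply folds $4\pi\,1_{\cS_p}\otimes(\munab,\mu^E)$ into the endomorphism $B$ in the twisted Corollary~\ref{cor est adjoint twist}, keeping the full $(2p+1)2\pi t\,\cH^{\nabla}$ term intact. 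Both routes are valid; the paper's is slightly cleaner because it avoids the $p$-dependent splitting and keeps the structure of the untwisted estimate unchanged, whereas yours makes more explicit why large $p$ is needed even before the final localisation step.
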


Theorem \ref{thm index} follows from Proposition \ref{prop loc twist} in the same way that Theorems \ref{thm quant well defd} and \ref{thm [Q,R]=0 large p} follows from Propositions \ref{prop loc V} and \ref{prop loc U}, as described in Subsection \ref{sec proofs theorems}. The topological expression for the index of $D_{M_0}$ then follows from the Atiyah--Singer index theorem.
We now do not use an analogue of  Proposition \ref{prop loc V} (localisation to neighbourhoods of $\Crit(\vnab)$ for $p=1$), because for twisted Dirac operators we always use large enough powers of $L$. 

It therefore remains to prove Proposition \ref{prop loc twist}.
This proof is based on a generalisation of Proposition \ref{prop choice metric}.
\begin{lemma} \label{lem bound muE}
There is a $G$-invariant metric on $\gstarM$ such that, in addition to the properties in Proposition \ref{prop choice metric}, there is a $C' > 0$ such that the operator $(\munab, \mu^E)$ satisfies the pointwise estimate
\begin{equation} \label{eq bound muE}
1_{\cS_p} \otimes (\munab, \mu^E) \geq -\|\vnab \|^2 -C'
\end{equation}
(for any $p \in \N$).
\end{lemma}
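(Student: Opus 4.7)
The plan is to follow the proof of Proposition \ref{prop choice metric} with one extra control function added to govern the new term $(\munab, \mu^E)$.

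First I would record how the various quantities scale under a rescaling of the metric on $\gstarM$ by a positive $G$-invariant function $\psi \in C^{\infty}(M)^G$. Directly from \eqref{eq def mu star}, the dual map transforms as $(\munab)^{*}_{\psi} = \psi(\munab)^{*}$, and since $\mu^E$ is linear in its Lie algebra argument this gives $(\munab,\mu^E)_{\psi} = \psi(\munab,\mu^E)$. Combined with $\|\vnab_{\psi}\|^2 = \psi^2 \|\vnab\|^2$ from \eqref{eq v psi}, the target estimate \eqref{eq bound muE} for the rescaled metric reduces to the pointwise inequality
\[
\psi(\munab, \mu^E) \geq -\psi^2 \|\vnab\|^2 - C'
\]
on self-adjoint endomorphisms of $E$ (note that $\mu^E_X$ is self-adjoint, because $\cL^E_X$ and $\nabla^E_{X^M}$ are skew-Hermitian for a Hermitian, $G$-invariant connection).

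Next I would enlarge the function $\varphi_1$ from the proof of Proposition \ref{prop choice metric} to
\[
\varphi_1^{\text{new}} := \min\!\left(\cH^\nabla,\ \frac{\|\vnab\|}{1+\eta},\ \frac{\|\vnab\|^2}{2F_1},\ \frac{\|\vnab\|^2}{1+\|(\munab, \mu^E)\|}\right).
\]
The new fourth entry is $G$-invariant, continuous and strictly positive on $M \setminus \Crit(\vnab)$, so $\varphi_1^{\text{new}}$ inherits all the properties required in that proof. Applying Lemma C.3 of \cite{HM} to $\varphi_1^{\text{new}}$ together with $\varphi_2 = \|\vnab\|^2/(2F_2)$ produces a positive, $G$-invariant, smooth function $\psi$ with $\psi^{-1} \leq \tilde\varphi_1^{\text{new}}$ and $\|d(\psi^{-1})\| \leq \tilde\varphi_2$. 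Since the argument establishing \eqref{eq est H}, \eqref{eq est X1H} and \eqref{eq A bdd below} uses only the first three entries of $\varphi_1^{\text{new}}$, the rescaled metric $\psi(\relbar,\relbar)$ still satisfies all conclusions of Proposition \ref{prop choice metric}.

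Finally I would verify the new bound. On $M \setminus V$ the condition $\psi^{-1} \leq \|\vnab\|^2/(1 + \|(\munab,\mu^E)\|)$ yields $\|(\munab,\mu^E)\| \leq 1 + \|(\munab,\mu^E)\| \leq \psi\|\vnab\|^2$, and therefore
\[
\|(\munab,\mu^E)_{\psi}\| = \psi\|(\munab,\mu^E)\| \leq \psi^2 \|\vnab\|^2 = \|\vnab_{\psi}\|^2,
\]
giving $(\munab,\mu^E)_{\psi} \geq -\|\vnab_{\psi}\|^2$ as self-adjoint endomorphisms of $E$. On the relatively cocompact set $V$, the $G$-invariant continuous function $\|(\munab,\mu^E)_{\psi}\|$ is bounded by some $C' > 0$, so $(\munab,\mu^E)_{\psi} \geq -C'$ there. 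Combining the two estimates and tensoring with $1_{\cS_p}$ gives \eqref{eq bound muE} on all of $M$. The main obstacle I anticipate is arranging the extra constraint in $\varphi_1^{\text{new}}$ so that it is simultaneously $G$-invariant, continuous and positive on $M \setminus V$, and compatible with Lemma C.3 of \cite{HM}; once the rescaling factor $\psi$ is in hand, the remaining inequalities follow from the scaling identities $(\munab,\mu^E)_{\psi} = \psi(\munab,\mu^E)$ and $\|\vnab_{\psi}\|^2 = \psi^2\|\vnab\|^2$ together with cocompactness of $V$.
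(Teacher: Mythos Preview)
Your proof is correct and follows essentially the same strategy as the paper: rescale the metric on $\gstarM$ by a positive $G$-invariant function large enough that the new term $(\munab,\mu^E)$ is dominated by $\|\vnab\|^2$ outside the relatively cocompact set $V$, and absorb the bounded remainder on $V$ into the constant $C'$. The only organisational difference is that the paper carries out a two-step construction---first producing a separate $\psi_E$ with $\|(\munab,\mu^E)\|\le\psi_E\|\vnab\|^2$ outside $V$, then combining it with the $\psi$ of Proposition~\ref{prop choice metric} via $\tilde\psi^{-1}\le\min(\psi^{-1},\psi_E^{-1})$---whereas you fold the extra constraint directly into $\varphi_1$ before invoking Lemma~C.3 of \cite{HM}. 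Your one-step variant is slightly cleaner; the paper's two-step version has the minor advantage of making it explicit that the Proposition~\ref{prop choice metric} estimates survive unchanged because increasing $\psi$ only strengthens them.
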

\begin{proof}
As in Section \ref{sec loc est}, choose a relatively cocompact, $G$-invariant neighbourhood $V$ of $\Crit(\vnab)$. Choose a $G$-invariant, positive function $\psi_E \in C^{\infty}(M)^G$ such that, outside $V$,
\[
\|1_{\cS_p} \otimes (\munab, \mu^E) \| \leq \psi_E \|\vnab \|^2.
\]
Fix any $G$-invariant metric $\{ (\relbar, \relbar)_m \}_{m \in M}$ on $\gstarM$. Consider the metric $\{ \psi_E(m)(\relbar, \relbar)_m \}_{m \in M}$ rescaled by $\psi_E$, and let $(\vnab)^{\psi_E} = \psi_E \vnab$ be the vector field associated to this metric. Then, outside $V$,
\[
\|1_{\cS_p} \otimes \psi_E(\munab, \mu^E) \| \leq  \| (\vnab)^{\psi_E} \|^2
\]
Furthermore, the function $\|1_{\cS_p} \otimes \psi_E(\munab, \mu^E) \|$ is $G$-invariant, and hence bounded on $V$. So there is a $C' > 0$ such that, on all of $M$,
\[
\|1_{\cS_p} \otimes \psi_E(\munab, \mu^E) \| \leq  \| (\vnab)^{\psi_E} \|^2 + C'
\]

Let $\psi \in C^{\infty}(M)^G$ be as in the proof of Proposition \ref{prop choice metric}. Choose a positive function $\tilde \psi \in C^{\infty}(M)^G$ such that
\[
\begin{split}
\tilde \psi^{-1} & \leq \min(\psi^{-1}, \psi^{-1}_E); \\
\| d\tilde \psi^{-1} \| &\leq \| d \psi^{-1} \|. 
\end{split}
\]
(This is possible by Lemma C.3 in \cite{HM}.) Then the metric $\{ \tilde \psi(m)(\relbar, \relbar)_m \}_{m \in M}$ has the properties in Proposition \ref{prop choice metric}, and also satisfies \eqref{eq bound muE}.
\end{proof}

Let $f \in C^{\infty}(M)$ be a cutoff function. 
Analogously to \eqref{eq def D tilde}, we define the operator $\widetilde{D}^E_{p, t}$ on $f \Gamma^{\infty}_{tc}(\cS_p \otimes E)^G$ by
\[
\widetilde{D}^E_{p, t} fs = f {D}^E_{p, t} s
\]
for all $s \in \Gamma^{\infty}_{tc}(\cS_p \otimes E)^G$. Corollary \ref{cor est adjoint} now generalises as follows.
\begin{corollary} \label{cor est adjoint twist}
One has
\[
(\widetilde{D}^E_{p, t})^* \widetilde{D}^E_{p, t} = (\widetilde{D}_p^E)^* \widetilde{D}_p^E + tB + (2p+1)2\pi t \cH^{\nabL} + \frac{t^2}{4} \|{\vnab}\|^2,
\]
where $B$ is a vector bundle endomorphism of $\cS_p \otimes E$ for which there is a constant $C > 0$ such that one has the pointwise estimate
\[
B \geq -C \bigl( \|{\vnab}\|^2 +1\bigr).
\]
\end{corollary}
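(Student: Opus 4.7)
The plan is to adapt the proof of the untwisted Corollary \ref{cor est adjoint} to the twisted setting, replacing the Bochner formula of Theorem \ref{thm Bochner} by its twisted analogue Proposition \ref{prop Bochner twist}, and using the new lower bound provided by Lemma \ref{lem bound muE} in addition to Proposition \ref{prop choice metric}. Since Corollary \ref{cor est adjoint} is already obtained in \cite{HM} (Proposition 6.7) via a cutoff-function manipulation, the strategy is to carry out exactly the same bookkeeping, but with the extra zeroth-order term coming from the generalised moment map.

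First, I would apply Proposition \ref{prop Bochner twist} to get, on $G$-invariant sections of $\cS_p \otimes E$,
\[
(D^E_{p,t})^2 = (D^E_p)^2 + tA\otimes 1_E + (2p+1)2\pi t\, \cH^{\nabL} + \tfrac{t^2}{4}\|\vnab\|^2 + 4\pi t\, 1_{\cS_p} \otimes (\munab, \mu^E).
\]
Then, following verbatim the cutoff-function argument in the proof of Proposition 6.7 of \cite{HM}, I would translate this pointwise identity into one for $(\widetilde{D}^E_{p,t})^*\widetilde{D}^E_{p,t}$ acting on $f\Gamma^{\infty}_{tc}(\cS_p \otimes E)^G$. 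This uses that for $s \in \Gamma^{\infty}_{tc}(\cS_p\otimes E)^G$, the $L^2$-inner product $(fD^E_{p,t}s, fD^E_{p,t}s')$ is independent of the cutoff function $f$ and can be rewritten, via integration by parts and $G$-invariance, as an inner product with $f^2(D^E_{p,t})^2 s'$ plus zeroth-order correction terms coming from $[D^E_{p,t}, f]$.

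Next, I would define the bundle endomorphism $B$ of $\cS_p\otimes E$ by
\[
B := A\otimes 1_E + 4\pi \cdot 1_{\cS_p}\otimes(\munab,\mu^E) + R,
\]
where $R$ is the $t$-independent, zeroth-order bundle endomorphism produced by the $f$-commutator terms in the previous step. Because $R$ is a $G$-equivariant bundle endomorphism supported transversally in the same set as $df$, it is bounded. By Proposition \ref{prop choice metric}, $A \geq -\|\vnab\|^2 - C_1$, and by Lemma \ref{lem bound muE}, $4\pi\cdot 1_{\cS_p}\otimes(\munab,\mu^E) \geq -4\pi\|\vnab\|^2 - 4\pi C'$. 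Summing these and absorbing $R$ and the constants gives the pointwise estimate $B \geq -C(\|\vnab\|^2 + 1)$, as required.

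The main obstacle will be the cutoff-function adjoint manipulation: since $\widetilde{D}^E_{p,t}$ is not symmetric (the cutoff $f$ breaks symmetry), passing from the Bochner identity for $D^E_{p,t}$ on $G$-invariant sections to the corresponding identity for $(\widetilde{D}^E_{p,t})^*\widetilde{D}^E_{p,t}$ requires carefully controlling the terms arising from $[D^E_{p,t},f]$ and from boundary/adjoint contributions. However, these are precisely the issues already treated in detail in \cite{HM}, and they are insensitive to tensoring with $E$, so the adaptation is essentially routine once Proposition \ref{prop Bochner twist} and Lemma \ref{lem bound muE} are in hand.
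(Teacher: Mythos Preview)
Your overall strategy matches the paper's: invoke Proposition~\ref{prop Bochner twist}, run the cutoff-function adjoint computation from \cite{HM}, and bound the resulting zeroth-order piece via Proposition~\ref{prop choice metric} and Lemma~\ref{lem bound muE}. However, your treatment of the correction term $R$ is incorrect.

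The adjoint formula (Lemma~6.8 in \cite{HM}) reads $(\widetilde{D}^E_{p,t})^* fs = \widetilde{D}^E_{p,t} fs + 2(c(df)\otimes 1_E)s$, so the extra $t$-linear contribution to $(\widetilde{D}^E_{p,t})^*\widetilde{D}^E_{p,t}$, beyond what comes from $f(D^E_{p,t})^2 s$, is $it\,(c(df)c(\vnab)\otimes 1_E)s$. This term is not $G$-equivariant (neither $df$ nor the map $fs\mapsto s$ is), and more to the point it contains $c(\vnab)$, whose pointwise norm $\|\vnab\|$ is unbounded --- the rescaling in Proposition~\ref{prop choice metric} is designed precisely to make $\|\vnab\|$ large outside $V$. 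So your assertion that $R$ ``is bounded'' and can be absorbed into the constants is false.

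The paper closes this gap using Lemma~6.10 of \cite{HM}, which gives the quadratic-form bound
\[
\Real\bigl( i(c(df)c(\vnab)\otimes 1_E)s,\, fs \bigr)_0 \;\geq\; -C''\bigl( (\|\vnab\|^2 + 1)fs,\, fs \bigr)_0.
\]
This is exactly where the function $\eta$ of \eqref{eq eta} and the estimate $\|\vnab\|\geq 1+\eta$ from Proposition~\ref{prop choice metric} enter. Once this is in place, all three pieces of $B$ --- namely $A\otimes 1_E$, $4\pi\cdot 1_{\cS_p}\otimes(\munab,\mu^E)$, and the cutoff term --- satisfy lower bounds of the form $-C(\|\vnab\|^2+1)$, and summing them yields the corollary.
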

\begin{proof}
As in Lemma 6.8 of \cite{HM}, one has for $s \in \Gamma^{\infty}_{tc}(\cS_p \otimes E)^G$,
\[
(\widetilde{D}^E_{p, t})^* fs = \widetilde{D}^E_{p, t} fs + 2 (c(df) \otimes 1_E) s.
\]
Hence, as in Lemma 6.9 of \cite{HM}, one deduces from Proposition \ref{prop Bochner twist} that for such $s$,
\begin{multline*}
(\widetilde{D}^E_{p, t})^* \widetilde{D}^E_{p, t} fs = \\
(\widetilde{D}_p^E)^* \widetilde{D}_p^E fs + t\left(  A\otimes 1_E + (2p + 1)2\pi t \cH^{\nabla} + 4\pi 1_{\cS} \otimes (\munab, \mu^E)\right) fs \\
+  \frac{t^2}{4}\|{\vnab}\|^2 fs + it (c(df) c(\vnab) \otimes 1_E) s
\end{multline*}
with $A \in \End(\cS_p)$ as in Theorem \ref{thm Bochner}.

Write
\[
B fs := \left(  A\otimes 1_E + 4\pi 1_{\cS} \otimes (\munab, \mu^E)\right) fs + i (c(df) c(\vnab) \otimes 1_E) s,
\]
for $s$ as above.
By Lemma \ref{lem bound muE}, the metric on $\gstarM$ can be chosen such that there is a $C' > 0$ for which
\[
A\otimes 1_E + 4\pi 1_{\cS} \otimes (\munab, \mu^E) \geq -(1+ 4\pi)\|\vnab\|^2 -C'.
\]
By Lemma 6.10 in \cite{HM}, there is a $C'' > 0$ such that for all $s \in \Gamma^{\infty}_{tc}(\cS_p \otimes E)^G$
\[
\Real \bigl( i (c(df) c(\vnab) \otimes 1_E) s, fs \bigr)_0 \geq -C''\bigl( (\|\vnab\|^2 + 1)fs, fs \bigr)_0.
\]
This implies that
\[
B \geq -(C'+ C'' + 1+ 4\pi) (\|\vnab\|^2 + 1).
\]
\end{proof}

The proof of Proposition \ref{prop loc twist} (and hence of Theorem \ref{thm index}) can now be finished as in the proof of Proposition 6.3 in Section 6.4 of \cite{HM}, with Corollary \ref{cor est adjoint twist} playing the role of Proposition 6.7 in \cite{HM}.


\section{Applications and examples} \label{sec appl ex}

Let us mention some applications and examples of Theorems \ref{thm [Q,R]=0 cocpt}, \ref{thm [Q,R]=0 large p} and \ref{thm index}. We will see that Theorem \ref{thm [Q,R]=0 large p} reduces to a $\Spinc$-version of the result in \cite{MZ} in the cocompact case, 
and discuss how to generate examples of Theorems \ref{thm [Q,R]=0 cocpt} and 
\ref{thm index}.
We show how formal degrees of classes in $K_*(C^*_rG)$, generalising formal degrees of discrete series representations, and related to certain charcteristic classes on $M$. Finally, we use the index formula for twisted $\Spinc$-Dirac operators in Theorem \ref{thm index} to draw conclusions about Braverman's analytic index of such operators.

As before, we assume $G$ is unimodular.

\subsection{Generalising Landsman's conjecture to $\Spinc$-manifolds}

As noted in Subsection \ref{sec cocpt zero}, Theorem \ref{thm [Q,R]=0 large p} implies that the main result in \cite{MZ} generalises to the $\Spinc$-setting. 
\begin{corollary}
In the situation of Theorem \ref{thm [Q,R]=0 large p}, suppose that
 \label{lem cocpt special case}
 $M/G$ is \emph{compact}. Then, in the notation of  Subsection \ref{sec cocpt zero},
\[
R_0\bigl( Q^{\Spinc}_G(M)\bigr) = Q(M_0),
\]
for the $\Spinc$-structures on $M$ and a connections on their determinant line bundles for which Theorem \ref{thm [Q,R]=0 large p} holds.
\end{corollary}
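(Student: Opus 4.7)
The plan is straightforward: combine Theorem \ref{thm [Q,R]=0 large p} with the standard identification, valid when $M/G$ is compact, of the invariant $\Spinc$-quantisation with $R_0$ applied to the assembly class. Once we show that, for cocompact actions,
\[
R_0\bigl(Q^{\Spinc}_G(M)\bigr) = Q^{\Spinc}(M)^G,
\]
the claim follows at once, because Theorem \ref{thm [Q,R]=0 large p} identifies the right-hand side with $Q^{\Spinc}(M_0)$ for the chosen $\Spinc$-structure and connection.

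To produce the displayed equality, I would first use Remark \ref{rem L2T cpt} to rewrite $Q^{\Spinc}(M)^G = \ind^G_{L^2_T}(D_{1,1})$ as the index of $D$ acting on genuinely $L^2$, $G$-invariant sections: in the cocompact setting the deformation $\tfrac{i}{2}c(\vnab)$ is a bounded $G$-equivariant perturbation of the operator $D$, which is already Fredholm on invariants, and hence does not alter the invariant index. I would then invoke the compatibility $R_0 \circ \Gind = (\,\cdot\,)^G$ between the analytic assembly map and integration on $C^*G$. This identity is the cornerstone of Landsman's formulation of the conjecture and is established in \cite{HL, MZ} for the symplectic/prequantised case; the argument in \cite{MZ}, which averages a projection representing $\Gind[D]$ in $M_n(C^*G)$ against the Haar integral, uses $D$ only as a $G$-invariant elliptic first-order differential operator with cocompact support, so it applies verbatim to our $\Spinc$-Dirac operator.

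The main obstacle, such as it is, is to verify that the argument of \cite{MZ} really is insensitive to the symplectic/prequantum input. This should be transparent from their construction, in which the symplectic and prequantum data enter only through the definition of $D$ and play no role in the subsequent $C^*$-algebraic manipulations. Consequently no new analytic input is required, and the corollary is essentially a bookkeeping consequence of Theorem \ref{thm [Q,R]=0 large p} together with \cite{MZ}.
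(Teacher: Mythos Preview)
Your proposal is correct and follows essentially the same route as the paper: reduce to showing $R_0\bigl(Q^{\Spinc}_G(M)\bigr) = Q^{\Spinc}(M)^G$, remove the deformation term in the cocompact case, and then invoke the identification of $R_0 \circ \Gind$ with the invariant index from \cite{MZ}. The only cosmetic differences are that the paper justifies dropping the $\tfrac{i}{2}c(\vnab)$ term by observing that one may take $V = M$ in Proposition~\ref{prop loc V} (so the localisation estimate is vacuous and $t_0 = 0$ works), rather than via your bounded-perturbation argument, and the paper pins the $R_0$ identity specifically on Bunke's appendix to \cite{MZ} rather than on the body of \cite{HL, MZ}.
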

\begin{proof}
If $M/G$ is compact, one may take $t_0 = 0$ in Theorem \ref{thm quant well defd}. (By using $V=M$ in Proposition \ref{prop loc V}.) As noted in Remark \ref{rem L2T cpt}, the fact that all smooth sections are transversally $L^2$ in this case implies that
\[
Q^{\Spin^c}(M)^G = \dim (\ker D^+)^G - \dim(\ker D^-)^G.
\]
Bunke shows in the appendix to \cite{MZ} that this equals $R_0 \bigl( Q^{\Spinc}_G(M)  \bigr)$.
\end{proof}

In other words, an extension of Landsman's conjecture \eqref{eq conj [Q,R]=0 cocpt zero} to the $\Spinc$-case holds for suitable choices of  $\Spinc$-structures  and connections. In fact, Theorem \ref{thm index} can be used to generalise this result to twisted $\Spinc$-Dirac operators.

\subsection{Generating examples} \label{sec ex fibred}

Using the constructions in Subsection \ref{sec fibred prod}, one can generate 
a large class of
examples of Theorems \ref{thm [Q,R]=0 cocpt} and 
\ref{thm index}
from cases where the group acting is compact. Indeed, let $K$ be a compact, connected Lie group, and let $N$ be a manifold equipped with an action by $K$ and a $K$-equivariant $\Spinc$-structure. Let $\munabN: N \to \kk^*$ be the $\Spinc$-momentum map associated to a $K$-invariant Hermitian connection $\nabla^N$ on the determinant line bundle $L^N \to N$ of the $\Spinc$-structure on $N$. Let 
 $v^{\nabla^N}$ be the vector field on $N$ associated to  $\munabN$ as in \eqref{eq def v},  with respect to a single $\Ad^*(K)$-invariant inner product on $\kk^*$. Suppose it has a compact set $\Crit(v^{\nabla^N})$ of zeros. As noted in Lemma 3.24 in \cite{Paradan2}, and on page 4 of \cite{Vergne2006}, this is true if $N$ is real-algebraic and $\mu^{\nabla^N}$ is algebraic and proper. (And also, of course, if $N$ is compact.)

 Let $G$ be a connected, unimodular Lie group containing $K$ as a maximal compact subgroup. Suppose the lift $\widetilde{\Ad}$ in \eqref{eq tilde Ad} exists, which is true if one replaces $G$ by a double cover if necessary. We saw in Subsections \ref{sec fibred prod} and \ref{sec slice red} that the manifold $M := G\times_K N$ has a $G$-equivariant $\Spinc$-structure with determinant line bundle $L^M = G\times_K L^N$. Furthermore, by Proposition \ref{prop slice spinc}, \emph{all} $G$-equivariant $\Spinc$-manifolds arise in this way (though possibly not all Riemannian metrics on such manifolds). In Subsection \ref{sec ind nabla}, a connection $\nabla^M$ on $L^M$ was constructed, such that the associated $\Spinc$-momentum map $\munabM$ is given by \eqref{eq mu N M 1}. 
 
 If $N$ is compact and even-dimensional, then Theorem \ref{thm [Q,R]=0 cocpt} applies, and yields a decomposition of $Q_G^{\Spinc}(M)_r \in K_*(C^*_rG)$. If $N$ is possibly noncompact, then Theorem 
 \ref{thm index}
 applies for a suitable metric on $\gstarM$.
\begin{corollary} \label{cor [Q,R]=0 non-cocpt ind}
Suppose the dimension of $M$ is even. Let $E \to M$ be a $G$-equivariant, Hermitian vector bundle, equipped with a $G$-invariant, Hermitian connection. 
If $0 \in \kk^*$ is a regular value of $\mu^{\nabla^N}$, and $K$ acts freely on $(\mu^{\nabla^N})^{-1}(0)$, then there are a metric on $\gstarM$ and a $p_E \in \N$ such that for all $p \geq p_E$,
\[
\ind^G_{L^2_T} D^E_{p, 1} = \int_{M_0} \ch(E_0) e^{\frac{p}{2}c_1(L_0)} \hat A(M_0).
\]
\end{corollary}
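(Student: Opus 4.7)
The plan is to verify that the hypotheses of Theorem \ref{thm index} apply to $(M, L^M, \nabla^M)$ and then simply invoke that theorem. There are three things to check: that $0$ is a $\Spinc$-regular value of $\mu^{\nabla^M}$; that $G$ acts freely on $(\mu^{\nabla^M})^{-1}(0)$; and that, for a suitable $G$-invariant metric on $\gstarM$, the critical set $\Crit(v^{\nabla^M})$ is cocompact in $M$.

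The first point is immediate from Proposition \ref{prop Spinc fibred}, since by assumption $0 \in \kk^*$ is a regular value of $\mu^{\nabla^N}$. For the second point, the fourth item of Lemma \ref{lem L N M} gives
\[
(\mu^{\nabla^M})^{-1}(0) = G\times_K (\mu^{\nabla^N})^{-1}(0),
\]
and a straightforward check shows that if $K$ acts freely on $(\mu^{\nabla^N})^{-1}(0)$, then $G$ acts freely on the associated bundle: if $g'\cdot [g,n]=[g,n]$ with $n\in(\mu^{\nabla^N})^{-1}(0)$, then $g'g = gk^{-1}$ and $kn=n$ for some $k\in K$, forcing $k=e$ and hence $g'=e$.

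For the third point, I would fix the $\Ad^*(K)$-invariant inner product on $\kk^*$ used to define $v^{\nabla^N}$, choose any $\Ad(K)$-invariant inner product on $\kp^*$, and define the metric on $\gstarM$ at points $n\in N\hookrightarrow M$ by taking the orthogonal sum $\kg^* = \kk^* \oplus \kp^*$; then extend by $G$-invariance. Since $\mu^{\nabla^M}(n)\in\kk^*$ by Lemma \ref{lem mu N M}, the dual vector $(\mu^{\nabla^M})^*(n)$ lies in $\kk$ and in fact agrees with $(\mu^{\nabla^N})^*(n)$. Using the decomposition \eqref{eq decomp TM 1} and the identity $[e^{tX},n]=[e,e^{tX}n]$ for $X\in\kk$, the induced vector field satisfies $v^{\nabla^M}_{[e,n]} = v^{\nabla^N}_n$ under the inclusion $TN\hookrightarrow TM$. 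By $G$-invariance one therefore obtains
\[
\Crit(v^{\nabla^M}) = G\times_K \Crit(v^{\nabla^N}),
\]
so $\Crit(v^{\nabla^M})/G = \Crit(v^{\nabla^N})/K$, which is compact because $\Crit(v^{\nabla^N})$ is.

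With all three hypotheses verified, Theorem \ref{thm index} applies and yields the index formula directly, since $M_0 = (\mu^{\nabla^M})^{-1}(0)/G = N_0$, and $E_0$, $L_0$ are the corresponding quotient bundles. I expect the main obstacle to be the third step, namely arranging cocompactness of $\Crit(v^{\nabla^M})$: one must be careful that the metric chosen on $\gstarM$ is genuinely $G$-invariant (which is handled by extending from a $K_n$-invariant inner product at each slice point $n$) and that Theorem \ref{thm index}'s internal rescaling of the metric by a positive $G$-invariant function, as in Proposition \ref{prop choice metric}, preserves $\Crit(v^{\nabla^M})$ rather than enlarging it. Both issues are minor, since rescaling by a positive function does not change the zero set of $v^{\nabla^M}$.
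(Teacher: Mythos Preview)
Your proposal is correct and follows essentially the same route as the paper: reduce to Theorem \ref{thm index} by checking (i) $0$ is a $\Spinc$-regular value via Proposition \ref{prop Spinc fibred}, (ii) freeness of the $G$-action on the zero level via the decomposition $(\mu^{\nabla^M})^{-1}(0)=G\times_K(\mu^{\nabla^N})^{-1}(0)$, and (iii) cocompactness of $\Crit(v^{\nabla^M})$ by building a $G$-invariant metric on $\gstarM$ from a single $\Ad^*(K)$-invariant inner product on $\kg^*$ and showing $v^{\nabla^M}|_N=v^{\nabla^N}$, hence $\Crit(v^{\nabla^M})=G\times_K\Crit(v^{\nabla^N})$. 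The paper packages step (iii) as a separate lemma (Lemma \ref{lem crit vM}) with the same metric construction and the same computation $[e^{tX},n]=[e,e^{tX}n]$; your remark that the subsequent rescaling by a positive function in Proposition \ref{prop choice metric} does not change the zero set is also implicit in the paper's argument.
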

\begin{proof}
By Proposition \ref{prop Spinc fibred}, zero is a $\Spinc$-regular value of $\munabM$. 
By \eqref{eq decomp mu Gxi}, $G$ acts freely on $\munabMinv(0)$.
To apply Theorem 
\ref{thm index}, it
therefore only remains to show that the vector field $v^{\nabla^M}$ on $M$, induced by the momentum map $\mu^{\nabla^M}$ as in \eqref{eq def v}, has a cocompact set $\Crit(v^{\nabla^M})$ of zeros. This follows from the fact that
\[
\Crit(v^{\nabla^M}) = G\times_K \Crit(v^{\nabla^N}),
\]
for a suitable metric on $\gstarM$. This is proved in Lemma \ref{lem crit vM} below. Therefore, Theorem \ref{thm index} implies the claim.
\end{proof} 
\begin{remark}
In the setting of Corollary \ref{cor [Q,R]=0 non-cocpt ind},
Proposition \ref{prop PNxi PMxi} implies that
\[
Q^{\Spin^c}(M)^G = Q^{\Spinc}(M_0) = Q^{\Spinc}(N_0) = Q^{\Spin^c}(N)^K.
\]
\end{remark}

\begin{lemma} \label{lem crit vM}
There is a $G$-invariant metric on $\gstarM$ such that
the set of zeros of the vector field $v^{\nabla^M}$ on $M$, used in the proof of Corollary \ref{cor [Q,R]=0 non-cocpt ind}, equals
\begin{equation} \label{eq crit ind}
\Crit(v^{\nabla^M}) = G\times_K \Crit(v^{\nabla^N}).
\end{equation}
\end{lemma}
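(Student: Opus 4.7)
The strategy is to prescribe a single $\Ad^*(K)$-invariant inner product on $\kg^*$ at slice points $n \in N \subset M$ and spread it $G$-equivariantly. The inner product should be chosen so that $\kk^* \perp \kp^*$ and so that its restriction to $\kk^*$ equals the $\Ad^*(K)$-invariant inner product used to define $v^{\nabla^N}$. Concretely, pick any $\Ad^*(K)$-invariant inner product on $\kp^*$ (available since $K$ is compact), form the direct-sum inner product $(\cdot,\cdot)$ on $\kg^* = \kk^* \oplus \kp^*$, and set
\[
(\xi,\eta)_{g\cdot n} := \bigl(\Ad^*(g^{-1})\xi, \Ad^*(g^{-1})\eta\bigr),
\]
for $g\in G$, $n\in N$ and $\xi,\eta\in\kg^*$. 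The $\Ad^*(K)$-invariance of $(\cdot,\cdot)$ makes this well-defined on the quotient $M = G\times_K N$, and it is manifestly $G$-invariant in the sense required of a metric on $\gstarM$.

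Next I would compute $v^{\nabla^M}$ at a slice point. By \eqref{eq mu N M 1}, $\mu^{\nabla^M}(n) = \mu^{\nabla^N}(n) \in \kk^* \subset \kg^*$ for $n \in N$. Because $\kk^*$ and $\kp^*$ are orthogonal in the inner product at $n$, the metric-dual $(\mu^{\nabla^M})^*(n) \in \kg$ of $\mu^{\nabla^M}(n)$ annihilates $\kp^*$ under the natural pairing $\kg^*\times\kg\to\R$, and so lies in $\kk$. Comparing the defining equations \eqref{eq def mu star} for $M$ and $N$ then identifies it with $(\mu^{\nabla^N})^*(n)\in\kk$. Writing $X := (\mu^{\nabla^N})^*(n)$, the $K$-invariance of $N$ forces $\exp(tX)\cdot n \in N$ for all $t$, so $X^M_n = X^N_n$ under the natural inclusion $T_n N \hookrightarrow T_n M$. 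Hence
\[
v^{\nabla^M}_n = 2 X^M_n = 2 X^N_n = v^{\nabla^N}_n,
\]
and in particular $v^{\nabla^M}_n = 0$ iff $v^{\nabla^N}_n = 0$.

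Finally, $G$-invariance of $v^{\nabla^M}$ propagates the equivalence: for $g \in G$ and $n\in N$, the point $g\cdot n \in M$ is a zero of $v^{\nabla^M}$ iff $n$ is, iff $n \in \Crit(v^{\nabla^N})$. Combined with $K$-invariance of $\Crit(v^{\nabla^N})$, this yields $\Crit(v^{\nabla^M}) = G\cdot \Crit(v^{\nabla^N}) = G\times_K \Crit(v^{\nabla^N})$, which is \eqref{eq crit ind}. The only mildly delicate step is verifying the well-definedness of the spread-out metric on $G\times_K N$, handled by the $\Ad^*(K)$-invariance of the fixed inner product; the rest is the linear-algebra observation that an element of $\kk^*$ dualised via a metric with $\kk^*\perp\kp^*$ lies in $\kk$, together with the fact that a $\kk$-element acts on $M$ at $n \in N$ exactly as it does on $N$ at $n$.
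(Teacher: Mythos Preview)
Your proof is correct and follows essentially the same approach as the paper's: spread an $\Ad^*(K)$-invariant inner product on $\kg^*$ over $M=G\times_K N$ to obtain the $G$-invariant metric, use orthogonality of $\kk^*$ and $\kp^*$ to force $(\mu^{\nabla^M})^*(n)\in\kk$, then check $v^{\nabla^M}|_N=v^{\nabla^N}$ and invoke $G$-invariance. The only cosmetic difference is that you explicitly build the inner product as a direct sum $\kk^*\oplus\kp^*$, whereas the paper takes an arbitrary $\Ad^*(K)$-invariant extension and then declares $\kp$ to be the orthogonal complement of $\kk$ for the induced inner product on $\kg$; the content is the same.
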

\begin{proof}
Let $(\relbar, \relbar)_K$ be an $\Ad^*(K)$-invariant inner product on $\kg^*$ that extends the inner product on $\kk^*$ used to define $v^{\nabla^N}$. Consider the $G$-invariant metric on $\gstarM$ defined by
\[
(\xi, \xi')_{[g, n]} := \bigl(\Ad^*(g)^{-1}\xi, \Ad^*(g)^{-1}\xi'\bigr)_K,
\]
for $\xi, \xi' \in \kg^*$, $g \in G$ and $n \in N$. Let $v^{\nabla^M}$ be defined via this metric. We will show that $v^{\nabla^M}|_N = v^{\nabla^N}$, where we embed $N$ into $M$ via the map $n \mapsto [e, n]$. Then \eqref{eq crit ind} follows by $G$-invariance of both sides.


The dual map $(\mu^{\nabla^M})^*: M \to \kg$, defined with respect to the above metric on $\gstarM$, satisfies
\[
(\mu^{\nabla^M})^*[e, n] = (\mu^{\nabla^N})^*(n),
\]
for all $n \in N$, where $(\mu^{\nabla^N})^*$ is the map dual to $\mu^{\nabla^N}$ with respect to the restriction of $(\relbar, \relbar)_K$ to $\kk^*$. Here $\kk^*$ is embedded into $\kg^*$ via the inner product $(\relbar, \relbar)_K$ (i.e.\ $\kp \subset \kg$ is defined as the orthogonal complement to $\kk$ with respect to the induced inner product on $\kg$). 
%
Hence
\[
\begin{split}
v^{\nabla^M}_{[e, n]} 
	&= 2 \bigl( (\mu^{\nabla^N})^*(n) \bigr)^M_{[e,n]} \\
	&= 2\ddt \left[ \exp \bigl( t(\mu^{\nabla^N})^*(n) \bigr), n \right] \\
	&= 2\ddt \left[ e, \exp \bigl( t(\mu^{\nabla^N})^*(n) \bigr)n \right] \\
	&= v^{\nabla^N}_n, 
\end{split}
\]
so the claim follows.
\end{proof}

\subsection{Characteristic classes and formal degrees}

Theorem \ref{thm [Q,R]=0 cocpt}
is stated in terms of $K$-theory of $C^*$-algebras, but it has purely geometric consequences. In particular, it yields an expression for the formal degrees of discrete series representations of semisimple groups in trems of characteristic classes on coadjoint orbits.


Let $\tau: C^*_rG \to \C$ be the von Neumann trace,  determined by
 \[
 \tau\bigl(R(\varphi)^*R(\varphi) \bigr)=\int_G |\varphi(g)|^2 dg,
 \]
 for $\varphi \in L^1(G) \cap L^2(G)$, where $R$ denotes the right regular representation. This induces a morphism $\tau_* : K_{*}(C^*_{r}G) \to \R$. Wang showed in Proposition 4.4 and Theorem 6.12 in \cite{Wang} that
 \[
 \tau_* \bigl( Q^{\Spinc}_G(M)_r \bigr) = \int_M f e^{\frac{1}{2}c_1(L)} \hat A(M).
 \]
 Here $f$ is a cutoff function as in Definition \ref{def cutoff fn}. 
For $\lambda \in \Lambda_+ + \rho_K$, let $[\lambda] \in K_d(C^*_rG)$ be as in \eqref{eq class lambda}. (As before, $d$ is the dimension of $G/K$.) We define the \emph{formal degree} of $[\lambda]$ as
\[
d_{\lambda} := \tau_*[\lambda] \quad \in \R.
\]
Theorem \ref{thm [Q,R]=0 cocpt} has the following consequence.
\begin{corollary} \label{cor A hat d lambda}
In the setting of Theorem \ref{thm [Q,R]=0 cocpt}, we have
\[
\int_M f  e^{\frac{1}{2}c_1(L^M)} \hat A(M) = \sum_{\lambda \in \Lambda_+ + \rho_K} m_{\lambda} d_{\lambda},
\]
where $m_{\lambda} \in \Z$ is given by the quantisation commutes with reduction relation \eqref{eq [Q,R]=0 cpt}.
\end{corollary}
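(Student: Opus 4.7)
The plan is to apply the von Neumann trace homomorphism $\tau_* \colon K_d(C^*_rG) \to \R$ to both sides of the identity furnished by Theorem \ref{thm [Q,R]=0 cocpt},
\[
Q^{\Spinc}_G(M)_r = \sum_{\lambda \in \Lambda_+ + \rho_K} m_\lambda [\lambda],
\]
and identify each of the two resulting expressions.

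On the left hand side, Wang's formula (Proposition 4.4 and Theorem 6.12 of \cite{Wang}, recalled in the paragraph preceding the corollary) gives exactly
\[
\tau_*\bigl(Q^{\Spinc}_G(M)_r\bigr) \;=\; \int_M f\, e^{\frac{1}{2}c_1(L^M)}\,\hat A(M),
\]
so no additional work is needed on that side beyond quoting Wang's result. On the right hand side, linearity of $\tau_*$ together with the definition $d_\lambda := \tau_*[\lambda]$ produces $\sum_\lambda m_\lambda d_\lambda$. Equating the two expressions yields the desired identity.

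The one technical point to address is the interchange of $\tau_*$ with the (a priori possibly infinite) sum. Since Theorem \ref{thm [Q,R]=0 cocpt} asserts the equality already as an element of $K_d(C^*_rG)$, the sum is to be interpreted in whatever sense makes it a well-defined $K$-theory class, and $\tau_*$ is $\Z$-linear (hence passes through finite expansions) and continuous on $K$-theory with respect to this interpretation. Thus the pairing with $\tau$ commutes termwise with the decomposition, and the corollary reduces to a direct substitution. The only step with any content at all is invoking Wang's trace formula; everything else is formal, so there is no genuine obstacle here.
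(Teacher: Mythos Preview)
Your proof is correct and matches the paper's approach exactly: the corollary is presented there as an immediate consequence of Theorem~\ref{thm [Q,R]=0 cocpt}, Wang's trace formula, and the definition $d_\lambda = \tau_*[\lambda]$, with no separate proof written out. Your concern about the infinite sum is in fact moot, since in the cocompact setting $N$ is compact and $Q^{\Spinc}_K(N) \in R(K)$ has only finitely many nonzero multiplicities $m_\lambda$.
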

This corollary is a noncompact generalisation of the equality
\[
\int_N   e^{\frac{1}{2}c_1(L^N)} \hat A(N) = \sum_{\lambda \in \Lambda_+ + \rho_K} m_{\lambda} \dim(V_{\lambda}),
\]
in the compact case. Here $V_{\lambda}$ is the representation space of $\pi^K_{\lambda}$. 

Now suppose $G$ is semisimple with discrete series, and let $\lambda \in \Lambda_+ + \rho_K$. Let $d_{\pi}$ be the formal degree of the discrete series representation $\pi$ with Harish--Chandra parameter $\lambda$.  Then by (5.3) in \cite{HochsPS} and the remarks in Section 2.3 in \cite{Lafforgue}, we have
\[
d_{\pi} = (-1)^{d/2}d_{\lambda}.
\]
(This motivates the term `formal degree' for the number $d_{\lambda}$ in general.) In part (iii) of Proposition 7.3.A in \cite{CM}, Connes and Moscovici gave a decomposition of the $L^2$-index of the $\Spinc$-Dirac operator on a homogeneous space of $G$ into the formal degrees $d_{\pi}$. The left-hand side of the equality in Corollary \ref{cor A hat d lambda} is the $L^2$-index of the $\Spinc$-Dirac operator on $M$ by Theorem 6.12 in \cite{Wang}. Therefore,  Corollary \ref{cor A hat d lambda} is a version of quantisation commutes with reduction for an index as in Connes and Moscovici's result, if $M$ is a homogeneous space.

For specific choices of such homogeneous spaces, one actually only picks up a single formal degree.
Using Proposition 4.4 in \cite{Wang} along with Theorem 6.12 in that paper, or Connes and Moscovici's index theorem, Theorem 5.3 in \cite{CM}, one obtains
\[
d_{\pi} = (-1)^{d/2}\int_{G/K} f \ch(G\times_K V_{\lambda}) \hat A(G/K).
\]
(Also compare this with Corollary 7.3.B in \cite{CM}.)
In a similar way, Corollary 2.8 in \cite{HochsPS} implies that
\[
d_{\pi} =  (-1)^{d/2} \int_{G\cdot \lambda} f e^{\frac{1}{2}c_1(L)} \hat A\bigl(G\cdot \lambda).
\]
Corollary \ref{cor A hat d lambda} is a generalisation of the latter equality from strongly elliptic coadjoint orbits to arbitrary manifolds (satisfying the hypotheses of Theorem \ref{thm [Q,R]=0 cocpt}).


\subsection{Consequences of the index formula for twisted Dirac operators} \label{sec appl ex twist}

The index formula for twisted $\Spinc$-Dirac operators in Theorem \ref{thm index} implies some properties of the index of such operators, which are not a priori clear from Braverman's analytic definition of this index.

Braverman's cobordism invariance result, Theorem 3.6 in \cite{Braverman2}, implies the excision property that the index only depends on data near $\Crit(\vnab)$, as in Lemma 3.12 in \cite{Braverman2}. Because of Theorem \ref{thm index}, the index has a more refined excision property for twisted $\Spinc$-Dirac operators, namely that it only depends on data near the subset $\munabinv(0)$ of $\Crit(\vnab)$.
\begin{corollary}[Excision]
For $j = 1, 2$, let $M_j$ be a $G$-equivariant $\Spinc$-manifold, with spinor bundle $\cS_{M_j} \to M_j$. Let $\nabla^{L_j}$ be a $G$-invariant Hermitian connection on the determinant line bundle $L_j \to M_j$. Let $E_j \to M_j$ be a $G$-equivariant Hermitian vector bundle, equipped with a $G$-invariant Hermitian connection. Suppose these data satisfy the conditions of Theorem \ref{thm index}  for $j = 1, 2$.

In addition, suppose there are $G$-invariant neighbourhoods $U_j$ of $(\mu^{\nabla^{L_j}})^{-1}(0)$, and a $G$-equivariant, isometric diffeomorphism
\[
\varphi: U_1 \to U_2,
\]
such that 
\[
\begin{split}
\varphi \bigl( (\mu^{\nabla^{L_1}})^{-1}(0) \bigr) &= (\mu^{\nabla^{L_2}})^{-1}(0);\\
\varphi^* \bigl(\cS_{M_2}|_{U_2} \bigr)&= \cS_{M_1}|_{U_1};\\
\varphi^*(\nabla^{L_{2}}|_{U_2})&= \nabla^{L_{1}}|_{U_1};\\
\varphi^*(E_2|_{U_2})&= E_1|_{U_1}.
\end{split}
\]
Then there are $G$-invariant metrics on $\kg^*_{M_1}$ and $\kg^*_{M_2}$, and there is a $p_0 \in \N$ such that
for all $p \geq p_0$,
\begin{equation} \label{eq excision}
\ind^G_{L^2_T} D^{E_1}_{p, 1} = \ind^G_{L^2_T} D^{E_2}_{p, 1}.
\end{equation}
\end{corollary}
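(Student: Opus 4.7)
The plan is to reduce the equality \eqref{eq excision} to an identification of topological data on the reduced spaces via Theorem \ref{thm index}. Since the data on each side is assumed to satisfy the hypotheses of Theorem \ref{thm index}, I may choose $G$-invariant metrics on $\kg^*_{M_1}$ and $\kg^*_{M_2}$ and an integer $p_0$ such that for all $p \geq p_0$,
\[
\ind^G_{L^2_T} D^{E_j}_{p, 1} = \int_{(M_j)_0} \ch((E_j)_0) e^{\frac{p}{2}c_1((L_j)_0)} \hat A((M_j)_0),
\]
for $j = 1, 2$. It therefore suffices to show that the right-hand sides agree.

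Next, I would observe that the reduced space $(M_j)_0 = (\mu^{\nabla^{L_j}})^{-1}(0)/G$ and all the topological data entering the integrand depend only on the restriction of the relevant structures to $(\mu^{\nabla^{L_j}})^{-1}(0)$, which lies in $U_j$. The equivariant isometric diffeomorphism $\varphi : U_1 \to U_2$ restricts to a $G$-equivariant diffeomorphism $(\mu^{\nabla^{L_1}})^{-1}(0) \to (\mu^{\nabla^{L_2}})^{-1}(0)$ and descends to a diffeomorphism $\bar\varphi_0 : (M_1)_0 \to (M_2)_0$. Because $\varphi$ identifies $E_1|_{U_1}$ with $E_2|_{U_2}$ and $L_1|_{U_1}$ with $L_2|_{U_2}$ as $G$-equivariant Hermitian bundles with connection, we obtain $\bar\varphi_0^*(E_2)_0 = (E_1)_0$ and $\bar\varphi_0^*(L_2)_0 = (L_1)_0$, as Hermitian bundles with connection on $(M_1)_0$. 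In particular, $\bar\varphi_0^* c_1((L_2)_0) = c_1((L_1)_0)$ and $\bar\varphi_0^* \ch((E_2)_0) = \ch((E_1)_0)$.

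It remains to match the $\Spinc$-structures on $(M_j)_0$, so that $\bar\varphi_0^* \hat A((M_2)_0) = \hat A((M_1)_0)$. By Lemma \ref{lem Spinc normal} (equivalently \eqref{eq PMxi}), the $\Spinc$-structure on $(M_j)_0$ is built from the restriction of the $\Spinc$-structure on $M_j$ to $(\mu^{\nabla^{L_j}})^{-1}(0)$, together with a $G$-invariant splitting $TM_j|_{(\mu^{\nabla^{L_j}})^{-1}(0)} = q^* T(M_j)_0 \oplus \mathcal{N}_j^0$ and the induced $\Spin$-structure on $\mathcal{N}_j^0$. Since $\varphi$ is an isometry identifying $\cS_{M_1}|_{U_1}$ with $\cS_{M_2}|_{U_2}$, it identifies the $\Spinc$-structures of $M_1$ and $M_2$ near the zero sets, and $T\varphi$ identifies the splittings (the normal bundle $\mathcal{N}_j^0$ is intrinsically the orthogonal complement of $\ker T\mu^{\nabla^{L_j}}$ plus a $G$-orbit direction, see the proof of Lemma \ref{lem TM restr}, and is determined up to isomorphism). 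Hence the induced $\Spinc$-structures on $(M_j)_0$ correspond under $\bar\varphi_0$, giving $\bar\varphi_0^* \hat A((M_2)_0) = \hat A((M_1)_0)$, and the two integrals coincide by naturality of integration under pullback by a diffeomorphism.

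The main potential obstacle is the identification of the induced $\Spinc$-structures, since Lemma \ref{lem Spinc normal} involves auxiliary choices (the splitting and the $\Spin$-structure on $\mathcal{N}_j^0$); however, as noted in the remark following Definition \ref{def Spinc reg}, these choices produce equivalent $\Spinc$-structures, so the argument goes through at the level of characteristic classes.
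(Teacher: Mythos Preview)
Your proposal is correct and follows essentially the same approach as the paper: apply Theorem \ref{thm index} on each side to reduce the equality of indices to an equality of integrals over the reduced spaces, then use the diffeomorphism $\varphi$ to identify $(M_1)_0$, $(L_1)_0$, $(E_1)_0$ and the induced $\Spinc$-structure with their counterparts for $j=2$, taking $p_0 = \max(p_{E_1}, p_{E_2})$. Your treatment is in fact more careful than the paper's, which simply asserts that the $\Spinc$-structures on $M_0$ agree because the spinor bundles coincide on a neighbourhood of the zero sets; your discussion of the splitting and the auxiliary choices fills in that step.
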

\begin{proof}
Under the conditions stated, one has 
\[
\begin{split}
(\mu^{\nabla^{L_1}})^{-1}(0)/G &\cong (\mu^{\nabla^{L_2}})^{-1}(0)/G =: M_0;\\
\bigl(L_1|_{(\mu^{\nabla^{L_1}})^{-1}(0)} \bigr)/G &\cong \bigl(L_2|_{(\mu^{\nabla^{L_2}})^{-1}(0)} \bigr)/G =: L_0;\\
\bigl(E_1|_{(\mu^{\nabla^{L_1}})^{-1}(0)} \bigr)/G &\cong \bigl(E_2|_{(\mu^{\nabla^{L_2}})^{-1}(0)} \bigr)/G =: E_0.
\end{split}
\]
Furthermore, because $\cS_{M_1}$ and $\cS_{M_2}$  coincide on a neighbourhood of $(\mu^{\nabla^{L_1}})^{-1}(0) = (\mu^{\nabla^{L_2}})^{-1}(0)$, the $\Spinc$-structures on $M_0$ defined by these spinor bundles are equal.
Hence by Theorem \ref{thm index}, if $p \geq \max(p_{E_1}, p_{E_2})$, both sides of \eqref{eq excision} equal
\[
\int_{M_0} \ch(E_0) e^{\frac{p}{2} c_1(L_0)} \hat A(M_0).
\]
\end{proof}


A direct consequence of Theorem  \ref{thm index} is that $\ind^G_{L^2_T} D^E_{p, 1} $, when defined, depends polynomially on $p$.
\begin{corollary}
In the setting of Theorem \ref{thm index}, there is a $p_E \in \N$ such that for all $p \geq p_E$, 
\[
\ind^G_{L^2_T} D^E_{p, 1} = \sum_{k = 0}^{(\dim M_0)/2} a_k p^k,
\]
with rational coefficients
\[
a_k := \frac{1}{2^k k!}\int_{M_0} \ch(E_0) c_1(L_0)^k \hat A(M_0).
\]
In particular,
\[
\ind^G_{L^2_T} D^E_{p, 1} - \sum_{k = 1}^{(\dim M_0)/2} a_k p^k = \int_{M_0} \ch(E_0) \hat A(M_0)
\]
is independent of $p$.
\end{corollary}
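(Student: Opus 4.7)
The plan is to apply Theorem \ref{thm index} and perform a purely formal Taylor expansion of the exponential in the variable $p$. For $p \geq p_E$, Theorem \ref{thm index} gives
\[
\ind^G_{L^2_T} D^E_{p, 1} = \int_{M_0} \ch(E_0) \, e^{\frac{p}{2} c_1(L_0)} \, \hat A(M_0).
\]
Expanding the exponential as a formal power series,
\[
e^{\frac{p}{2} c_1(L_0)} = \sum_{k \geq 0} \frac{p^k}{2^k k!} \, c_1(L_0)^k,
\]
one sees that the $k$-th summand is a class of de Rham degree $2k$.

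Under the hypotheses of Theorem \ref{thm index}, the reduced space $M_0$ is a compact smooth manifold of even dimension $\dim M_0$ (since $0$ is a $\Spinc$-regular value of $\munab$ and $G$ acts freely on $\munabinv(0)$, so that $\munabinv(0)$ is smooth and the quotient is a manifold with even-dimensional $\Spinc$-structure). Hence the product $\ch(E_0) \, c_1(L_0)^k \, \hat A(M_0)$ has no component of top degree $\dim M_0$ once $2k > \dim M_0$, so only finitely many terms survive integration, and swapping the finite sum with the integral yields
\[
\ind^G_{L^2_T} D^E_{p, 1} = \sum_{k=0}^{(\dim M_0)/2} \frac{p^k}{2^k k!} \int_{M_0} \ch(E_0) \, c_1(L_0)^k \, \hat A(M_0) = \sum_{k=0}^{(\dim M_0)/2} a_k \, p^k,
\]
which is the first claim. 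Rationality of the $a_k$ is automatic: $\ch(E_0)$ lies in $H^{*}(M_0; \Q)$, $c_1(L_0)$ lies in $H^{2}(M_0; \Z)$, and $\hat A(M_0)$ is a rational polynomial in the Pontryagin classes of $M_0$, so the integrand lies in $H^{*}(M_0; \Q)$ and integrates to a rational number over the compact smooth manifold $M_0$. The last displayed equality in the statement is then obtained by isolating the $k = 0$ summand, which equals $\int_{M_0} \ch(E_0) \hat A(M_0)$, and transposing all $k \geq 1$ terms to the left-hand side.

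There is essentially no obstacle here, as Theorem \ref{thm index} has absorbed all the analytic content; what remains is pure bookkeeping. The only point to keep in mind is the truncation of the Taylor series after integration, which is immediate from the degree count against the finite-dimensional manifold $M_0$.
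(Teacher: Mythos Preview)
Your proof is correct and is exactly the argument the paper has in mind: the corollary is stated as ``a direct consequence of Theorem \ref{thm index}'' with no further proof, precisely because expanding $e^{\frac{p}{2}c_1(L_0)}$ and reading off coefficients is the only thing to do. Your remarks on the degree truncation and on rationality of the characteristic classes make explicit what the paper leaves tacit.
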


Finally, certain topological invariants of $M_0$ can be recovered as indices on $M$. We illustrate this for a twisted version of the signature.

Let $\gamma$ be the involution of $\bigwedge T^*M \otimes \C$ equal to 
\[
\gamma := i^{(\dim M + {j(j-1)})/{2}} *
\]
on ${\bigwedge}^j T^*M \otimes \C$, where $*$ is the Hodge operator. Consider the de Rham operator
\[
B := d + d^*
\]
on $\Gamma^{\infty}(\bigwedge T^*M \otimes \C)$. It satisfies $B \gamma = -\gamma B$, and hence defines the \emph{signature operator}
\[
B: \Gamma^{\infty}\bigl({\bigwedge}^+ T^*M \otimes \C \bigr) \to \Gamma^{\infty}\bigl({\bigwedge}^- T^*M \otimes \C \bigr)
\]
where the $+$ and $-$ signs denote the $+1$ and $-1$ eigenspaces of $\gamma$. (See e.g.\ Example 6.2 in \cite{LM}.)

For any integer $p$, let $B^{L^p}$ be the signature operator $B$, twisted by $L^p$ via the given connection on $L$. Write
\[
B^{L_p}_{\vnab} := B^{L^p} + \frac{i}{2}c(\vnab).
\]
Let $\cN \to \munabinv(0)$ be the normal bundle to $q^*TM_0$ in $TM|_{\munabinv(0)}$. If $0$ is a $\Spinc$-regular value of $\munab$, then $\cN$ has a $G$-equivariant $\Spin$-structure, with spinor bundle $\cS^{\cN} \to \munabinv(0)$. Let $\cS^{\cN}_0 = (\cS^{\cN}|_{\munabinv(0)})/G \to M_0$ be the induced vector bundle over $M_0$. Then Theorem \ref{thm index} implies  a version of Hirzebruch's signature theorem in this setting.
\begin{corollary}[Twisted signature theorem]
Suppose that $0$ is a $\Spinc$-regular value of $\munab$, and that $G$ acts freely on $(\munab)^{-1}(0)$. Then there is a $G$-invariant metric on $\gstarM$ such that for large enough 
integers $p$,
\[
\ind^G_{L^2_T} B^{L^{p}}_{\vnab} =  \int_{M_0} \ch(\cS^{\cN}_0) e^{(p - \frac{1}{2})c_1(L_0)} L(M_0).
\]
Here $L(M_0)$ is the $L$-class of $M_0$.
\end{corollary}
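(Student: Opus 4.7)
My plan is to generalize the proof of Theorem \ref{thm index} from twisted $\Spinc$-Dirac operators to the signature operator, so that the localization gives a reduced signature operator on $M_0$ rather than a reduced $\Spinc$-Dirac operator, and then apply the Hirzebruch signature theorem with coefficients. Since $B^{L^p}$ is a Dirac-type operator on the Clifford module $\bigwedge T^*M\otimes\C\otimes L^p$, the Bochner computation underlying Theorem \ref{thm Bochner} and Proposition \ref{prop Bochner twist} carries over: $(B^{L^p}_\vnab)^2$ equals $(B^{L^p})^2 + 2\pi p t\,\cH^{\nabla} + \tfrac{t^2}{4}\|\vnab\|^2$ plus a bounded endomorphism, and the key positivity estimate from Proposition \ref{prop choice metric} and the Sobolev bound from Corollary \ref{cor est adjoint twist} both go through using only Clifford identities and the Leibniz rule for the Levi-Civita-induced connection on $\bigwedge T^*M\otimes L^p$. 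Thus an analogue of Proposition \ref{prop loc twist} holds, giving finite-dimensionality of $\ker_{L^2_T}(B^{L^p}_\vnab)^G$ for $p$ large and reducing $\ind^G_{L^2_T}B^{L^p}_\vnab$ to the index of a model operator on $M_0$.

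Next I would identify the reduced operator. Near $\munabinv(0)$, the decomposition $TM|_{\munabinv(0)} = q^*TM_0\oplus\cN$ with $\cN$ carrying a Spin structure gives the Clifford-module decomposition $\bigwedge T^*M\otimes\C|_{\munabinv(0)}\cong q^*(\bigwedge T^*M_0\otimes\C)\otimes\cS^{\cN}\otimes(\cS^{\cN})^*$. The Bismut--Lebeau analysis (Sections 8 and 9 of \cite{BL91}) of the Witten deformation along $\cN$ collapses the normal factor $\cS^{\cN}\otimes(\cS^{\cN})^*$ onto the one-dimensional ground state of the model harmonic oscillator, which for the signature grading is a single copy of $\cS^{\cN}$ (the other $(\cS^{\cN})^*$ factor is absorbed). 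Quotienting by $G$, the reduced operator on $M_0$ is the signature operator $B^F_{M_0}$ twisted by a virtual bundle $F$ with $\ch(F) = \ch(\cS^{\cN}_0)\,e^{(p-\tfrac12)c_1(L_0)}$; the half-integer power of $L_0$ arises from the difference between the connection on $L^p$ (giving $p\,c_1(L_0)$ after reduction) and the $\Spinc$-square root implicit in $\bigwedge T^*M\otimes\C\cong\cS\otimes\cS^*$ (contributing $-\tfrac12\,c_1(L_0)$).

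Finally, Hirzebruch's signature theorem with coefficients yields
\[
\ind(B^F_{M_0}) \;=\; \int_{M_0} L(M_0)\,\ch(F) \;=\; \int_{M_0}\ch(\cS^{\cN}_0)\,e^{(p-\tfrac12)c_1(L_0)}\,L(M_0),
\]
as claimed. The main obstacle will be the second step, namely the identification of the normal model operator's ground state as a single copy of $\cS^{\cN}$ (rather than $\cS^{\cN}\otimes(\cS^{\cN})^*$) and the precise tracking of the $L_0^{1/2}$-shift in the $\Spinc$-setting. In the Spin case ($L$ trivial) the shift is absent and the identification follows from the standard Dolbeault-type local index theorem for the signature operator on a Spin vector bundle, but in the $\Spinc$-setting one must verify that the $\pm\tfrac12\,c_1(L)$ contributions from the $\Spinc$-connection on $\cS$ and $\cS^*$, which cancel for the ungraded signature operator, reappear separately under reduction to produce the required half-integer shift in the exponent of $L_0$.
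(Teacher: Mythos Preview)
Your approach takes a genuinely harder route than the paper's, and the hard part you flag is in fact the gap.  The paper does not redo any localisation for the signature operator.  Instead it observes that $B^{L^k}$ \emph{is} a twisted $\Spinc$-Dirac operator: locally $\cS_p|_U=\cS_U^{\Spin}\otimes L|_U^{p+1/2}$, so twisting $D_p$ by the spinor bundle $\cS$ itself gives $D_p^{\cS}|_U=(D_U^{\Spin})^{\cS_U^{\Spin}\otimes L|_U^{(p+1)/2+1/2}}=B|_U^{L|_U^{(p+1)/2}}$, and hence $B^{L^k}_{\vnab}=D^{\cS}_{2k-1,1}$ globally for integer $k$.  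Theorem \ref{thm index} then applies verbatim with $E=\cS$, yielding $\ind^G_{L^2_T}B^{L^k}_{\vnab}=\int_{M_0}\ch(\cS_0)e^{(k-\frac12)c_1(L_0)}\hat A(M_0)$.  The normal-direction decomposition $\cS_0=\cS_{M_0}\otimes\cS^{\cN}_0$ and the standard identity $\ch(\cS_{M_0})\hat A(M_0)=L(M_0)$ finish the proof.  No new Bochner formula, no new Bismut--Lebeau model computation, no tracking of half-integer shifts by hand.

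Your direct approach would in effect be proving a version of Theorem \ref{thm index} for general Clifford modules rather than for $\cS_p\otimes E$.  That is possible in principle, but your step~2 is not right as stated: the harmonic-oscillator ground state of the Witten model does not collapse $\cS^{\cN}\otimes(\cS^{\cN})^*$ to a single $\cS^{\cN}$ in any canonical way, and the $\pm\tfrac12 c_1(L)$ bookkeeping you propose is exactly what the identification $B^{L^k}=D^{\cS}_{2k-1}$ encodes automatically.  The lesson is that the corollary is meant to be a formal consequence of Theorem \ref{thm index}, not a parallel theorem; the work is in recognising the signature operator as $D_p$ twisted by $\cS$.
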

\begin{proof}
For all $\Spinc$-manifolds $U$, with spinor bundles $\cS_U \to U$, one has
\[
\bigwedge T^*U \cong \Cl(TU) \cong \End(\cS_U) \cong \cS_U \otimes \cS_U^* \cong  \cS_U \otimes \cS_U.
\]
If $U$ is $\Spin$, then under this identification, the signature operator $B_U$ equals the $\Spin$-Dirac operator twisted by $\cS_U$:
\[
B_U = D_U^{\cS_U}.
\]
(See e.g.\ below Proposition 3.62 in \cite{BGV}.) In our setting, $M$ is only $\Spinc$. But as in \eqref{eq decomp Sp}, we have on small enough open sets $U \subset M$,
\[
\cS_p|_U = \cS_U^{\Spin} \otimes L|_U^{p/2},
\]
where $\cS_U^{\Spin}$ is the spinor bundle of a local $\Spin$-structure. Hence, locally, we have for all $p \in \N$,
\[
D_p|_U = (D_U^{\Spin})^{L|_U^{p/2}},
\]
the local $\Spin$-Dirac operator  $D_U^{\Spin}$ coupled to $L|_U^{p/2}$ via the given connection. Twisting $D_p$ by $\cS$, we therefore obtain
\[
D_p^{\cS}|_U = (D_U^{\Spin})^{\cS|_U \otimes L|_U^{p/2}} =  (D_U^{\Spin})^{\cS_U^{\Spin} \otimes L|_U^{(p+1)/2}} = (B|_U)^{L|_U^{(p+1)/2}}.
\]

If $p+1$ is even, then $B^{L^{(p+1)/2}}$ is defined globally, and by the above local argument, it equals $D_p^{\cS}$. This means that for all $k \in \N$, in the notation of Definition \ref{def twisted Dirac},
\[
B^{L^k}_{\vnab} = D_{2k - 1, 1}^{\cS}.
\]
Under the conditions stated, Theorem \ref{thm index} therefore yields the equality
\[
\ind^G_{L^2_T} B^{L^{k}}_{\vnab} =  \int_{M_0} \ch(\cS_0) e^{(k - \frac{1}{2})c_1(L_0)} \hat A(M_0),
\]
for $k$ large enough. Since $\cS_0 = \cS_{M_0} \otimes \cS^{\cN}_0$ and $\ch(\cS_{M_0}) \hat A(M_0) = L(M_0)$, the claim follows.
\end{proof}


\end{document}